\newtheorem{lem}{Lemma}
\newtheorem{cor}[lem]{Corollary}
\newtheorem{prop}[lem]{Proposition}
\newtheorem{thm}[lem]{Theorem}
\newtheorem{con}[lem]{Conjecture}
\theoremstyle{definition}
\newtheorem{defin}[lem]{Definition}
\newtheorem{rem}[lem]{Remark}
\newtheorem{ex}[lem]{Example}
\newtheorem{discuss}[lem]{Discussion}
\numberwithin{lem}{section}
\newcommand{\NN}{\mathbb{N}_+}
\newcommand{\Sw}{\mathcal{S}}
\newcommand{\Tw}{\mathcal{T}}
\newcommand{\N}{\mathcal{N}}
\newcommand{\I}[2]{I^{\langle #1,#2\rangle}}
\newcommand{\Var}[2]{\mathrm{Var}^{\langle s,t\rangle}}
\newcommand{\It}[2]{\widetilde{I}^{\langle #1,#2\rangle}}
\renewcommand{\P}[2]{P^{\langle #1,#2\rangle}}
\newcommand{\sno}{\|_{\langle s \rangle}}
\newcommand{\nno}{\|_{\langle n \rangle}}
\newcommand{\sw}{\mathrm{sw}}
\newcommand{\di}{\mathrm{d}}
\def\lt{\mathrm{LT}}
\def\rad{\mathrm{rad}}
\def\Scroll{\mathrm{Scr}}
\title{$s$-Hankel hypermatrices and $2\times 2$ determinantal ideals}
\author{Alessio Sammartano}
\address{Department of Mathematics, Purdue University
150 N. University Street, West Lafayette, IN 47907, USA}
\email{asammart@purdue.edu} 
\date{}
\keywords{$2\times 2$ minor; Hankel; hypermatrix; primary decomposition; switchable set; binomial ideal.}
\subjclass[2010]{Primary: 13C40; Secondary: 13P10.}
\begin{document}

\begin{abstract}
We introduce the concept of $s$-Hankel hypermatrix, which generalizes both Hankel matrices and generic hypermatrices.
We study two determinantal ideals associated to an $s$-Hankel hypermatrix:
the ideal $\I{s}{t}$ generated by certain $2 \times 2$ slice minors, and the ideal $\It{s}{t}$ generated by certain $2 \times 2$ generalized minors.
 We describe the structure of these two ideals, with particular attention to the primary decomposition of $\I{s}{t}$, and provide the explicit list of minimal primes for large values of $s$.
Finally we give some geometrical interpretations and generalise a theorem of J. Watanabe.
\end{abstract}

\maketitle

\section*{Introduction}
The study of determinantal ideals is a central area of research in commutative algebra.
One of the basic results in this theory is due to Eagon and Hochster, 
who proved in \cite{EH} that the ideal generated by the $r\times r$ minors of a generic matrix (i.e., a matrix whose entries are distinct variables of a polynomial ring) is prime.
An analogous result was later proved by J. Watanabe in \cite{Wa} for Hankel matrices.
Recall that an $r_1 \times r_2 $ matrix is called Hankel if the $(a_1, a_2)$-entry   is a variable which only depends on the sum $a_1+a_2$, with $1 \leq a_i \leq r_i$ (cf. \cite{Co}).

This study has been extended to ideals generated by minors of hypermatrices,
 due to the interesting connections with tensors and  projective varieties (cf. \cite{Ber}, \cite{Gr}, \cite{Ha}) and  algebraic statistics (cf. \cite{ST}).
In this context,
the ideals treated so far are mainly generated by $2 \times 2$ minors of a generic hypermatrix.

Motivated by the parallelism between generic matrices and Hankel matrices,
 we introduce in this paper the definition of an $s$-Hankel hypermatrix.
If $M$ is an $r_1 \times r_2 \times \cdots \times r_n$ hypermatrix and $s\leq n$ is a positive integer, we say that
$M$ is $s$-Hankel if the $(a_1, a_2, \ldots, a_n)$-entry is a variable which only depends on the sum $\sum_{i=1}^s a_i$ and the $(n-s)$-tuple $(a_{s+1},\ldots, a_n)$.
 This concept  generalizes  several classical objects:
 for instance, if $s=1$ 
 then
$M$ is a generic hypermatrix, whereas
if $n=s=2$ then $M$ is a Hankel matrix.
When $n=s\geq 3$ we obtain a hypermatrix  whose $a$-entry only depends on the sum $\sum_{i=1}^n a_i$: 
such  objects are sometimes called ``Hankel tensors'' and appear in various areas of mathematics
(e.g.  \cite{BB}, \cite{BBL}, \cite{LT}, \cite{LT2}, \cite{PDV}).
Finally, for $s=n-1$ we obtain a class of hypermatrices related to certain rational normal scrolls described in Section \ref{secGeometry}.
 
Our main task is to study two classes of determinantal ideals associated to this hypermatrix and to this aim
we introduce another positive integer $t \leq n$.
We consider the ideals $\I{s}{t}$ and $\It{s}{t}$ which are generated by specific $2 \times 2$  minors of $M$ described in detail in  Definition \ref{definIdeals}.
The motivation for using $t$,
instead of just taking all minors choosing $t=n$,
 is to consider at once a wider class of ideals
and establish further connections with  existing literature.
 An  example arises from algebraic statistics:
for each value of $t\in [n]$, $\I{1}{t}$  corresponds to a class of
conditional independence statements   (cf.~\cite{AR} for $t=1$,~\cite{ST} for arbitrary $t$).
Another example is $\It{1}{t}$ being equal to the ideal of the Segre embedding $\mathbb{P}(V_1)\times \cdots \times \mathbb{P}(V_t) \times
\mathbb{P}(V_{t+1} \otimes \cdots \otimes V_n) \hookrightarrow \mathbb{P}(V_{1} \otimes \cdots \otimes V_n) $ (cf. \cite{Ha}).
 Besides, 
the parameter $t$ plays a key role in establishing a further connection with projective geometry in   Section~\ref{secGeometry}.

The content of the paper is the following.
We first describe the structure of the ideal $\It{s}{t}$ 
by characterizing in Theorem \ref{thmReductGeneral} the binomials lying in it:
we derive in this way the primeness of $\It{s}{t}$ (cf. Proposition \ref{propPrimeSingle}) and its Hilbert function (cf. Corollary \ref{corHilbert}) extending thus the work of H\`a on the generic case in \cite{Ha}.
The ideal $\I{s}{t}$ is not prime, and we
characterize its minimal primes in Theorem \ref{thmMinimalPrimes}.
In the second part of the paper we focus on ideals with large values of $s$, 
which carry a tamer structure:
we are able to provide the explicit list of generators of the minimal primes
(cf. Theorems \ref{thmMinimalPrimesSN}, \ref{thmMinimalPrimes4}, \ref{thmMinimalPrimes3})
and the geometric interpretation 
(cf. Corollaries \ref{corCurve}, \ref{corScroll});
see also Remark \ref{remLargeValueS} and Conjecture \ref{conjPrimary}.
Finally, Theorem \ref{thmGeneralizationWatanabe} is a broad generalization of \cite[Theorem 1]{Wa} on Hankel matrices.

Most of our methods are inspired by the   paper \cite{ST} mentioned above, which deals with the generic  case.
We use in particular  the concept of $(s,t)$-switchable set generalizing ``$t$-switchable sets'',
and in this way we extend \cite[Theorem 4.13]{ST} to our framework.
However, we need several combinatorial arguments, scattered in proofs and lemmas throughout the paper,
to adapt their methods taking into account the identification among the elements of an $s$-Hankel hypermatrix.
Besides,
we also undertake new investigations, mainly 
the generalization of Watanabe's theorem, the study of the Hilbert function,
and
the combinatorics and geometry of these ideals for $s=n,n-1$.

We note that, according to \cite{Ber}, a hypermatrix $M$ is supersymmetric if it is $n$-Hankel  and 
the sizes of $M$ are  all the same, thus 
 $\It{n}{n}$ belongs to the class of ideals of varieties studied in \cite{Ber} in this special case.
 However, for arbitrary $r_i$ and $s$ we cannot speak of supersymmetric hypermatrices any longer and our ideals are more general.

All the examples in this paper have been worked out by means of \texttt{Macaulay2} \cite{M2} and in particular,
since the ideals we consider are binomial,
by means of the package \texttt{Binomials} \cite{BinM2}.
\section{Setup}\label{secSetup}

Let $\NN$ denote the set of positive integers, and if $r \in \NN$ set $[r]=\{1, \ldots, r\}$.
Given $n,r_1, \ldots, r_n\in\NN$ with $r_i \geq 2$ for each $i \in [n]$, 
define the {\bf set of indices} as
$
\mathcal{N} =~ [r_1] \times~\cdots \times ~ [r_n]
$.
For a fixed  integer $s \in [n]$ we say that 
two indices $a,b \in \mathcal{N}$ 
are {\bf $ \bf s$-equivalent} if 
$ \sum_{i=1}^s a_i = \sum_{i=1}^s b_i$  and $a_i=b_i$ for each $i=s+1, \ldots, n$.
This is indeed an equivalence relation,
and the $s$-equivalence class of an index $a$ is uniquely determined by the sum $\sum_{i=1}^sa_i$ and by the $(n-s)$-tuple $(a_{s+1}, \ldots, a_n)$.
It is  convenient to fix a  representative for each $s$-equivalence class, therefore we give the following definition:
the {\bf normal form} of an index $a \in \mathcal{N}$ 
is  
$
\overline{a}= \max\{a' \in \N : \, a' \text{ is $s$-equivalent to } a\},
$
where the maximum is  with respect to the lexicographic order on $\NN^n$.
An index $a$ is said to be in normal form if $a= \overline a$. 
So for instance if $a=(2,2,1,2)\in \mathcal{N}= [3]\times[3]\times[2]\times[2]$ and $s=2$ then $\overline{a}=(3,1,1,2)$, whereas $(3,2,1,2)$ is in normal form.
The sum of the first $s$ components of an index plays a role in proofs
and therefore we define,
more generally for a vector $a \in \mathbb{Z}^n$, the quantity $\|a\sno=  \sum_{i=1}^s |a_i|.
$

Now we fix the algebraic framework.
Let $\Bbbk$ be an arbitrary field and let $\{x_a:\, a \in \mathcal{N}\}$ be a set of variables indexed in $\mathcal{N}$ with the following identification rule: 
for any $a,b \in \mathcal{N}$ we set $x_a = x_b$ if and only if $a$ and $b$ are $s$-equivalent.
In other words, these variables are in a one-to-one correspondence with  indices in normal form.
We let 
$R= \Bbbk[x_a: \, a \in \mathcal{N}]$
be the   polynomial ring over these variables.
We  order  the variables of $R$ setting 
$
x_a < x_b$ if and only if $ \overline{a}< \overline{b},
$
where we compare $\overline a$ and $\overline b$ with respect to the lexicographic order on $\NN^n$.
We fix the lexicographic monomial order as the monomial order on $R$.

\begin{rem}\label{remDimensionR}
The Krull dimension of the ring $R$ is given by the number of indices in normal form, which equals the number of possible values of $\|a\sno$, times the number of possible values of the $(n-s)$-tuple $(a_{s+1},\ldots,a_n)$.
We obtain
$$ \dim R = \Big(\sum_{i=1}^s r_i-s+1\Big)r_{s+1}\cdots r_{n}.$$
\end{rem}

We are now ready to define the fundamental concept.

\begin{defin}\label{definSHankel}
Let $n,s,r_1, \ldots, r_n \in \NN$ and $R$ be the polynomial ring as above.
The {\bf $\bf s$-Hankel hypermatrix} is the  $r_1 \times \cdots \times r_n$ hypermatrix indexed in $\mathcal{N}$
whose $a$-entry is the variable $x_a$ in $R$,
i.e, the hypermatrix
$
M=(x_a : \, a \in \mathcal{N}).
$
\end{defin}

Now we want to introduce some determinantal ideals associated to $M$.
Unlike the case of matrices, 
different kinds of minors occur in a hypermatrix and we need to introduce more notation.
Let $L \subseteq [n]$, $a, b \in \mathcal{N}$ and define the {\bf switch of $\bf a$ and $\bf b$ with respect to $\bf L$} as the index, 
denoted by  $\sw(L,a,b)$, 
whose $i$th component is
$$
\sw(L,a,b)_i = \begin{cases}
b_i, & \text{if } i \in L; \cr
a_i, & \text{if } i \notin L.\cr
\end{cases}
$$
When $L = \{j\}$
we  just write
$ \sw(j,a,b)$.
The  {\bf Hamming distance} or simply {\bf  distance} of two indices $a,b\in \mathcal{N}$ is defined as $\di(a,b)=\#\{i\in [n] \text{ s.t. } a_i\ne b_i\}$.
Note that $\di(a,b) =
\di(\sw(L,a,b), \sw(L,b,a))$.
Given $a,b \in \N$, $i \in [n]$ and $ L \subseteq [n]$, define
the polynomials
$$
f_{L,a,b}= x_a x_b - x_{\sw(L,a,b)} x_{\sw(L,b,a)}, \qquad
f_{i,a,b}= x_a x_b - x_{\sw(i,a,b)} x_{\sw(i,b,a)}.
$$
It is easy to see that if $\di(a,b)\leq 1$ then $f_{L,a,b}=0$.
A {\bf slice minor} is an element of the form $f_{i,a,b}$ for some $i\in [n]$ and  indices $a,b \in \N$ satisfying $\di(a,b)=2$ and $a_i\ne b_i$.
A {\bf generalized minor} is an element of the form $f_{i,a,b}$ for some $i\in [n]$ and  indices $a,b \in  \N$ with arbitrary distance.
The reason for the choice of names is simple.
A nonzero  slice minor $f_{i,a,b}$ is associated to two indices $a,b$ which differ exactly in two distinct components $i,j \in [n]$:
now  $f_{i,a,b}$ is a $2\times 2$ minor of the matrix obtained from $M$ fixing all the components except  $i,j$, and
such subarray of $M$ is commonly referred to as \emph{slice} of the hypermatrix $M$.
If $\di(a,b)\geq 3$ then the element $f_{i,a,b}$ is not, in general, a minor of a slice of $M$.

We are  ready to define the ideals which are the focus of this study.

\begin{defin}\label{definIdeals}
Let $t 	\in [n]$ and  set
\begin{eqnarray*}
\I{s}{t}& =& \big(f_{i,a,b}: a, b \in \N, \di(a,b) = 2, i \in [t] \big),\\
\It{s}{t} &=&\big(f_{i,a,b}: a, b \in \N, i \in [t] \big).
\end{eqnarray*}
\end{defin}

Thus
$\I{s}{t}$ is the ideal generated by all the slice minors of $M$ such that one of the two non-fixed components is at most $t$,
whereas  $\It{s}{t}$ is generated by all the generalized minors  whose switched  component is at most $t$.

\begin{rem}\label{remtensors}
The ideal $\It{s}{t}$ admits an interpretation in terms of decomposable (i.e. rank one) tensors.
Let $V_i$  be $\Bbbk$-vector spaces of dimension $r_i$ with fixed bases $\mathcal{E}_i$ and consider the tensor product 
$V = V_1 \otimes \cdots \otimes V_n$ with the corresponding basis $\mathcal{E}$.
Grone noted in \cite{Gr} that $\It{1}{n}$ defines a variety in $\mathbb{P}(V)$ which parametrizes all decomposable tensors in $V$. 
More generally, in \cite{ST} $\It{1}{t}$ is viewed as the ideal cutting out all decomposable tensors in flattenings of $V$ of the form $V_i \otimes ( \otimes_{j\ne i} V_j)$ as $i$ varies in $[t]$.

We may say that a tensor $v\in V$ is $s$-Hankel if its components with respect to $\mathcal{E}$ satisfy the same relations as the variables $x_a$;
these tensors determine a linear subspace $H \subseteq V$.
Thus the ideal $\It{s}{n}$ defines a variety in $\mathbb{P}(H) \subseteq \mathbb{P}(V)$ which parametrizes decomposable $s$-Hankel tensors in $V$.
Similarly, the ideal $\It{s}{t}$ defines a variety in $\mathbb{P}(H)$ parametrizing all decomposable $s$-Hankel tensors in flattenings of $V$ of the form $V_i \otimes ( \otimes_{j\ne i} V_j)$ as $i$ varies in $[t]$.

A similar description is provided in \cite{Ber} for a class of symmetric tensors parametrized by so-called Segre-Veronese varieties.
\end{rem}

 It is important to observe that the ideals associated to the $s$-Hankel hypermatrix 
 can also be interpreted as determinantal ideals in the classical sense of matrices. 
 This is obvious for  $\I{s}{t}$, 
 which is generated by the minors of certain slices. 
 For  $\It{s}{t}$ we explain the relationship with matrices in the following discussion.
 This discussion also  plays a role in Section 6. 
 
\begin{discuss}\label{discFlattening}
A generalized minor $f_{i,a,b}$ with any value of $\di(a,b)$ can be seen as a minor of a suitable matrix, constructed as follows.
We rearrange the entries $x_a$ of the hypermatrix $M$ in a matrix $M'$ whose rows are indexed by the $i$-component of $a$  and columns by the remaining $n-1$ components:
the matrix $M'$ is a {\bf flattening} of the hypermatrix $M$ with respect to the component $i$.
Thus $M'$ has $r_i$ rows and $r_1\cdots \widehat{r_{i}}\cdots r_n$ columns
and now   $f_{i,a,b}$  is  the $2\times 2$ minor of $M'$ determined by the indices $a,b$.
We obtain that $\It{s}{t}$ is the sum of the $t$ determinantal ideals of the flattenings of $M$ with respect to each of the first $t$ components.
\end{discuss}

\begin{rem}\label{remTminusOne}
We note that values $t=n$ and $t=n-1$ define the same ideals, i.e.
$\I{s}{n}=\I{s}{n-1}$ and $ \It{s}{n}=\It{s}{n-1}$.
This is easy for the ideal $\I{s}{t}$: 
if $a,b \in \mathcal{N}$ with $\di(a,b)=2$ and $a_i \ne b_i, a_j \ne b_j$, then for example we have $j \leq n-1$ and $f_{i,a,b}=f_{j,a,b}\in \I{s}{n-1}$ proving that 
$\I{s}{n}=\I{s}{n-1}$.
It is easy to see that the following equations hold:
$$
f_{n,a,b}
=
f_{[n-1],a,b}= \sum_{j=1}^{n-1}
f_{j, \sw(\{1, \ldots, {j-1}\},a,b), \sw(\{1, \ldots, {j-1}\},b,a)}
$$
and each summand in the right belongs to $\It{s}{n-1}$. Thus  $\It{s}{n}=\It{s}{n-1}$.
\end{rem}

We conclude this section with a  result that will be useful later.

\begin{lem}\label{lemProductMonomial}
Let $i\in [t]$ and
  $a=c_0, c_1, \ldots, c_k, b \in \mathcal{N}$.
Suppose that for all $j \in [k]$ we have $\di(c_{j-1},c_j)=1$,
with $c_{j-1},c_j$ differing in position $l_j \ne i$.
Suppose also that  $\di(c_k,b) = 2$ and $c_{k,i}\ne b_i$.
Then
$
x_{c_1} x_{c_2} \cdots x_{c_k} f_{i,a,b}
\in \I{s}{t}.
$
\end{lem}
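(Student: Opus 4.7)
The plan is to proceed by induction on $k$. The base case $k=0$ is immediate: then $a=c_k$, so $\di(a,b)=2$ with $a_i\ne b_i$, meaning $f_{i,a,b}$ itself is a slice minor with switched component $i\in[t]$, and the empty product equals $1$.

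For the inductive step, the idea is to ``slide'' the chain one position forward, replacing the generator $f_{i,c_0,b}$ by $f_{i,c_1,b}$ modulo a single slice minor of the ideal. A key preliminary observation is that, since each $l_j\ne i$, all indices $c_0,c_1,\dots,c_k$ share the same $i$-th coordinate, and this common coordinate equals $c_{k,i}\ne b_i$. Set $d=\sw(i,c_0,b)$, $d'=\sw(i,c_1,b)$, and $e=\sw(i,b,c_0)$; since $c_0$ and $c_1$ agree at position $i$, one also has $e=\sw(i,b,c_1)$. A direct expansion of the two relevant $f$'s from their definitions yields the identity
\[
x_{c_1} f_{i,c_0,b} \;-\; x_{c_0} f_{i,c_1,b} \;=\; x_e\bigl(x_{c_0}x_{d'}-x_{c_1}x_d\bigr).
\]
I claim the right-hand side is $x_e f_{i,c_0,d'}$, and that $f_{i,c_0,d'}$ is a slice minor. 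To verify this, one checks that $c_0$ and $d'$ differ in exactly the two positions $i$ (since $d'_i=b_i$) and $l_1$ (inherited from the sole discrepancy between $c_0$ and $c_1$), and that $\sw(i,c_0,d')=d$, $\sw(i,d',c_0)=c_1$.

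Given the identity $x_{c_1} f_{i,c_0,b} = x_{c_0}f_{i,c_1,b} + x_e f_{i,c_0,d'}$, multiplying through by $x_{c_2}\cdots x_{c_k}$ gives
\[
x_{c_1}x_{c_2}\cdots x_{c_k}\,f_{i,a,b} \;=\; x_{c_0}\cdot x_{c_2}\cdots x_{c_k}\,f_{i,c_1,b} \;+\; x_e\,x_{c_2}\cdots x_{c_k}\,f_{i,c_0,d'}.
\]
The second summand lies in $\I{s}{t}$ because $f_{i,c_0,d'}$ is a slice minor with switched component $i\in[t]$. For the first summand, the shorter chain $c_1,c_2,\dots,c_k,b$ satisfies all the hypotheses of the lemma with length $k-1$, so the inductive hypothesis yields $x_{c_2}\cdots x_{c_k}f_{i,c_1,b}\in \I{s}{t}$, and multiplying by $x_{c_0}$ preserves membership.

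The main obstacle is the careful coordinate-by-coordinate bookkeeping required to justify that $f_{i,c_0,d'}$ is indeed a slice minor; this is where the hypothesis $l_j\ne i$ (i.e., that the chain preserves the $i$-th coordinate) is essential, as it both ensures $e=e'$ in the identity and forces $\di(c_0,d')=2$. Once this is in place, the induction runs mechanically.
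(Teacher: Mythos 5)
Your proof is correct. The paper itself defers the argument entirely to a citation (part (4) of Lemma 4.10 of Swanson–Taylor \cite{ST}, observing that the $s=1$ argument there goes through unchanged for arbitrary $s$), so you have supplied a self-contained replacement for that citation. The inductive mechanism you use — shifting the chain by one step via the telescoping identity $x_{c_1} f_{i,c_0,b} = x_{c_0} f_{i,c_1,b} + x_e f_{i,c_0,d'}$, where $f_{i,c_0,d'}$ is a bona fide slice minor because $c_0$ and $d'=\sw(i,c_1,b)$ differ exactly in positions $i$ and $l_1$ — is almost certainly the same one underlying the cited lemma, and the key use of the hypothesis $l_j \ne i$ (so that all the $c_j$ share the same $i$-th coordinate, which makes $\sw(i,b,c_0)=\sw(i,b,c_1)=e$) is exactly where the lemma's assumptions earn their keep. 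All the coordinate checks you indicate do hold: $\sw(i,c_0,d')=d$, $\sw(i,d',c_0)=c_1$, $\di(c_0,d')=2$, and $c_{0,i}\ne d'_i$, so $f_{i,c_0,d'}$ is a nonzero generator of $\I{s}{t}$; the remaining term then closes by the inductive hypothesis applied to the shortened chain $c_1,\dots,c_k,b$.
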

\begin{proof}
The  proof given in  \cite[Lemma 4.10]{ST} for the case  $s=1$ works for any value of $s$.
\end{proof}

\section{$(s,t)$-Switchable sets}\label{secSwitch}

\begin{defin}\label{definSTgood}
Let $s,t \in [n]$.
A subset $\Sw \subseteq \N$ is  {\bf $\bf(s,t)$-switchable}  if the following two properties hold:
\begin{enumerate}
\item for all $a,b \in \Sw$ with $\di(a,b) = 2$ and
$i \in [t]$ we have $\sw(i,a,b) \in \Sw$;
\item if $a \in \Sw$ and  $b \in \N$ is $s$-equivalent to $a$ then $b \in \Sw$.
\end{enumerate}
\end{defin}

It is straightforward that $\emptyset$ and $\N$ are $(s,t)$-switchable sets for any values of $s,t$.
Moreover for each $t\in [n-1]$ an $(s,t+1)$-switchable set is also an $(s,t)$-switchable set and
the converse holds if $t=n-1$, but not in general.
Note that property (1) from Definition \ref{definSTgood} is equivalent to the following condition:
for any $a,b \in \N$
and any distinct $i \in [t]$ and $j \in [n]$ we have
$a, \sw(\{i,j\},a,b) \in \Sw$
if and only if $\sw(i,a,b), \sw(j,a,b) \in \Sw$.

Let $\Sw \subseteq \N$.
Two indices $a, b \in \Sw$ are {\bf connected} in $\Sw$
if there exist $c_0 = a, c_1, \ldots, c_k = b \in \Sw$
such that for all $j \in [k]$ we have
$\di(c_{j-1},c_j)=1$;
the sequence $c_0, \ldots,  c_k$ is called a {\bf path} between $a$ and $b$.
Clearly connectedness is  an equivalence relation on $\Sw$.
The next lemma shows that one of the two equivalence relations defined on $(s,t)$-switchable sets is coarser than the other.

\begin{lem}\label{lemSequivalenceImpliesConnectedness}
Let $\Sw$ be an $(s,t)$-switchable set. If $a,b\in \Sw$ are $s$-equivalent, then they are connected in $\Sw$.
In other words, $s$-equivalence implies connectedness.
\end{lem}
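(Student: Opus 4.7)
My plan is to induct on $\|a-b\sno=\sum_{k=1}^{s}|a_k-b_k|$, which vanishes precisely when $a=b$, since $s$-equivalence forces agreement in positions $s+1,\ldots,n$.

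For the inductive step, assume $a\neq b$. Because $\sum_{k=1}^{s}a_k=\sum_{k=1}^{s}b_k$, I can choose $i,j\in[s]$ with $a_i>b_i$ and $a_j<b_j$. Let $a'\in\N$ be the index obtained from $a$ by decreasing its $i$-th entry by one and increasing its $j$-th entry by one. Then $a'$ is a valid index (since $a_i-1\geq b_i\geq 1$ and $a_j+1\leq b_j\leq r_j$), is $s$-equivalent to $a$ (hence $a'\in\Sw$ by property (2) of Definition \ref{definSTgood}), and satisfies $\|a'-b\sno=\|a-b\sno-2$. By the inductive hypothesis $a'$ is connected to $b$ in $\Sw$, so it suffices to connect $a$ to $a'$ by a path of unit-distance steps.

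Since $\di(a,a')=2$ with differing positions $i$ and $j$, the easy case is $\{i,j\}\cap[t]\neq\emptyset$: property (1) produces $\sw(l,a,a')\in\Sw$ for any $l\in\{i,j\}\cap[t]$, furnishing the length-$2$ path $a\to\sw(l,a,a')\to a'$.

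The main obstacle is the case $\{i,j\}\cap[t]=\emptyset$. Here I exploit that $1\in[t]$ (since $t\geq 1$) to detour through position $1$: I introduce a second auxiliary index $a''\in\N$ that is $s$-equivalent to $a$ and differs from $a$ only in position $1$ and in exactly one of $i,j$. Concretely, if $a_1<r_1$ I take $a''_1=a_1+1$ and $a''_i=a_i-1$, while if $a_1=r_1$ I take $a''_1=a_1-1$ and $a''_j=a_j+1$. The validity of $a''$ in each branch follows from the same inequalities used for $a'$ together with $r_1\geq 2$, and $a''\in\Sw$ by property (2). Both pairs $(a,a'')$ and $(a'',a')$ lie at Hamming distance $2$ with position $1\in[t]$ among their differing coordinates, so two applications of property (1) yield the four-step path $a\to\sw(1,a,a'')\to a''\to\sw(1,a'',a')\to a'$ of unit-distance steps in $\Sw$. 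Induction then completes the proof.
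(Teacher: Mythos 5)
Your proof is correct and follows essentially the same route as the paper: induction on $\|a-b\sno$, with the key idea of using $s$-equivalence to shuttle a swap through position $1 \in [t]$ when the differing coordinates lie outside $[t]$. The only difference is organizational: the paper treats $\delta(a,b)=2$ as a separate base case and picks its auxiliary index so that it is adjacent to both $a$ and $b$ (yielding a length-$2$ path), whereas you connect $a$ to $a'$ inside a uniform inductive step via a length-$4$ detour in the hard subcase; this is cosmetic and does not affect correctness.
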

\begin{proof}
We prove the lemma by induction on the quantity $\delta(a,b)=\|a-b\sno = \sum_{i=1}^s|a_i-b_i|$.
Notice that since $a,b$ are $s$-equivalent then $\sum_{i=1}^sa_i = \sum_{i=1}^sb_i$ and thus $\delta(a,b)$ is a non-negative even integer, and $a$ and $b$ differ either in no components or in at least two.
If $\delta(a,b)=0$ then $a=b$ and they are trivially connected.

Suppose $\delta(a,b)=2$.
Then there are exactly two components $h,k\in [s]$ in which $a,b$ differ, and we must have $b_h=a_h+1$ and $b_k=a_k-1$.
If $1 \in \{ h,k\}$, that is to say, if $a_1 \ne b_1$, 
then  $c=\sw(1,a,b)\in \Sw$ and $a,c,b$ is a path connecting $a$ and $b$ since $\di(a,c)=\di(b,c)=1$.
Assume now that $a_1 = b_1$. 
The idea is to ``move'', via $s$-equivalence,
 the difference between the $h$th or the $k$th components of $a$ and $b$ to the first component, so that we can switch it no matter what $t$ is.
We  distinguish two cases.
\begin{itemize}
\item Case $a_1=1$. Then $a$ is $s$-equivalent to $a'=(2,a_2, \ldots, a_k-1,\ldots, a_n)$. 
Since $\Sw$ is $(s,t)$-switchable we have $a'\in S$ and also $c=\sw(1,a',a)\in \Sw$.
It is immediate to check that $\di(a,c)=\di(b,c)=1$ so that $a,c,b$ is a path between $a$ and $b$.

\item Case $a_1>1$. Then $a$ is $s$-equivalent to $a'=(a_1-1,a_2, \ldots, a_h+1,\ldots, a_n)$. 
Similarly to the previous case, $a'$ and $c=\sw(1,a',a)$ belong to $\Sw$ and $a,c,b$ is a path.
\end{itemize}

Finally, assume $\delta(a,b)>2$.
Then there exist  $h,k\in [s]$ such that $a_h>b_h$ and $a_k<b_k$. 
We have that $a$ is $s$-equivalent to $a'=(a_1, \ldots, a_h-1, 	\ldots, a_k+1, \ldots, a_n)$, and thus $a' \in \Sw$.
It is immediate to check that $\delta(a,a')=2$ and $\delta(b,a')<\delta(b,a)$ and the proof is completed by induction.
\end{proof}

\begin{cor}\label{corEquivalenceClassItselfGood}
  Let $\mathcal{T}$ be a connected component in an $(s,t)$-switchable set $\Sw$.
Then $\mathcal{T}$ is itself an $(s,t)$-switchable set.
\end{cor}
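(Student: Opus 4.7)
The plan is to verify the two defining properties (1) and (2) of an $(s,t)$-switchable set directly for $\mathcal{T}$, using the corresponding properties of the ambient $\Sw$ together with Lemma \ref{lemSequivalenceImpliesConnectedness}. Both verifications should be short; the main conceptual content is already concentrated in that lemma, so no real obstacle remains.

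For property (2), I would take $a \in \mathcal{T}$ and $b \in \N$ with $b$ $s$-equivalent to $a$. Since $a \in \Sw$ and $\Sw$ satisfies (2), we have $b \in \Sw$. By Lemma \ref{lemSequivalenceImpliesConnectedness}, $a$ and $b$ are connected in $\Sw$, hence they lie in the same connectedness equivalence class. Since $\mathcal{T}$ is the class containing $a$, we conclude $b \in \mathcal{T}$.

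For property (1), I would take $a, b \in \mathcal{T}$ with $\di(a,b) = 2$ and $i \in [t]$, and I need $c := \sw(i,a,b) \in \mathcal{T}$. Since $a, b \in \Sw$ and $\Sw$ is $(s,t)$-switchable, $c \in \Sw$. If $a_i = b_i$ then $c = a \in \mathcal{T}$, so we may assume $a_i \neq b_i$, in which case $c$ and $a$ differ only in position $i$, giving $\di(a,c) = 1$. Hence the length-one sequence $a, c$ is a path in $\Sw$, so $c$ is connected to $a$ in $\Sw$, and therefore $c \in \mathcal{T}$.

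The hard part, if any, would be checking that the path exhibited in property (1) actually lies in $\Sw$ (as required by the definition of connectedness), but this is immediate since both endpoints $a$ and $c$ already belong to $\Sw$. With both properties verified, $\mathcal{T}$ is $(s,t)$-switchable.
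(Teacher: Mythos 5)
Your proof is correct and takes essentially the same approach as the paper: property (2) follows from Lemma \ref{lemSequivalenceImpliesConnectedness}, and property (1) follows by observing that the switch $\sw(i,a,b)$ lies in $\Sw$ and is at distance at most $1$ from $a$, hence in the same connectedness class. You spell out a few small points (the degenerate case $a_i=b_i$, the fact that the path must lie in $\Sw$) that the paper leaves implicit, but the argument is the same.
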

\begin{proof}
Let $a,b \in \mathcal{T}$ with $\di(a,b)=2$ and  $i \in [t]$.
 Then $\di(a, \sw(i,a,b)) \leq 1 $ and $\sw(i,a,b)$ is connected to $a$,
so that $\sw(i,a,b)\in \mathcal{T}$.
By Lemma \ref{lemSequivalenceImpliesConnectedness} $\Tw$ is closed under $s$-equivalence.
\end{proof}

Let $\Sw\subseteq \N$ be an $(s,t)$-switchable set.
We define  three ideals of $R$:
\begin{eqnarray*}
\It{s}{t}_\Sw & = & (f_{i,a,b} : i\in [t], a,b\ \text{connected in } \Sw),\\
 \Var{s}{t}_\Sw & = & (x_a : a \not \in \Sw),\\
 \P{s}{t}_\Sw & = & \Var{s}{t}_\Sw + \It{s}{t}_\Sw.
\end{eqnarray*}
Notice that both ideals $\It{s}{t}_\Sw$ and $\P{s}{t}_\Sw$ generalize the ideal 
$\It{s}{t}$ as  $\It{s}{t}=\It{s}{t}_\N=\P{s}{t}_\N$.

\begin{lem}\label{lemPcontainsI}
Let $\Sw$ be an $(s,t)$-switchable subset of $\N$.
The ideal $\P{s}{t}_\Sw$ contains $\I{s}{t}$.
\end{lem}

\begin{proof}
The proof of \cite[Proposition 4.2]{ST} for the case $s=1$ works for any value of $s$.
\end{proof}

The following technical lemma will be needed to prove the main result of Section \ref{secI}.

\begin{lem}\label{lemPathAB}
Let $\Sw$ be an $(s,t)$-switchable set. 
Let $a,b \in \Sw$ be  connected and $i\in [t]$ a component such that $a_i\ne b_i$.
Then there exist elements $a=c_0,c_1, \ldots, c_l\in \Sw$ such that 
$\di(c_l,b) = 2$,
$c_{l,i}\ne b_i$,
$\di(c_{j-1},c_j)=1$ 
and 
$c_{j-1},c_j$ do not differ in component $i$
for all $j \in [l]$.
\end{lem}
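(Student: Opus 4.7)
The strategy is to take any $\Sw$-path from $a$ to $b$ supplied by connectedness, rearrange it via a bubble-sort argument so that every step changing component $i$ is pushed to the end, and then truncate one step short of the last non-$i$-step; the resulting endpoint will be the required $c_l$.

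Concretely, fix a sequence $a = d_0, d_1, \ldots, d_k = b$ in $\Sw$ with $\di(d_{j-1}, d_j) = 1$ for each $j$, and label each step an \emph{$i$-step} or a \emph{non-$i$-step} according to whether the changed coordinate equals $i$. The crucial local move reverses a pair of consecutive steps whose first is an $i$-step and whose second is a non-$i$-step: if step $j$ changes coordinate $i$ and step $j+1$ changes coordinate $l \neq i$, then $d_{j-1}$ and $d_{j+1}$ differ exactly in the two coordinates $i$ and $l$, so $\di(d_{j-1}, d_{j+1}) = 2$ and $d_{j+1} = \sw(\{i,l\}, d_{j-1}, d_{j+1})$. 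The equivalent reformulation of property~(1) recorded just after Definition~\ref{definSTgood}, applied with the distinguished indices $i \in [t]$ and $l \in [n]$, then places both $\sw(i, d_{j-1}, d_{j+1})$ and $\sw(l, d_{j-1}, d_{j+1})$ inside $\Sw$. Replacing $d_j$ by $d'_j := \sw(l, d_{j-1}, d_{j+1})$ therefore yields a new $\Sw$-path of the same length in which steps $j$ and $j+1$ have been interchanged. Iterating finitely many such swaps produces a path $a = e_0, e_1, \ldots, e_k = b$ in $\Sw$ whose first $l^*$ steps are all non-$i$-steps and whose last $k - l^*$ steps are all $i$-steps.

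Setting $c_j := e_j$ for $0 \leq j \leq l := l^* - 1$ then delivers the conclusion. By construction $\di(c_{j-1}, c_j) = 1$ and no step $c_{j-1} \to c_j$ changes coordinate $i$. Since only $i$-steps occur between $e_{l^*}$ and $e_k = b$, the indices $e_{l^*}$ and $b$ agree in every coordinate other than $i$, while $e_{l^*, i} = a_i \neq b_i$ because no $i$-step has yet been taken. Finally, $c_l = e_{l^*-1}$ and $e_{l^*}$ differ in exactly one non-$i$ coordinate $m$, so $c_l$ and $b$ disagree precisely in coordinates $i$ and $m$, giving $\di(c_l, b) = 2$ and $c_{l,i} \neq b_i$.

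The principal technical point is verifying that each local swap remains inside $\Sw$; this is precisely where the hypothesis $i \in [t]$ is essential, entering through property~(1) of an $(s,t)$-switchable set, and mirrors the technique used for the $s=1$ case in \cite{ST}. A subsidiary issue is to guarantee $l^* \geq 1$ before extracting $c_l$, which is automatic once the initial connecting path contains at least one non-$i$-step; this is always the case whenever $a$ and $b$ disagree in some coordinate besides $i$, and otherwise one first enlarges the path by a detour through an $s$-equivalent index supplied by property~(2) combined with Lemma~\ref{lemSequivalenceImpliesConnectedness}.
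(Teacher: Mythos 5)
Your proof is correct in the main case and takes a genuinely different route from the paper's. The paper \emph{projects} the connecting path: for a path $a=e_0,\ldots,e_{l+2}=b$ it sets $c_j := \sw(i,e_j,a)$, overwriting the $i$-th coordinate of every vertex with $a_i$; property (1) of Definition~\ref{definSTgood} applied at distance $\le 2$ shows inductively that each $c_j$ stays in $\Sw$, and a short case analysis at the end repairs the possibility that $\di(c_l,b)$ comes out $3$ rather than $2$. You instead \emph{reorder} the path, bubble-sorting each adjacent pair consisting of an $i$-step followed by a non-$i$-step. Your local swap is a correct application of property (1): $d_{j-1}$ and $d_{j+1}$ are at distance $2$, differ exactly in $i$ and $l$, and both switches $d_j=\sw(i,d_{j-1},d_{j+1})$ and $d'_j=\sw(l,d_{j-1},d_{j+1})$ land in $\Sw$. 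The swap preserves the multiset of step types, keeps the path length fixed, strictly decreases the number of inversions, and so terminates. Your approach trades the paper's terminal case split for the sorting argument; both are legitimate.

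The one place your write-up is not airtight is the boundary case $l^*=0$, i.e.\ $\di(a,b)=1$ with $a,b$ differing only in coordinate $i$. The proposed fix---a detour through an $s$-equivalent index supplied by property (2)---can fail: if the $s$-equivalence class of $a$ is a singleton (which happens whenever $(a_1,\ldots,a_s)$ is either $(1,\ldots,1)$ or $(r_1,\ldots,r_s)$), no such index exists. And indeed the lemma's conclusion is then genuinely false: take $n=3$, $s=2$, $t=3$, $\Sw=\{(1,1,1),(1,1,2)\}$, $a=(1,1,1)$, $b=(1,1,2)$, $i=3$; this $\Sw$ is $(2,3)$-switchable, $a,b$ are connected, $a_i\ne b_i$, yet no element of $\Sw$ has Hamming distance $2$ from $b$. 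The paper's own proof also quietly presupposes a path of length at least $2$. This is harmless in context because the lemma is only invoked (in the proof of Theorem~\ref{thmMinimalPrimes}) for nonzero generators $f_{i,a,b}$, which force $\di(a,b)\ge 2$; the cleanest repair for both your argument and the paper's is to add $\di(a,b)\ge 2$ as an explicit hypothesis, after which your detour clause can simply be deleted.
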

\begin{proof}
By hypothesis there is a path $a=e_0, e_1, \ldots, e_l,e_{l+1}, e_{l+2}=b$ in $\Sw$ connecting $a,b$.
Leaving out the last two indices in the path, 
there are indices $a=e_0, e_1, \ldots, e_l\in \Sw$ such that $\di(e_{j-1},e_j)=1$ for all $j\in [l]$ and $\di(e_l,b)=2$.

Let us consider the indices $c_j=\sw(i,e_j,a)$ for all $j=0,\ldots, l$.
We claim that these indices belong to $\Sw$;
this is trivially true for $j=0$.
If $j>0$ then $c_j=\sw(i,e_j,e_0)=\sw(i,e_j,c_{j-1})$ where $\di(e_j,c_{j-1})=\di(e_j,\sw(i,e_{j-1},e_0))\leq 2$; 
 by induction on $j$ we have
  $c_j \in \Sw$.

Now the $i$-component of $c_j$ is equal to $a_i$ for all $j\in [l]$, and in particular $c_{l,i} \ne b_i$.
We also have $\di(c_{j-1},c_j)\leq 1$ for all $j\in [l]$ and up to pruning redundant elements we may assume $\di(c_{j-1},c_j)= 1$.
All we have to check is the distance $\di(c_l,b)$: if this distance is equal to $2$ then the elements $c_j$ satisfy all the desired properties.
If $\di(c_l,b)\ne 2$,
since $\di(c_l,b)=\di(\sw(i,e_l,a),b)$ and $\di(e_l,b)=2$ then we must have either
$\di(c_l,b)=1$ or $3$ (we are changing only one component in $e_l$).
The case 
$\di(c_l,b)=1$
implies $c_{l,i}=b_i$ which is a contradiction.
If 
$\di(c_l,b)=3$,
then $e_{l,i}=b_i$; in this case we add to the sequence the element $c_{l+1}=\sw(i, e_{l+1},a)$.
Since $e_{l,i}=b_i$, $\di(e_l,b)=2$ and $\di(e_{l},e_{l+1})=\di(e_{l+1},b)=1$ then necessarily $e_{l+1,i}=b_i$.
Since  $\di(e_{l+1},b)=1$ and $e_{l+1,i}=b_i\ne a_i$ then $\di(c_{l+1},b)=2$ and they differ in the $i$-component.
Of course we have $\di(c_l, c_{l+1})=0$ or $1$, but  $\di(c_{l+1},b)\ne \di(c_{l},b)$ implies  
$\di(c_l, c_{l+1})=1$.
The proof is completed.
\end{proof}

\section{Structure of the ideal $\It{s}{t}$}\label{secITilde}

Let $\Sw$ be a connected  $(s,t)$-switchable set.
The main aim of this section is to characterize in Theorem \ref{thmReductGeneral} when two monomials are equivalent modulo ${\It{s}{t}}_\Sw$.
We point out that the proof of this result, which is of fundamental importance in the paper,
is not a simple rewriting of the proof of \cite[Lemma 6.2]{ST} for the generic case.
 We prove consequently that the  set $G_\Sw= \big\{ f_{K,a,b} : \, K \subseteq [t], a,b \in \Sw \big\}$ is a Gr\"obner basis for $\It{s}{t}_\Sw$ and we derive  information on $\It{s}{t}_\Sw$ such as primeness and the Hilbert function.

In this section we use the term {\bf multiset}  to indicate a finite list where elements are counted with multiplicity and  order is irrelevant. 
When we say ``reduction'' we mean in the sense of the Gr\"obner bases.
The symbol $\lt(f)$ denotes the leading term of a polynomial $f \in R$.
Recall that we  fixed the lexicographic order on the monomials of $R$.

 We observe that an element $g \in G_\Sw$ has the form $g=x_{a_1}x_{a_2}-x_{b_1}x_{b_2}$, for 
 some  indices $a_i\in \Sw$ and indices  $b_1=\sw(K,a_1,a_2)$, $b_2=\sw(K,a_2,a_1)$, with $K \subseteq [t]$. 
By definition of switch, 
it is easy to check that these indices $a_i, b_i$ 
satisfy the following properties:
\begin{itemize}
\item $\|a_1\sno + \|a_2\sno = \|b_1\sno + \|b_2\sno $;

\item if $ s \leq t$: the multiset $\{a_{1,i}, a_{2,i}\}$ is the same as the multiset $\{b_{1,i}, b_{2,i}\}$ for each  $i=s+1, \ldots, t,$
 and the multiset
 $\{ (a_{1,t+1},  \ldots, a_{1,n}),
(a_{2,t+1},  \ldots, a_{2,n})\}$
is the same as the multiset 
$\{(b_{1,t+1},  \ldots, b_{1,n}),
(b_{2,t+1},  \ldots, b_{2,n})\}$;

\item if $ s \geq t$: the multiset
 $\{ (a_{1,s+1},  \ldots, a_{1,n}),
(a_{1,s+1},  \ldots, a_{2,n})\}$
is the same as the multiset 
$\{(b_{1,s+1},  \ldots, b_{1,n}),
(b_{2,s+1},  \ldots, b_{2,n})\}$.
\end{itemize}
Note that the last two conditions  are the same when $s=t$.
Generalising a bit,  take now an element of the form $h=\alpha g$, 
where $g\in G_\Sw$ and $\alpha$ is a monomial of $R$.
We observe that $h$ has the form 
 $
\prod_{i=1}^d x_{a_i}- \prod_{i=1}^d x_{b_i}
 $
with indices $a_i,b_i \in \N$
that satisfy the following properties:

 \begin{itemize}
\item[(1)]  $\sum_{i=1}^d \|a_i\sno =\sum_{i=1}^d \|b_i\sno$;

\item[(2a)] if $ s \leq t$:  the multiset $\{a_{1,i},\ldots, a_{d,i}\}$ is the same as the multiset $\{b_{1,i},\ldots, b_{d,i}\}$ for   $i=s+1, \ldots, t,$ and the multiset
 $\{ (a_{1,t+1},  \ldots, a_{1,n}),
 \ldots,$
 $(a_{d,t+1},  \ldots, a_{d,n})\}$
is the same as the multiset 
$\{(b_{1,t+1},  \ldots, b_{1,n}),\ldots,
(b_{d,t+1},  \ldots, b_{d,n})\}$;

\item[(2b)] if $s \geq t$: the multiset
 $\{ (a_{1,s+1},  \ldots, a_{1,n}),
 \ldots,
(a_{d,s+1},  \ldots, a_{d,n})\}$
is the same as 
$\{(b_{1,s+1},  \ldots, b_{1,n}),\ldots,
(b_{d,s+1},  \ldots, b_{d,n})\}$.
\end{itemize}
It follows that every reduction step of a monomial  $\alpha = \prod_{i=1}^d x_{a_i} $ with respect to $G_\Sw$ preserves the quantity of property (1) and the multisets of properties (2a)-(2b) from the list above.
In particular, 
if  a homogeneous binomial $p= \prod_{i=1}^d x_{a_i}- \prod_{i=1}^d x_{b_i}$  reduces to 0 modulo~$G_\Sw$
 then necessarily these quantities and multisets are the same for the first and the second monomial.
We summarize this discussion in the following proposition,
where we focus on binomials $p$ involving only variables $x_a$ with $a\in \Sw$.

\begin{prop}\label{propNecessaryReduct}
Let $\Sw$ be a connected  $(s,t)$-switchable set.
Let $p=\prod_{i=1}^d x_{a_i}- \prod_{i=1}^d x_{b_i}$ with  $a_i, b_i\in \Sw$.
Assume that $p$ reduces to $0$ with respect to the set $G_\Sw$. 
Then the indices $a_i, b_i$ satisfy the property (1) and one of (2a) and (2b) on the list above.
\end{prop}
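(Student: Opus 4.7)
The plan is to assign to each monomial of $R$ a tuple of numerical and multiset invariants and show that these are preserved by every reduction step modulo $G_\Sw$; once that is done, the statement is immediate, because a binomial that reduces to $0$ must have its two monomials carry the same invariants. Given a monomial $M=\prod_{i=1}^d x_{a_i}$, I would consider the integer $\sigma(M)=\sum_{i=1}^d\|a_i\sno$ together with: in the case $s\leq t$, the multisets $\mu_i(M)=\{a_{1,i},\ldots,a_{d,i}\}$ for $s<i\leq t$ and the multiset of tails $\nu(M)=\{(a_{j,t+1},\ldots,a_{j,n})\}_{j=1}^d$; in the case $s\geq t$, just the multiset of tails $\nu(M)=\{(a_{j,s+1},\ldots,a_{j,n})\}_{j=1}^d$. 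These quantities descend to well-defined invariants of $M$ as an element of $R$ because both $\|a\sno$ and the components $a_i$ for $i>s$ are constant on each $s$-equivalence class.

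The key step is to check that for every generator $g=x_{a_1}x_{a_2}-x_{b_1}x_{b_2}\in G_\Sw$, with $b_j=\sw(K,a_j,a_{3-j})$ and $K\subseteq[t]$, the two monomials of $g$ have the same invariants. This is direct inspection: on components in $[s]$ the switch only redistributes entries between $a_1,a_2$, hence conserves $\|a_1\sno+\|a_2\sno$; on components in $(s,t]$ (when $s\leq t$) the switch either swaps or fixes the $i$th entries, so $\{a_{1,i},a_{2,i}\}=\{b_{1,i},b_{2,i}\}$; on components outside $[t]$ (or outside $[s]$ when $s\geq t$) no coordinate is touched, so the multiset of tails is trivially preserved. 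This is essentially the content of the three bullet points preceding the proposition.

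Extending to a reduction step: if $g$ reduces a monomial $M$ divisible by $\lt(g)=x_{a_1}x_{a_2}$, then $M$ is replaced by $M'=(M/\lt(g))x_{b_1}x_{b_2}$, and since the invariants $\sigma,\mu_i,\nu$ are additive under multiplication of variables, their equality for the pair $(x_{a_1}x_{a_2},x_{b_1}x_{b_2})$ propagates to $(M,M')$. To finish, I run the reduction of the binomial $p=M_1-M_2$ with $M_1=\prod x_{a_i}$ and $M_2=\prod x_{b_i}$: since $p$ and every element of $G_\Sw$ are binomials with unit leading coefficient, each intermediate polynomial in the reduction is again a binomial (or zero), and at each step its two current monomials retain the invariants of $M_1$ and $M_2$ respectively. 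When the reduction terminates at $0$, the two monomials coincide, their invariants agree, and this is precisely property (1) together with (2a) if $s\leq t$ or (2b) if $s\geq t$.

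The only place where care is needed is the case analysis in the second step describing how a switch on $K\subseteq[t]$ acts on the three ranges $[s]$, $(s,t]$ and $(t,n]$ (with the analogous split when $s\geq t$); once that bookkeeping is carried out the rest is essentially automatic, and I do not foresee any serious further obstacle.
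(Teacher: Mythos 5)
Your proposal is correct and follows essentially the same approach as the paper, which proves this proposition precisely by observing in the discussion preceding it that each generator of $G_\Sw$ (and hence each element $\alpha g$ with $\alpha$ a monomial and $g\in G_\Sw$) has its two monomials sharing the quantity $\sum\|a_i\sno$ and the relevant multisets, so that reduction steps preserve these invariants and reduction to $0$ forces them to agree. Your added remark that intermediate results of the reduction remain binomials, and the explicit check that the invariants descend to $s$-equivalence classes, are minor clarifications of the same argument.
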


With this notation, our aim is to prove the converse of Proposition \ref{propNecessaryReduct} when $s\geq 2$.

\begin{lem}\label{lemOrderVars}
Let $a,b \in \N.$
If $ \|a\sno > \|b\sno$, then $x_a > x_b$.
\end{lem}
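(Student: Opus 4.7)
The plan is to exploit the explicit form of normal forms. By definition $\overline{a}_i = a_i$ for $i > s$, while $(\overline{a}_1, \ldots, \overline{a}_s)$ is the lexicographically largest $s$-tuple in $[r_1] \times \cdots \times [r_s]$ with sum equal to $N_a := \|a\sno$, and similarly $(\overline{b}_1, \ldots, \overline{b}_s)$ for $N_b := \|b\sno$. Such a lex-maximum tuple admits a standard greedy description: one sets $\overline{a}_1 = \min(r_1,\, N_a - (s-1))$ — so that the remaining $s-1$ coordinates can still realize their minimum possible sum $s-1$ — and then $(\overline{a}_2, \ldots, \overline{a}_s)$ is obtained recursively as the greedy lex-maximum tuple of sum $N_a - \overline{a}_1$ in $[r_2] \times \cdots \times [r_s]$. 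I would first verify this greedy description (routine: any lex-larger first coordinate would make the required residual sum infeasible, and the maximizer then maximizes on the tail).

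Then I would prove by induction on $s$ the following sharper claim: if $N_a > N_b$ and both lex-maxima exist, then $(\overline{a}_1, \ldots, \overline{a}_s) > (\overline{b}_1, \ldots, \overline{b}_s)$ in the lexicographic order on $\NN^s$. The base $s=1$ is immediate since the tuples are $(N_a)$ and $(N_b)$. For the inductive step, the greedy formula gives $\overline{a}_1 \geq \overline{b}_1$ since $N_a - (s-1) > N_b - (s-1)$. If the inequality is strict we are done, so assume $\overline{a}_1 = \overline{b}_1$. If $\overline{b}_1 < r_1$, the greedy rule forces $\overline{b}_1 = N_b - (s-1) < N_a - (s-1) \leq \overline{a}_1$, a contradiction; hence $\overline{a}_1 = \overline{b}_1 = r_1$, and the inductive hypothesis applied to the residual sums $N_a - r_1 > N_b - r_1$ on $[r_2] \times \cdots \times [r_s]$ closes the induction.

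Finally, a strict lexicographic inequality on the first $s$ coordinates of $\overline{a}$ and $\overline{b}$ implies $\overline{a} > \overline{b}$ in the lexicographic order on $\NN^n$, irrespective of the common values $\overline{a}_i = a_i$ and $\overline{b}_i = b_i$ for $i > s$ (agreement throughout positions $1, \ldots, s$ would force $N_a = N_b$, contrary to assumption). By the definition of the variable order this gives $x_a > x_b$. The only mildly delicate point is the case analysis on whether the cap at $r_1$ is active in the greedy rule; once that is handled, the argument is a straightforward induction.
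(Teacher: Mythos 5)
Your proof is correct and takes essentially the same approach as the paper: reduce to comparing the normal forms $\overline{a}$ and $\overline{b}$ and use the lex-max structure of normal forms to conclude that the first differing coordinate favors the one with larger $\|\cdot\sno$. The paper simply asserts this last implication, whereas you supply the full greedy description of normal forms and an induction on $s$ to justify it; this is a welcome elaboration, not a different argument.
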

\begin{proof}
By definition of the order on variables, we have to compare $\overline{a}$ and $\overline{b}$.
Notice that we have $\|\overline{a}\sno=\|a\sno$, $\|\overline{b}\sno=\|b\sno$ and hence $ \|\overline{a} \sno > \| \overline{b}\sno$.
But this implies that the first non-zero component in $\overline a- \overline b$ is positive, hence $\overline a> \overline b$ with respect to the lexicographic order on $\N$, and thus $x_{\overline a}=x_a > x_b =x_{ \overline b}$.
\end{proof}

\begin{lem}\label{lemReduc1}
Let $\Sw$ be a connected  $(s,t)$-switchable set, $a,b \in \Sw$  and  $s \geq 2$. 
If $\big|\|a\sno -\|b\sno \big|\geq 2$ then $x_a x_b$ is not reduced modulo $G_\Sw$.
\end{lem}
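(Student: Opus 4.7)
Without loss of generality assume $A := \|a\sno \geq \|b\sno + 2 =: B + 2$, so that $x_a > x_b$ by Lemma~\ref{lemOrderVars}. The aim is to exhibit an element of $G_\Sw$ whose leading monomial is exactly $x_a x_b$, which immediately shows that $x_a x_b$ is not reduced.

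\textbf{Construction of the witness.} Choose $K = \{1\} \subseteq [t]$ and select representatives $a', b' \in \Sw$ with $a'$ being $s$-equivalent to $a$ and $b'$ being $s$-equivalent to $b$ (so $x_{a'} = x_a$, $x_{b'} = x_b$, and $a', b' \in \Sw$ by part (2) of Definition~\ref{definSTgood}) satisfying $a'_1 - b'_1 = \delta$ for some integer $\delta \in [1, A - B - 1]$; this range is nonempty since $A - B \geq 2$. Setting $c := \sw(1, a', b')$ and $d := \sw(1, b', a')$, one computes $\|c\sno = A - \delta$ and $\|d\sno = B + \delta$, both strictly less than $A$ (the first by $\delta \geq 1$, the second by $\delta \leq A - B - 1$). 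By Lemma~\ref{lemOrderVars} this forces $x_c, x_d < x_a$, so the variable $x_a$ occurs in $x_{a'} x_{b'} = x_a x_b$ but not in $x_c x_d$. Consequently the leading monomial of $f_{1, a', b'} = x_a x_b - x_c x_d \in G_\Sw$ is exactly $x_a x_b$, as required.

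\textbf{Main obstacle.} The one delicate step is producing representatives realizing some admissible difference $\delta$. With $T := r_2 + \cdots + r_s$, the feasible first components form the intervals $a'_1 \in [\max(1, A - T), \min(r_1, A - s + 1)]$ and $b'_1 \in [\max(1, B - T), \min(r_1, B - s + 1)]$, and I must verify that some pair from these two intervals differs by an element of $[1, A - B - 1]$. I expect this to reduce to a short case analysis on these two endpoints, using $A \geq B + 2$, $r_i \geq 2$, and the bound $T \geq s$, which follows from $s \geq 2$. The hypothesis $s \geq 2$ is essential here: it is what guarantees that each $s$-equivalence class has enough representatives to absorb the shift $\delta$ among the components $2, \ldots, s$. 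For $s = 1$ each $s$-equivalence class is a singleton and the construction degenerates, consistent with the lemma's restriction to $s \geq 2$.
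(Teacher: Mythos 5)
Your proposal is correct and follows essentially the same approach as the paper: assume WLOG $\|a\sno \geq \|b\sno + 2$, and the paper produces $s$-equivalent representatives $a',b'\in\Sw$ satisfying $a'_1-b'_1 \geq 1$ and $\sum_{i=2}^s(a'_i-b'_i)\geq 1$, which — since $\sum_{i=2}^s(a'_i-b'_i)=(A-B)-(a'_1-b'_1)$ — is exactly your condition $\delta=a'_1-b'_1 \in [1,A-B-1]$, and concludes via Lemma~\ref{lemOrderVars} that $\lt(f_{1,a',b'})=x_ax_b$, just as you do. The existence of such $a',b'$, which you flag as the delicate step and reduce to a case analysis on feasible first coordinates, is simply asserted in the paper's proof without further justification, so your treatment is if anything a bit more careful on that point.
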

\begin{proof}
Assume without loss of generality that $\|a\sno \geq \|b\sno$.
Then the hypothesis becomes $\|a\sno -\|b\sno \geq 2$,
or in terms of components $\sum_{i=1}^s(a_i-b_i) \geq 2$.
This guarantees the existence of $a',b'\in \N$, that are $s$-equivalent to $a$ and $b$, respectively, such that
\begin{equation}\label{ineq1}\tag{$\star$}
a'_1 -b'_1\geq 1, \qquad \sum_{i=2}^s (a'_i-b'_i)\geq 1.
\end{equation}
Since $a'$ is $s$-equivalent to $a$ we have $a'\in \Sw$ and
similarly $b' \in \Sw$, so that $g=f_{1,a',b'} \in G_\Sw$.
By the inequalities (\ref{ineq1}) we easily have  $\|a'\sno > \|\sw(1,a',b')\sno, \|\sw(1,b',a')\sno$ and  by Lemma \ref{lemOrderVars} it follows $\lt(g)=x_{a'}x_{b'}= x_a x_b$. Thus
$x_a x_b$ 
can be reduced with respect to $g \in G_\Sw$.
\end{proof}

\begin{lem}\label{lemReducHankel}
Let $\Sw$ be a connected $(s,t)$-switchable set and assume $s \geq 2$.  
Consider a binomial
$p = \prod_{i=1}^d x_{a_i}- \prod_{i=1}^d x_{b_i}$ with $a_i,b_i\in \Sw$ such that
\begin{equation}\label{eq1}\tag{$\star \star$}
\sum_{i=1}^d \|a_i\sno= \sum_{i=1}^d \|b_i\sno.
\end{equation}
If  $p$ is reduced modulo $G_\Sw$ then, up to reindexing, it satisfies
the following conditions
\begin{enumerate}
\item
 $\|a_i\sno=\|b_i\sno$ for each $i \in [d]$;

\item there exists $k\in [d]$ such that $\|a_1\sno=\cdots =\|a_k\sno>\|a_{k+1}\sno=\cdots =\|a_d\sno$ and $\|a_1\sno=\|a_{k+1}\sno+1$
(if $k=d$ then all the $\|a_i\sno$ are equal).
\end{enumerate}
\end{lem}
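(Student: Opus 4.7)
The plan is to extract constraints on the sequences $(\|a_i\sno)_{i\in[d]}$ and $(\|b_i\sno)_{i\in[d]}$ from two inputs: each of the two monomials $\prod_{i=1}^d x_{a_i}$ and $\prod_{i=1}^d x_{b_i}$ is individually reduced modulo $G_\Sw$, and their $\|\cdot\sno$-totals agree by hypothesis~(\ref{eq1}).

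For condition~(2), I would apply Lemma~\ref{lemReduc1} pairwise to the factors of $\prod x_{a_i}$: if some pair satisfied $\bigl|\|a_i\sno - \|a_j\sno\bigr|\geq 2$, then $x_{a_i}x_{a_j}$ would already be reducible modulo $G_\Sw$, and hence so would $p$, contradicting the hypothesis. Therefore the values $\|a_i\sno$ take at most two consecutive integer values, and sorting them in decreasing order realises the shape in~(2), with $k$ equal to the number of $a_i$ attaining the larger value (and $k=d$ when all the norms coincide). The same argument forces the $\|b_j\sno$ to lie in some interval $\{m',m'+1\}$.

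For condition~(1), I would parameterise the $a$-side as $\beta$ copies of $m+1$ and $d-\beta$ copies of $m$, with analogous parameters $\delta,m'$ on the $b$-side. The assumption~(\ref{eq1}) becomes $dm+\beta = dm'+\delta$, i.e.\ $d(m-m')=\delta-\beta$. Since $\beta,\delta\in\{0,\ldots,d\}$, the right-hand side has absolute value at most $d$, so $|m-m'|\leq 1$. When $m=m'$ we conclude $\beta=\delta$, and the multisets $\{\|a_i\sno\}$ and $\{\|b_i\sno\}$ coincide directly. When $m=m'+1$ (the opposite case is symmetric), the equation forces $\delta-\beta=d$, hence $\beta=0$ and $\delta=d$: every $\|a_i\sno$ equals $m$ and every $\|b_j\sno$ equals $m'+1=m$, so again both multisets collapse to the single value $m$ with multiplicity $d$. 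A final reindexing of the $b_i$'s to match the sorted $a_i$'s then delivers $\|a_i\sno = \|b_i\sno$ for every $i$.

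The main subtlety is ruling out that the two sides could live on shifted but overlapping intervals $\{m,m+1\}$ and $\{m',m'+1\}$ with $m\neq m'$; this is precisely where the factor of $d$ on the left of the identity $d(m-m')=\delta-\beta$ is essential, since the pigeonhole bound $|\delta-\beta|\leq d$ forces the two intervals to collapse to a common single-value multiset.
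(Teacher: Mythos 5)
Your proposal is correct and follows essentially the same route as the paper: both invoke Lemma~\ref{lemReduc1} pairwise to force each side's $\|\cdot\sno$-values into two consecutive integers, then substitute into (\ref{eq1}) and use a pigeonhole bound on the resulting linear equation to conclude. The paper parameterises by the maximum value (so its count $k$ lies in $[d]$, giving $|k'-k|\leq d-1$ and avoiding your $|m-m'|=1$ subcase), but this is only a cosmetic simplification and your handling of that extra case is sound.
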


\begin{proof}
Replacing the indices $a_i, b_i$ with the normal forms $\overline{a}_i, \overline{b_i}$ does not affect equality (\ref{eq1}) nor changes the variables involved,
therefore we may assume that all the indices are in normal form.
This implies the simple formula $\big| \| a \sno  - \|b\sno \big| = \|a-b\sno$.
Assume that $p$ is  reduced.

If it happens that $\|a_i- a_j\sno\geq 2$ for some $i,j\in [d]$, 
then by Lemma \ref{lemReducHankel} 
 $x_{a_1} x_{a_2} \cdots x_{a_d}$ is not reduced,  contradiction.
Therefore for  all $i,\,j\in [d]$ we have  $\|a_i-a_j\sno=0$ or $1$,
and the same fact holds for the $b_i$.
In particular there may be at most $2$ distinct values of $\|a_i\sno$ 
(otherwise there would be $a_i, a_j$ with $\|a_i-a_j\sno \geq 2$).
We state this more precisely:
up to reordering the $a_i, b_i$,
there are integers $k,k' \in [d]$ such that
\begin{eqnarray*}
\|a_1\sno=\cdots = \|a_k\sno = \|a_{k+1}\sno +1 = \cdots = \|a_d\sno+1, \\
\|b_1\sno=\cdots = \|b_{k'}\sno = \|b_{k'+1}\sno +1= \cdots = \|b_d\sno +1.
\end{eqnarray*}
Of course the condition $\|a_{1}\sno=\|a_{k+1}\sno+1$ plays a role only if $k<d$, and similarly for the $b_i$.
Call for brevity $A=\|a_1\sno$ and $B=\|b_1\sno$.
If we substitute these equalities in equation (\ref{eq1})
we obtain 
$
kA +(d-k)(A-1) = k'B+(d'-k')(B-1), $
and after easy manipulations we get
$ d(A-B)= k' -  k$.
Since $k,k'\in [d]$ we get $|k'-k|\leq d-1$ and the only possibility is $A-B=k'-k=0$,
which implies the conclusion.
\end{proof}

\begin{thm}\label{thmReductGeneral}
Let $\Sw$ be a connected  $(s,t)$-switchable set and assume $s \geq 2$.
A  binomial $p = \prod_{i=1}^d x_{a_i}- \prod_{i=1}^d x_{b_i}$, with $a_i, b_i \in \Sw$,
reduces to $0$ with respect to $G_\Sw$ if and only if the following properties hold:
\begin{itemize}
\item[(1)]  $\sum_{i=1}^d \|a_i\sno =\sum_{i=1}^d \|b_i\sno$;

\item[(2a)] if $ s \leq t$:  the multiset $\{a_{1,i},\ldots, a_{d,i}\}$ is the same as the multiset $\{b_{1,i},\ldots, b_{d,i}\}$ for   $i=s+1, \ldots, t,$ and the multiset
 $\{ (a_{1,t+1},  \ldots, a_{1,n}),
 \ldots,
(a_{d,t+1},  \ldots, a_{d,n})\}$
is the same as the multiset 
$\{(b_{1,t+1},  \ldots, b_{1,n}),\ldots,
(b_{d,t+1},  \ldots, b_{d,n})\}$;

\item[(2b)] if $s\geq t$: the multiset
 $\{ (a_{1, s+1},  \ldots, a_{1,n}),
 \ldots,
(a_{d,s+1},  \ldots, a_{d,n})\}$
is the same as  
$\{(b_{1,s+1},  \ldots, b_{1,n}),\ldots,
(b_{d,s+1},  \ldots, b_{d,n})\}$.
\end{itemize}
\end{thm}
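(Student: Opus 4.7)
The ``only if'' direction is immediate from Proposition~\ref{propNecessaryReduct}. For the converse, assume $p=\prod_{i=1}^d x_{a_i}-\prod_{i=1}^d x_{b_i}$ satisfies (1) and one of (2a), (2b). The plan is to reduce the two monomials $\alpha=\prod x_{a_i}$ and $\beta=\prod x_{b_i}$ to their unique normal forms $\alpha'$ and $\beta'$ modulo $G_\Sw$, and then show $\alpha'=\beta'$; this will force $p$ to reduce to zero.

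The first key observation is that every elementary reduction step via an element $f_{K,c,d}\in G_\Sw$ preserves condition (1) and the multiset data in (2a)/(2b). This is an entry-by-entry verification on the swap $\sw$: the sum of $\|\cdot\sno$-norms is unchanged because $\sw$ only rearranges first-$s$ coordinates between the pair $(c,d)$, and since $K\subseteq[t]$ fixes every coordinate $\geq t+1$ and merely permutes the two values in each coordinate of $K$, the relevant multisets in (2a)/(2b) are preserved. Consequently the normal forms $\alpha'$ and $\beta'$ still satisfy the invariants, and since $\alpha'-\beta'$ is itself reduced, Lemma~\ref{lemReducHankel} applies: after a common reindexing, $\|a_i'\sno=\|b_i'\sno$ for every $i\in[d]$, and these norms take at most two consecutive values $A$ and $A-1$, with matching multiplicities $k$ and $d-k$ on both sides.

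To conclude $\alpha'=\beta'$, take each $a_i'$ and $b_i'$ in normal form, so that every variable is uniquely determined by the pair $(\|a_i'\sno,(a_{i,s+1}',\ldots,a_{i,n}'))$. I would then induct on $d$: the case $d=1$ is immediate, since the invariants force $a_1'$ and $b_1'$ to be $s$-equivalent; for the inductive step, one locates indices $i,j$ with $x_{a_i'}=x_{b_j'}$, cancels them (the remaining multisets still satisfy the invariants), and applies the hypothesis. The main obstacle is the existence of such a matched pair, which amounts to the uniqueness of a reduced monomial compatible with the given invariants. The expected argument exploits reducedness in the lex order: any mismatched pairing of (norm, suffix) data between the two sides should produce a pair $c,d$ of indices and a subset $K\subseteq[t]$ such that the swap $\sw(K,c,d),\sw(K,d,c)$ strictly decreases $x_c x_d$ in lex (via Lemma~\ref{lemOrderVars} together with a direct analysis of how swaps in $[t]$ shift first-$s$ weight across suffixes), contradicting the reducedness of whichever of $\alpha',\beta'$ contains that pair.
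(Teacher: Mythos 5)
Your proposal tracks the paper's strategy closely: both directions, the observation that reduction preserves the invariants, the appeal to Lemma~\ref{lemReducHankel}, and an induction on degree on a reduced binomial. However, the point you label as ``the main obstacle'' --- that a reduced monomial is forced, by the invariants, to match the other side --- is precisely the content of the theorem, and your proposal only gestures at it (``the expected argument exploits reducedness in the lex order \ldots should produce a pair \ldots''). This is where the paper's proof does all the work. The construction of the reducing element is genuinely delicate: in the case $s\geq t$, one cannot simply swap within $[t]$ at the given indices; the paper first replaces $a_j,a_h$ by carefully chosen $s$-equivalent representatives $a'_j,a'_h$ (arranged so that $a'_{j,i}=a'_{h,i}$ for $i=2,\dots,s$ and $a'_{j,1}=a'_{h,1}+1$), which concentrates the entire first-$s$ discrepancy in coordinate $1$ so that a swap at $1\in[t]$ produces a strictly smaller product. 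In the case $s\leq t$ the paper first collapses the last $n-t$ coordinates to reduce to $t=n-1$, then argues that the indices carrying the minimal multiset values must coincide across the two monomials, and splits the binomial accordingly. None of these mechanics are present in your sketch, so the ``if'' direction remains unproved.

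Two smaller issues. First, you invoke ``unique normal forms'' of $\alpha$ and $\beta$ modulo $G_\Sw$, but uniqueness of remainders is equivalent to $G_\Sw$ being a Gr\"obner basis, which is Corollary~\ref{thmGroebner} and is deduced \emph{from} this theorem; as written, that is circular. The fix is harmless (work with an arbitrary reduced form and conclude it must equal the other side), but your phrasing should not presuppose uniqueness. Second, your inductive step (cancel one shared variable and recurse) is a different decomposition from the paper's (split the monomial into the minimal block and the rest, recurse on both halves). The single-cancellation version would work \emph{if} a shared variable could be exhibited, but that existence claim is again exactly the step you left unjustified, so the difference in decomposition does not help: the gap is the same in either formulation.
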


\begin{proof}
The necessary part was proved in Proposition \ref{propNecessaryReduct}.
We may assume that $p$ is already reduced with respect to $G_\Sw$. 
 We proceed by induction on the degree $d$ of the binomial.
If $d = 1$ then
necessarily $p = 0$;
now suppose that $d> 1$.
The cases $s \leq t $ and $s\geq t$ are somehow different, and 
we treat them separately (the two arguments coincide if $s=t$).

Let us deal with the case $s \leq t$.
In the proof of this case we enumerate the set $[r_t+1 ] \times \cdots \times  [r_n ]$ preserving the (lexicographic) order on $(n-t)$-tuples,
so that we can treat the last $ n - t $ components of elements of $\N$ as one component.
In other words, we may assume without loss of generality that $ t = n - 1 $.
Note that,
since an $(s,n)$-switchable set is the same thing as an $(s,n-1)$-switchable set, this also covers
the case $t = n$.
When we write $f_{L,a,b}$ and $L$ is a subset of $[n]$ containing $n=t+1$  we actually mean 
$f_{[t]\setminus L,a,b}$ in the usual sense.
If all the elements in each of the multisets of property (2a) are equal, 
then clearly the two monomials are equal too and $p=0$.
If there are some different elements in a multiset, 
we claim that the  minimal elements in each multiset are those belonging to indices $a_1,\ldots,a_l$ (where $l<d$ is the number of minimal element in the fixed multiset).
In fact,
if we had $a_{j,i} > a_{h,i}$ for some $j\leq l$, $h>l$, and $i \in [n]$, then we could reduce $p$ with respect to $f_{i,a_j,a_k}\in G_\Sw$, 
contradicting the fact that $p$ is reduced. 
But now $a_1, \ldots, a_l$ and $b_1, \ldots, b_l$ satisfy the hypothesis of the theorem, and so do $a_{l+1}, \ldots, a_d$ and $b_{l+1}, \ldots, b_d$ so that the conclusion follows by induction on $d$.

Let us deal with the case $s \geq t$.
We apply Lemma \ref{lemReducHankel} and assume that conditions (1) and (2) from
 that statement  are satisfied.
If $k=d$ in the notation of Lemma \ref{lemReducHankel} then it follows immediately that the two monomials are equal and thus $p=0$.
Assume $k<d$.
If all the elements in the multiset of property (2b)
 are equal, then they are also equal for the  $b_i$, and we have $p=0$.
If they are not all the same, we claim that the $k$ minimal elements   must appear as the last $n-s$ components of the tuples $a_1, \ldots, a_k$, and the same for the $b_i$.
Assume by contradiction that this is not the case.
Then we  have for example for the first monomial
$$(a_{j,s+1}, \ldots, a_{j,n})>(a_{h,s+1},  \ldots, a_{h,n})$$
for some $j \leq k < h$.
But then we may reduce this monomial with respect to the element 
$g=f_{1,a'_j,a'_h} \in G_\Sw$ for suitable choices of $a'_j,a'_h$.
Here the idea is to take $s$-equivalent indices where we ``moved'' the only difference in the first $s$ components to the first component.
Precisely, we choose $a'_j,a'_h\in \N$ such that
\begin{itemize}
\item $a'_j$ and $a'_h$ are $s$-equivalent to $a_j, a_h$ respectively;

\item $a'_{j,i}=a'_{h,i}$ for $i=2, \ldots, s$;

\item $a'_{j,1}=a'_{h,1}+1$.
\end{itemize}
These choices of indices are possible because $\|a_j\sno=\|a_h\sno+1$.
For these choices we easily have
\begin{itemize}
\item $\|a'_j\sno> \|\sw(1,a'_j,a'_k)\sno$ so that $x_{a'_j}> x_{\sw(1,a'_j,a'_k)}$ by Lemma 	\ref{lemOrderVars};

\item $\|a'_j\sno= \|\sw(1,a'_k,a'_j)\sno, (a_{j,s+1}, \ldots, a_{j,n})>(a_{h,s+1},  \ldots, a_{h,n})$ so that  $x_{a'_j}> x_{\sw(1,a'_k,a'_j)}$.
\end{itemize}
It follows that $\lt(g)=x_{a_j}x_{a_k}$ and we can reduce $p$,
 reaching a contradiction to $p$ being reduced.
Thus the $k$ minimal elements must appear as the last $n-s$ components of the tuples $a_1, \ldots, a_k$, and the same for the $b_i$;
but now $a_1, \ldots, a_k$ and $b_1, \ldots, b_k$ satisfy the hypothesis of the theorem, and so do $a_{k+1}, \ldots, a_d$ and $b_{k+1}, \ldots, b_d$ so that the conclusion follows by induction on $d$.
\end{proof}

\begin{cor}\label{thmGroebner}
Let $\Sw$ be an $(s,t)$-switchable set and assume $s\geq 2$.
Then the following set 
is a Gr\"obner basis for $\It{s}{t}_\Sw$:
$$
G_\Sw = \big\{f_{K,a,b} : K \subseteq [t], a, b \text{ connected in } \Sw \big\}.
$$
\end{cor}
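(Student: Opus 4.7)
The plan is to reduce the statement for an arbitrary $(s,t)$-switchable set $\Sw$ to the single-class case already handled in Theorem \ref{thmReductGeneral}, via a decomposition along connectedness classes.

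First I would decompose $\Sw=\bigsqcup_j \Tw_j$ into its equivalence classes for connectedness. By Corollary \ref{corEquivalenceClassItselfGood} each $\Tw_j$ is itself $(s,t)$-switchable, and by Lemma \ref{lemSequivalenceImpliesConnectedness} each $\Tw_j$ is closed under $s$-equivalence. Since the variables of $R$ correspond bijectively to $s$-equivalence classes of indices in normal form, the ``primary'' variable sets $\{x_a : a \in \Tw_j\}$ attached to distinct classes are pairwise disjoint as $j$ varies. Because a pair $(a,b)$ connected in $\Sw$ lies in a single class, the decomposition transfers to the generating sets: $G_\Sw=\bigcup_j G_{\Tw_j}$ and $\It{s}{t}_\Sw=\sum_j \It{s}{t}_{\Tw_j}$.

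Next, I would verify the Gr\"obner basis property of $G_\Sw$ via Buchberger's criterion. For two generators $g_1\in G_{\Tw_j}$ and $g_2\in G_{\Tw_{j'}}$ with $j\neq j'$, their leading monomials are supported in the disjoint primary-variable sets attached to $\Tw_j$ and $\Tw_{j'}$ respectively, hence they are coprime, and the standard coprimality shortcut gives $S(g_1,g_2)\to 0$. For two generators in the same $G_{\Tw_j}$, we are exactly in the single-class setting of Theorem \ref{thmReductGeneral}: if $s\geq 2$, the S-polynomial is a binomial in $\It{s}{t}_{\Tw_j}$ and therefore preserves the invariants (1) and (2a)--(2b) (they are preserved by each generator of $G_{\Tw_j}$), so it reduces to $0$ modulo $G_{\Tw_j}$ by that theorem. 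The remaining case $s=1$ is handled by importing the analogous result from \cite{ST}.

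The main obstacle is the bookkeeping hidden in the first two steps: a generator $f_{K,a,b}\in G_{\Tw_j}$ with $a,b\in \Tw_j$ may involve secondary variables $x_{\sw(K,a,b)}$ and $x_{\sw(K,b,a)}$ whose indices need not lie in $\Tw_j$, or even in $\Sw$. One has to check that such secondary variables do not spoil the coprimality argument for cross-class S-polynomials, which boils down to verifying via Lemma \ref{lemOrderVars}, the normal-form convention, and the choice of lex order that the leading term of $f_{K,a,b}$ is supported on the primary variables of $\Tw_j$. Once this check is in place, the corollary follows directly from Theorem \ref{thmReductGeneral} together with the class decomposition.
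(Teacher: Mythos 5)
Your approach is essentially the paper's: decompose $\Sw$ into connectedness classes, handle each class via Theorem~\ref{thmReductGeneral}, and dispatch cross-class S-polynomials by coprimality of leading terms. However, there is a genuine gap. Buchberger's criterion presupposes that $G_\Sw$ actually generates $\It{s}{t}_\Sw$, and this is not automatic: the ideal is defined by the $f_{\{i\},a,b}$ with $i\in[t]$, while $G_\Sw$ also contains $f_{K,a,b}$ for every $K\subseteq[t]$, so one must show $(G_\Sw)\subseteq\It{s}{t}_\Sw$. Your proposal asserts the decomposition $\It{s}{t}_\Sw=\sum_j\It{s}{t}_{\Tw_j}$ and $G_\Sw=\bigcup_j G_{\Tw_j}$ but never verifies $G_{\Tw_j}\subseteq\It{s}{t}_{\Tw_j}$. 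The paper closes this gap with a telescoping expansion
$f_{K,a,b}=\sum_{j=1}^{l} f_{k_j,\,\sw(\{k_1,\ldots,k_{j-1}\},a,b),\,\sw(\{k_1,\ldots,k_{j-1}\},b,a)}$
and an induction showing all intermediate indices remain in $\Tw_j$; without something like this the corollary does not follow from Theorem~\ref{thmReductGeneral}.

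Your ``main obstacle'' about the secondary variables $x_{\sw(K,a,b)}$, $x_{\sw(K,b,a)}$ is the right thing to worry about, but the resolution is different from what you propose. In fact those indices \emph{do} lie in $\Tw_j$: this is precisely the induction the paper invokes (using the ``iff'' reformulation of property (1) of Definition~\ref{definSTgood} together with Lemma~\ref{lemSequivalenceImpliesConnectedness} to absorb $s$-equivalent representatives). Establishing that fact is unavoidable: your suggested substitute, showing only that the \emph{leading} term of $f_{K,a,b}$ is supported on $\Tw_j$-variables, is not actually cheaper, since the leading term can equal $x_{\sw(K,a,b)}x_{\sw(K,b,a)}$ and you would again need to locate those indices inside $\Tw_j$; moreover, the within-class reduction step still requires every index appearing in the S-polynomial to lie in $\Tw_j$ in order to invoke Theorem~\ref{thmReductGeneral}. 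Once the membership of the switched indices in $\Tw_j$ and the telescoping containment $G_{\Tw_j}\subseteq\It{s}{t}_{\Tw_j}$ are in place, your Buchberger argument goes through and coincides with the paper's.
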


\begin{proof}
We partition $\Sw$ into its connected components
$\Sw = \Sw_1 \cup \cdots \cup \Sw_r $
and by Corollary \ref{corEquivalenceClassItselfGood} each $\Sw_i$ is itself an $(s,t)$-switchable set.
The set $G_\Sw$ may be expressed as the union $ G_{\Sw_1} \cup \cdots \cup G_{\Sw_r}$,
where
$G_{\Sw_i} = \{f_{K,a,b} : K \subseteq [t], a, b \in \Sw_i\}.$
It is clear that each generator of $ \It{s}{t}_{\Sw_i}$  belongs to $G_{\Sw_i}$, so that we have the inclusion $ \It{s}{t}_{\Sw_i} \subseteq (G_{\Sw_i})$.
Conversely, let  $a, b \in \Sw_i$ and $K = \{k_1, \ldots, k_l\} \subseteq [t]$.
Expanding the sum we may check that the element $f_{K,a,b}$ can be obtained as 
$$
f_{K,a,b}= \sum_{j=1}^l
f_{k_j, \sw(\{k_1, \ldots, k_{j-1}\},a,b), \sw(\{k_1, \ldots, k_{j-1}\},b,a)}.
$$
It is easy to see by induction on $j$ that $\sw(\{k_1, \ldots, k_{j-1}\},a,b),$ $ \sw(\{k_1, \ldots, k_{j-1}\},b,a)\in \Sw_i$ and  they are all connected in $\Sw_i$ by assumption.
Each of the summands on the right hand side is thus a generator of $\It{s}{t}_{\Sw_i}$,
so that $G_{\Sw_i} \subseteq \It{s}{t}_{\Sw_i}$.
Therefore we have  $ \It{s}{t}_{\Sw_i} =(G_{\Sw_i})$, and taking the sum of these ideals we conclude that $G_\Sw$ is a system of generators for  $\It{s}{t}_\Sw$.

Now we apply Buchberger's criterion to prove that $G_\Sw$ is a Gr\"obner basis, therefore we consider S-polynomials of pairs of elements $f,g\in G_\Sw$.
If $f \in G_{\Sw_i}$, $g \in G_{\Sw_j}$ with $i \ne j$
then the S-polynomial trivially reduces to $0$
with respect to $G_\Sw$
as the two sets of variables appearing in $f$ and $g$ are disjoint.
If $f, g\in G_{\Sw_i}$ for some $i$ then the S-polynomial is either $0$ or a  binomial
satisfying Theorem \ref{thmReductGeneral} and 
hence it reduces to $0$  with respect to $G_\Sw$.
\end{proof}

Another application  is the following result, 
in which we adopt the convention that an empty product is equal to 1. 

\begin{cor}\label{corHilbert}
Let $H(d)$ denote the Hilbert function of the algebra $ R / \It{s}{t}$.
 If $2 \leq s \leq t$ then
$$
H(d)= \Big(d \sum_{j=1}^s (r_j -1)+1\Big){ d+r_{t+1}\cdots r_{n}-1\choose d} \prod_{i=s+1}^t { d+r_i-1\choose d},
$$
whereas if $s \geq t$, $s \geq 2$ then
$$
H(d)= \Big(d \sum_{j=1}^s (r_j -1)+1\Big){ d+r_{s+1}\cdots r_{n}-1\choose d}.
$$
\end{cor}

\begin{proof}
Write $R/\It{s}{t} = \oplus_{d \geq 0} (R/\It{s}{t})_d$, 
where $(R/\It{s}{t})_d$
is the graded component of degree $d$.
Since this $\Bbbk$-vector space is generated by all the monomials of degree $d$,
in order to compute its dimension 
we only need to count the number of distinct monomials of degree $d$  in $R/{\It{s}{t}}$.
If $s \geq 2$, 
by
Theorems \ref{thmReductGeneral} and Corollary \ref{thmGroebner}  
(applied to the $(s,t)$-switchable set $\N$) 
a monomial $ \prod_{i=1}^d x_{a_i}$ is uniquely determined by 
\begin{itemize}
\item if $s \leq t$: the quantity $\sum_{i=1}^d \|a_i\sno $,  the multisets $\{a_{1,i},\ldots, a_{d,i}\}$ for  $i=s+1, \ldots, t,$ and the multiset
 $\{ (a_{1,t+1},  \ldots, a_{1,n}),
 \ldots,$
$(a_{d,t+1},  \ldots, a_{d,n})\}$;

\item if $s\geq t$: 
the quantity $\sum_{i=1}^d \|a_i\sno $,
the multiset
 $\{ (a_{1, s+1},  \ldots, a_{1,n}),
 \ldots,$
$(a_{d,s+1},  \ldots, a_{d,n})\}$.
 \end{itemize}
Multisets can be counted easily via binomial coefficients: 
a multiset of size $k$ from a set of $n$ elements can be chosen in $ { n+k-1 \choose k}$ different ways.
The number of choices for $\sum_{i=1}^d \|a_i\sno $ is clearly determined by the numbers $r_1, \ldots, r_n$.
Precisely, we have:
\begin{itemize}
\item the  quantity $\sum_{i=1}^d \|a_i\sno $ can be chosen in 
$
\big(\sum_{i=1}^d \sum_{j=1}^s r_j - \sum_{i=1}^d \sum_{j=1}^s 1 \big) +1 =d \sum_{j=1}^s (r_j -1)+1 
$
ways;

\item if $i\in [t]$ the  multiset $\{a_{1,i},\ldots, a_{d,i}\}$ can be chosen in 
$
{ d+r_i-1\choose d} ={ d+r_i-1\choose r_i-1} 
$
ways;

\item the multiset of the final $n-t$ (resp. $n-s$) components can be chosen in 
$
{d+r_{t+1}\cdots r_{n}-1\choose d} = {d+r_{t+1}\cdots r_{n}-1\choose r_{t+1}\cdots r_{n}-1} 
$
ways (resp. $
{d+r_{s+1}\cdots r_{n}-1\choose d} = {d+r_{s+1}\cdots r_{n}-1\choose r_{s+1}\cdots r_{n}-1} 
$
ways).
\end{itemize}
The total number of distinct monomials of degree $d$ is simply obtained by multiplying these expressions,
and we get exactly the  expressions of the statement.
\end{proof}

\begin{rem}
The Hilbert function for $s=1$ was found in \cite[Proposition 1.9]{Ha}. 
Interestingly, 
that expression is not a particular case of the ones found here for higher values of $s$;
in other words, the generic case behaves differently from that of other $s$-Hankel hypermatrices.
\end{rem}

Finally, Theorem \ref{thmReductGeneral} gives the primeness of the ideal $\It{s}{t}$.

\begin{prop}\label{propPrimeSingle}
If $\Sw$ is a connected  $(s,t)$-switchable set,
then $\It{s}{t}_S$ is prime.
\end{prop}
\begin{proof}
Assume by contradiction that $\It{s}{t}_\Sw$ is not prime.
We suppose first that $\Bbbk$ is  algebraically closed.
By  \cite[Theorem 6.1]{ES}
there exists a polynomial $f$ and a binomial $g$ in $R$ such that 
$f,g \notin \It{s}{t}_\Sw$ and $fg\in \It{s}{t}_\Sw$.
Write $g=\alpha -  c \beta $ where $\alpha, \beta$ are monomials and $c\in \Bbbk$, and 
$f = d_1 \gamma_1 + d_2 \gamma_2 + \cdots + d_k \gamma_k$ where $d_i \in \Bbbk$ and $\gamma_i$ are monomials.
Assume $\alpha= \lt(g) $ and $\gamma_1 = \lt(f)$.
Assume that every monomial in $f$ and $g$ is reduced modulo the Gr\"obner basis $G_\Sw$ of $\It{s}{t}_\Sw$.
We are going to use the characterization from Theorem \ref{thmReductGeneral}, therefore we assume $s\geq 2$;
however $\It{1}{t}$ is prime because it is a Segre ideal (cf. \cite{ST}).

Since $ fg \in \It{s}{t}_\Sw$ then it  reduces to $0$ modulo $G_\Sw$.
It follows that its leading term 
$\alpha \gamma_1$ reduces modulo $G_\Sw$ to the same monomial as some other monomial in $fg$.
If it reduces to the same monomial as $\alpha \gamma_i$ for some $i > 1$,
then $ \alpha \gamma_1$ and $\alpha \gamma_i$ satisfy the hypotheses of Theorem  \ref{thmReductGeneral}, hence $ \gamma_1$ and $ \gamma_i$ satisfy the hypotheses of Theorem \ref{thmReductGeneral} so that $\gamma_1$ and $\gamma_i$ reduce to the same monomial contradicting that $f$ was reduced.

Thus $c \ne 0$ and $ \alpha \gamma_1$
reduces to the same monomial as $\beta \gamma_j$,
for some $j\in [k]$.
But $g$ is reduced, so $j\ne 1$.
Set $i_1 = 1$ and $i_2=j$.
Repeating the process,
$\alpha \gamma_{i_2}$ reduces to the same monomial as $\beta \gamma_{i_3}$,
for some $i_3 \not = i_2$,
and in general at the $p$th step
$\alpha \gamma_{i_p}$ reduces to the same monomial as $\beta \gamma_{i_{p+1}}$
with $i_p \not = i_{p+1}$.
We have finitely many monomials in  $f$, whereas 
$p$ can be arbitrarily large:
hence $i_p=i_q$ for some $q < p$. 

Now
 $\alpha^{p-q} \gamma_{i_q} \cdots \gamma_{i_p-1}$ reduces to the same monomial as
$\beta^{p-q} \gamma_{i_q} \cdots \gamma_{i_p-1}$.
It follows that $\alpha^{p-q} \gamma_{i_q} \cdots \gamma_{i_p-1}$ and $\beta^{p-q} \gamma_{i_q} \cdots \gamma_{i_p-1}$  satisfy the hypotheses of Theorem \ref{thmReductGeneral}, so do $\alpha^{p-q}$ and $\beta^{p-q}$,
and so do $\alpha$ and $\beta$,
hence $\alpha $ and $\beta$ reduce to the same monomial yielding a contradiction.

Now let $\Bbbk$ be an arbitrary field,
 denote its algebraic closure with $\overline \Bbbk$ 
 and the corresponding polynomial ring with $\overline R= \overline \Bbbk[x_a :~ \in \N]$.
By the first part of the proof the ideal   $\It{s}{t}_\Sw \overline R$ is prime.
But $\It{s}{t}_\Sw$ is the contraction of $\It{s}{t}_\Sw \overline R$, and since 
the extension
$R \subseteq \overline R$ is  faithfully flat we conclude that $\It{s}{t}_\Sw$ is a prime ideal.
\end{proof}

\section{Structure of the ideal $\I{s}{t}$}\label{secI}

By contrast to the ideal $\It{s}{t}$ investigated in the previous section, 
the structure of $\I{s}{t}$ is much less accessible. 
For instance, finding a Gr\"obner basis or the expression of the Hilbert function for this ideal seems out of our reach.
We are going to study the primary decomposition of $\I{s}{t}$ since  this ideal is not prime if $n \geq 3$.
This is in fact an easy consequence of Lemma \ref{lemProductMonomial}: 
choosing $a=(1,\ldots,1)$, $b=(r_1,\ldots,r_n)$ and $c_i=(1, r_2, \ldots, r_{i+1}, 1, \ldots, 1)$ for $i\in [n-2]$ we have 
$x_{c_1} \cdots x_{c_{n-2}} f_{1,a,b}
\in \I{s}{t}$
while 
$x_{c_1} \cdots x_{c_{n-2}} 
\notin \I{s}{t},  f_{1,a,b}
\notin \I{s}{t}$.

We use the next lemma to establish a  representation of the ideal $\It{s}{t}$ as colon  of~$\I{s}{t}$.

\begin{lem}\label{lemMinorAGreaterB}
Let $a,b\in \N$, $i \in [n]$ and consider the generalized minor $g=f_{i,a,b}$.
There exist  $a',b' \in \N$ such that $g=\pm f_{i,a',b'}$ and  $a'_j \geq b'_j$
for each $j \in [s]$.
\end{lem}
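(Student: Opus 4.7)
The plan is to exploit two elementary symmetries of the generalized minor combined with the flexibility of choosing representatives within each $s$-equivalence class. The two symmetries are (S1)~$f_{i,a,b}=f_{i,b,a}$, which is immediate, and (S2)~$f_{i,\sw(i,a,b),\sw(i,b,a)}=-f_{i,a,b}$, which follows from the straightforward identities $\sw(i,\sw(i,a,b),\sw(i,b,a))=a$ and $\sw(i,\sw(i,b,a),\sw(i,a,b))=b$. Combined with the fact that $f_{i,a,b}$ depends on the four indices only through their $s$-equivalence classes, these give four sign-options for representing $g=\pm f_{i,a',b'}$, and within each option the pair $(a',b')$ may still be freely replaced by any $s$-equivalent indices whose switches remain in the correct $s$-equivalence classes.

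First I would handle the case $i\notin[s]$. Here switching in position $i$ leaves the first $s$ coordinates untouched, so any $a'$ which is $s$-equivalent to $a$ and any $b'$ which is $s$-equivalent to $b$ give $f_{i,a',b'}=f_{i,a,b}$ with no further constraint. Applying (S1) if needed, assume $\|a\sno\ge\|b\sno$; I would then take $a'=\overline{a}$ and $b'=\overline{b}$ to be the normal forms. A short induction on $j\in[s]$ shows that when $\|a\sno\ge\|b\sno$ the normal forms satisfy $\overline{a}_j\ge\overline{b}_j$ coordinatewise: the recursive description of the normal form pushes weight to the front, $\overline{a}_j=\min(\|a\sno-\sum_{k<j}\overline{a}_k-(s-j),\,r_j)$, and the inequality between the residual sums propagates through the monotone $\min(\cdot,r_j)$ operation.

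For $i\in[s]$ the coupling is genuine: preserving the $s$-classes of $\sw(i,a,b)$ and $\sw(i,b,a)$ forces $a'_i-b'_i=a_i-b_i$, and analogous constraints arise in the other three sign-options. Set $E:=\|a\sno-\|b\sno$ and $\epsilon:=a_i-b_i$; the four options prescribe the pair ($s$-sum-difference of $a'-b'$, value of $a'_i-b'_i$) to be $(E,\epsilon)$, $(-E,-\epsilon)$, $(E-2\epsilon,-\epsilon)$, or $(-E+2\epsilon,\epsilon)$. A finite case analysis on the signs of $(E,\epsilon)$ shows that at least one option simultaneously yields non-negative sum-difference and non-negative $i$-coordinate difference, and moreover leaves a non-negative residual $\pm(E-\epsilon)$ available for the coordinates in $[s]\setminus\{i\}$. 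Having selected such an option, I would fix $a'_i, b'_i\in[1,r_i]$ realizing the prescribed $\delta:=a'_i-b'_i\ge 0$, and then fill the remaining coordinates by the normal-form dominance argument from Case 1 applied to the reduced problem on $[s]\setminus\{i\}$.

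The hard part will be the bookkeeping in the second case: verifying in every sign-pattern of $(E,\epsilon)$ that some of the four options is feasible, and that one can choose $a'_i,b'_i\in[1,r_i]$ leaving the reduced sums inside the realizable ranges on $\prod_{j\in[s]\setminus\{i\}}[1,r_j]$ so that the dominance argument kicks in. The standing hypothesis $r_j\ge 2$ provides the necessary slack, and once these verifications are in place the construction is explicit.
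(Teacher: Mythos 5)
Your proposal is correct and is essentially the paper's argument: exploit the two symmetries $f_{i,a,b}=f_{i,b,a}$ and $f_{i,\sw(i,a,b),\sw(i,b,a)}=-f_{i,a,b}$ together with the freedom of representatives within $s$-equivalence classes, split on $i>s$ (pass to normal forms) versus $i\le s$, and in the latter case fix the $i$-th coordinates and distribute the remaining weight so as to get coordinatewise dominance. The paper shortcuts your four-way sign analysis and the realizability bookkeeping in one stroke by assuming $\lt(g)=x_ax_b$ (so that $a$ has the largest $\|\cdot\sno$ among the four indices appearing in $g$, which gives $a_i\ge b_i$ and $\|a\sno-a_i\ge\|b\sno-b_i$ simultaneously) and then simply keeping $a'_i=a_i$, $b'_i=b_i$ --- the latter being the observation that resolves the ``realizable ranges'' concern you correctly flag, rather than the $r_j\ge 2$ hypothesis.
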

\begin{proof}
Without loss of generality we have  $\|a\sno \geq \|b\sno$.
Assume first  $i > s$.
Then we may simply choose $a'=\overline a$, $b'= \overline b$ and 
it is straightforward that $f_{i,a,b}=f_{i,a',b'}$.
Since $a'$, $b'$ are in normal form and 
 $\|a'\sno= \|a\sno \geq\|b\sno= \|b'\sno$, we obtain   $a'_j \geq b'_j$
for each $j \in [s]$.

Suppose now  $i \leq s$.
We may assume without loss of generality   $\lt(g)=x_a x_b$:
if this were not the case, we could consider the minor $f_{i,\sw(i,a,b),\sw(i,b,a)}=-f_{i,a,b}$ which satisfies $ \lt(f_{i,\sw(i,a,b),\sw(i,b,a)})= x_{\sw(i,a,b)} x_{i,\sw(i,b,a)}$.
In particular $a$ has the greatest value of $\|\cdot\sno$ among the four indices involved in $g$, 
so $\|a\sno \geq \|s(i,b,a)\sno=\|b\sno+a_i-b_i$ 
 and hence $\|a\sno-a_i \geq \|b\sno-b_i$.

Set $a'_i=a_i, b'_i=b_i$ and choose numbers $a'_j, b'_j\in [r_j]$, 
with $j \in [s]\setminus\{ i\}$,
such that $b'_j \leq a'_j$ and $\|a'\sno=\| a\sno$, $\|b'\sno=\|b\sno$ 
(this choice is possible  because $\|a\sno-a_i \geq \|b\sno-b_i$).
Set also $a'_j=a_j$ and $b'_j=b_j$ for $j=s+1,\ldots n$. 
We  have $\|\sw(i,b,a)\sno=\|b\sno-b_i+a_i=\|\sw(i,b',a')\sno$ and 
$\|\sw(i,a,b)\sno=\|a\sno -a_i+b_i=\|\sw(i,a',b')\sno$
because we did not change $a_i$, $b_i$ and $\|a\sno, \|b\sno$.
Moreover, $a',b', \sw(i,a',b'), \sw(i,b',a')$ are  $s$-equivalent to $a,b, \sw(i,a,b), \sw(i,b,a)$, respectively.
It follows that $f_{i, a,b}=f_{i,a',b'}$ and we have the desired inequality $a'_j \geq b'_j$ for each $j \in [s]$.
\end{proof}

\begin{prop}\label{propColonIt}
Let $\mathbf{p}= \prod x_a$ denote the product of all the (distinct) variables in $R$. Then
\begin{enumerate}
\item $\It{s}{t}=\I{s}{t}:\mathbf{p} = \I{s}{t}:\mathbf{p}^{\infty}$;
\item $\It{s}{t}$ is the smallest  prime ideal that contains $\I{s}{t}$
and that contains no monomials;
\item $\It{s}{t}$ is a primary component of $\I{s}{t}$.
\end{enumerate}
\end{prop}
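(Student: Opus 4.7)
The plan is to prove (1) first, since (2) and (3) will follow quickly from it combined with the primeness of $\It{s}{t}$ established in Theorem~\ref{propPrimeSingle}. For the inclusion $\It{s}{t} \subseteq \I{s}{t}:\mathbf{p}$, I need to show $\mathbf{p}\cdot f_{i,a,b} \in \I{s}{t}$ for each generating generalized minor. If $f_{i,a,b}=0$ or $\di(a,b) = 2$ (a slice minor already in $\I{s}{t}$) the claim is immediate, so assume $\di(a,b) \geq 3$ and $a_i \neq b_i$. Let $j_1, \ldots, j_{k+1}$ be the positions other than $i$ where $a$ and $b$ differ, and define $c_l$ to be the index obtained from $a$ by replacing $a_{j_1}, \ldots, a_{j_l}$ with $b_{j_1}, \ldots, b_{j_l}$. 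Then $c_0 = a$, $\di(c_{l-1}, c_l) = 1$ in a position different from $i$, $\di(c_k, b) = 2$, and $c_{k,i} = a_i \neq b_i$, so Lemma~\ref{lemProductMonomial} gives $x_{c_1}\cdots x_{c_k} f_{i,a,b} \in \I{s}{t}$; multiplying by the remaining variables yields $\mathbf{p} f_{i,a,b} \in \I{s}{t}$.

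For the reverse inclusion $\I{s}{t}:\mathbf{p}^\infty \subseteq \It{s}{t}$, note that $\I{s}{t} \subseteq \It{s}{t}$ is clear from the definitions, so it suffices to prove $\It{s}{t}:\mathbf{p}^\infty = \It{s}{t}$. Since $\It{s}{t}$ is prime by Theorem~\ref{propPrimeSingle}, this reduces to checking $\mathbf{p} \notin \It{s}{t}$, equivalently that no single variable lies in $\It{s}{t}$. The Gr\"obner basis $G_\N$ of Corollary~\ref{thmGroebner} has only degree-$2$ leading monomials, so no degree-$1$ element reduces to zero; alternatively, the Hilbert function of Corollary~\ref{thmHilbert} in degree~$1$ equals the total number of variables. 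Chaining $\It{s}{t} \subseteq \I{s}{t}:\mathbf{p} \subseteq \I{s}{t}:\mathbf{p}^\infty \subseteq \It{s}{t}$ then completes (1).

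For (2), $\It{s}{t}$ is prime, contains $\I{s}{t}$, and (as just shown) contains no monomial. For minimality, let $P$ be any prime ideal containing $\I{s}{t}$ and no monomial, so in particular $\mathbf{p} \notin P$; given $f \in \It{s}{t}$, part (1) yields some $k$ with $\mathbf{p}^k f \in \I{s}{t} \subseteq P$, and primeness of $P$ forces $f \in P$, whence $\It{s}{t} \subseteq P$. For (3), take a minimal primary decomposition $\I{s}{t} = \bigcap_j Q_j$ with associated primes $P_j$. The standard formula $\I{s}{t}:\mathbf{p}^\infty = \bigcap_{j:\,\mathbf{p} \notin P_j} Q_j$ combined with (1) identifies this intersection as $\It{s}{t}$, which, being prime, is primary for itself. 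Thus $\I{s}{t} = \It{s}{t} \cap \bigcap_{j:\,\mathbf{p} \in P_j} Q_j$ is a primary decomposition featuring $\It{s}{t}$ as a component. The main obstacle is the path construction in (1): the intermediate indices $c_l$ must be chosen so as to avoid coordinate~$i$ and to land at Hamming distance exactly $2$ from $b$, which is exactly what allows Lemma~\ref{lemProductMonomial} to be invoked; everything else in the proof is essentially formal.
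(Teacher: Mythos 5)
Your decomposition of the problem (prove (1), then derive (2) and (3) formally from primeness) matches the paper, and parts (2), (3), and the reverse inclusion in (1) are fine. However, there is a genuine gap in your proof of the forward inclusion $\It{s}{t} \subseteq \I{s}{t} : \mathbf{p}$.

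You construct a path $c_0 = a, c_1, \ldots, c_k$ by switching the coordinates where $a$ and $b$ differ one at a time (avoiding $i$), apply Lemma~\ref{lemProductMonomial} to get $x_{c_1}\cdots x_{c_k} f_{i,a,b} \in \I{s}{t}$, and then claim that ``multiplying by the remaining variables yields $\mathbf{p} f_{i,a,b} \in \I{s}{t}$.'' This last step tacitly assumes that $x_{c_1}\cdots x_{c_k}$ divides $\mathbf{p}$, i.e.\ that it is a squarefree monomial, which means the $c_j$ must lie in pairwise distinct $s$-equivalence classes. But your construction does not guarantee this. If among the positions $j_m \leq s$ where $a,b$ differ, some satisfy $a_{j_m} < b_{j_m}$ and others $a_{j_m} > b_{j_m}$, the running sum $\|c_l\sno$ is not monotone in $l$ and may revisit a previous value while the tail is unchanged, forcing two $c_j$ to be $s$-equivalent. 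Concretely, take $n=s=5$, $\N=[3]^5$, $a=(1,1,3,1,1)$, $b=(3,3,1,3,2)$, $i=5$. Your path gives $c_1=(3,1,3,1,1)$, $c_2=(3,3,3,1,1)$, $c_3=(3,3,1,1,1)$, and since $s=n$ the $s$-class is determined by $\|\cdot\sno$ alone: $\|c_1\sno = \|c_3\sno = 9$, so $x_{c_1} = x_{c_3}$ and $x_{c_1}x_{c_2}x_{c_3} = x_{c_1}^2 x_{c_2}$ does not divide the squarefree monomial $\mathbf{p}$.

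The paper circumvents this precisely by invoking Lemma~\ref{lemMinorAGreaterB} \emph{before} building the path: it replaces $a,b$ (up to sign) by $s$-equivalent indices $a', b'$ with $a'_j \geq b'_j$ for every $j \in [s]$, so that each switch at a position $\leq s$ strictly decreases $\|\cdot\sno$ while each switch at a position $> s$ irreversibly alters the tail $(c_{s+1},\ldots,c_n)$; either way no two $c_j$ can be $s$-equivalent, and the product $x_{c_1}\cdots x_{c_l}$ is indeed a squarefree divisor of $\mathbf{p}$. Your proof needs this normalization step (or an equivalent argument establishing pairwise distinctness of the variables $x_{c_j}$) to go through.
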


\begin{proof}
We prove first that each generator $f_{i,a,b}$ of $ \It{s}{t}$ belongs to $\I{s}{t}:\mathbf{p} $.
If $\di(a,b) \le 1$ or $a_i=b_i$
then $f_{i,a,b} = 0$.
So suppose  $\di(a,b) \ge 2$,  $a_i\ne b_i$  and for example  $\|a\sno \geq \|b\sno$.
By Lemma \ref{lemMinorAGreaterB} we may assume, up to changing the sign of the generator, that $a_j \geq b_j$ for each $ j\in [s]$.

We
construct a path
 $c_0 = a, c_1, \ldots, c_l \in \N$
such that $ \di(c_{j-1},c_j)=1$ for all $j \in [l]$,
$c_{j-1},c_j$ do not differ in component $i$,
$\di(c_l,b) = 2$ 
and  no two $c_j, c_h$ are $s$-equivalent.
In order to do this, let 
$
K=\big\{k_1<k_2<\cdots < k_{l+1}\big\}=\big\{k	\in [n]\setminus\{i\}\, \big|\, a_k\ne b_k\big\},
$
so that $l= \# K -1= \di(a,b)-2$.
Set $c_0=a$ and  $c_j=\sw(\{k_1,\ldots,k_j\},a,b)$.
Essentially we switch the components in which $a$ and $b$ differ one at a time in increasing order, avoiding the component $i$.
By construction, no two indices are $s$-equivalent.
The path satisfies the desired properties and in particular all the variables 
$x_{c_j}$ are distinct.
Now we can apply Lemma \ref{lemProductMonomial} and it follows that
$x_{c_1} \cdots x_{c_l} f_{i,a,b} \in \I{s}{t}$.
Since the $x_{c_j}$ are all distinct,
we get that $\mathbf{p}$ is a multiple of
$x_{c_1} \cdots x_{c_l}$,
and this proves that $ \It{s}{t} \subseteq \I{s}{t}:\mathbf{p} $. 

The inclusion  $\I{s}{t}: \mathbf{p}\subseteq \I{s}{t}: \mathbf{p}^{\infty}$ holds in general.
For the final inclusion, given $g\in \I{s}{t}:\mathbf{p}^m$ with $m \in \NN$, we have that $g\mathbf{p}^m \in \I{s}{t}\subseteq \It{s}{t}$ and, as $\It{s}{t}$ is prime and contains no monomials, 
we get $g \in \It{s}{t}$.
So (1) is proved.

For (2), let $P$ be a  prime ideal that contains $\I{s}{t}$
and that contains no monomials.
By the proof of (1), $ \It{s}{t} = \I{s}{t} : \mathbf{p}
\subseteq P : \mathbf{p} = P$, and assertion
(2) follows.
Assertion (3) follows by (1) and the fact that $\It{s}{t}$ is prime.
\end{proof}

Now we  
extend the primeness result of Proposition \ref{propPrimeSingle} to a more general class of ideals.

\begin{prop}\label{propPrimeGeneral}
Let $\Sw$ be an $(s,t)$-switchable set.
The ideals $\It{s}{t}_{\Sw}$ and $\P{s}{t}_{\Sw}$ are prime.
\end{prop}

\begin{proof}
Let $\Sw = \Sw_1 \cup \cdots \cup \Sw_r$ be the decomposition of $\Sw$ into its connected components;
we have
$\It{s}{t}_{\Sw}= \sum_{i=1}^r \It{s}{t}_{\Sw_i}$.

Denote with $\overline \Bbbk$ the algebraic closure of $\Bbbk$
and  with $\overline R$ the polynomial ring $\overline \Bbbk[x_a : a \in \N]$: the extension
$R \subseteq \overline R$ is  faithfully flat.
Proposition \ref{propPrimeSingle} applies also to $\overline R $ and hence $\It{s}{t}_{\Sw_i} \overline R$ is  a prime ideal for each $i$.
A well-known result states that 
the sum of two prime ideals of a polynomial ring is  prime
if the generators of the two  ideals
involve disjoint sets of variables and the underlying field is algebraically closed field.
It follows that the ideals
$$
\It{s}{t}_{\Sw} \overline R=\sum_{i=1}^r \It{s}{t}_{\Sw_i} \overline R 
\quad \text{and} \quad 
\P{s}{t}_{\Sw} \overline R = \It{s}{t}_{\Sw}  + \Var{s}{t}_\Sw  \overline R
$$ 
are prime.
Since the generators of these two ideals are in $R$
and $R \subseteq \overline R$ is  faithfully flat,
we get that
$\It{s}{t}_{\Sw}$ and $\P{s}{t}_\Sw$ are 
contractions of the prime ideals
$\It{s}{t}_{\Sw}\overline R$ and $\P{s}{t}_\Sw \overline R$
and 
hence  they are prime.
\end{proof}

An $(s,t)$-switchable set  $\mathcal{S}\subseteq \mathcal{N}$ is  {\bf maximal $\bf (s,t)$-switchable} if
for all $(s,t)$-switchable sets $\mathcal{T}$  containing $\Sw$ properly, the ideals
$\P{s}{t}_\Sw$ and $\P{s}{t}_\Tw$ are incomparable.

\begin{thm}\label{thmMinimalPrimes}
The set of minimal primes over $\I{s}{t}$ 
consists of all the ideals of the form $\P{s}{t}_\Sw$, where $\Sw$ is a maximal $(s,t)$-switchable set.
\end{thm}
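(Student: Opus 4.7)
The plan is to exhibit a bijection between minimal primes over $\I{s}{t}$ and maximal $(s,t)$-switchable subsets of $\N$, given in one direction by $P \mapsto \Sw_P := \{a \in \N : x_a \notin P\}$ and in the other by $\Sw \mapsto \P{s}{t}_\Sw$. Both inclusions of the statement will follow once these assignments are shown to be mutually inverse.

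First, $\Sw_P$ is $(s,t)$-switchable whenever $P \supseteq \I{s}{t}$ is prime. Property (2) of Definition \ref{definSTgood} is automatic from the identification $x_a = x_b$ for $s$-equivalent $a, b$. For property (1), given $a, b \in \Sw_P$ with $\di(a,b) = 2$ and $i \in [t]$, the slice minor $f_{i,a,b} = x_a x_b - x_{\sw(i,a,b)} x_{\sw(i,b,a)}$ lies in $\I{s}{t} \subseteq P$, while $x_a x_b \notin P$ by primeness, so the other monomial is also outside $P$ and $\sw(i,a,b) \in \Sw_P$. Next, $\P{s}{t}_{\Sw_P} \subseteq P$: the variable part $\Var{s}{t}_{\Sw_P} \subseteq P$ is tautological, and for any nonzero generator $f_{i,a,b}$ of $\It{s}{t}_{\Sw_P}$ (so $a, b$ are connected in $\Sw_P$ and $a_i \ne b_i$), Lemma \ref{lemPathAB} produces a path $a = c_0, c_1, \ldots, c_l$ in $\Sw_P$ satisfying the hypotheses of Lemma \ref{lemProductMonomial}. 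That lemma gives $x_{c_1} \cdots x_{c_l} f_{i,a,b} \in \I{s}{t} \subseteq P$, and primeness (using $x_{c_j} \notin P$) concludes $f_{i,a,b} \in P$.

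The main step, which I expect to be the most delicate, is the reverse composition: for every $(s,t)$-switchable $\Sw$ one has $\Sw_{\P{s}{t}_\Sw} = \Sw$, equivalently $x_a \in \P{s}{t}_\Sw$ iff $a \notin \Sw$. The nontrivial direction uses homogeneity. By Corollary \ref{thmGroebner}, $\It{s}{t}_\Sw$ is generated by degree-two binomials, so its degree-one component is trivial; hence the degree-one part of $\P{s}{t}_\Sw$ is the $\Bbbk$-span of $\{x_b : b \notin \Sw\}$. If $x_a \in \P{s}{t}_\Sw$, its degree-one summand $x_a$ itself must lie in that span, forcing $x_a = x_b$ for some $b \notin \Sw$. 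Then $a$ is $s$-equivalent to $b$, and property (2) of switchability gives $a \notin \Sw$.

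With these two facts the theorem is essentially formal. If $\Sw$ is maximal $(s,t)$-switchable and $Q$ is a prime with $\I{s}{t} \subseteq Q \subsetneq \P{s}{t}_\Sw$, then $\P{s}{t}_{\Sw_Q} \subseteq Q \subsetneq \P{s}{t}_\Sw$, and comparing degree-one parts via the observation yields $\Sw \subseteq \Sw_Q$. Equality $\Sw = \Sw_Q$ would give $\P{s}{t}_\Sw = \P{s}{t}_{\Sw_Q} \subsetneq \P{s}{t}_\Sw$, a contradiction; strict containment $\Sw \subsetneq \Sw_Q$ contradicts maximality of $\Sw$ since $\P{s}{t}_{\Sw_Q} \subsetneq \P{s}{t}_\Sw$ is a comparison. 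Conversely, for a minimal prime $P$, the inclusion $\P{s}{t}_{\Sw_P} \subseteq P$ combined with minimality and the primeness of $\P{s}{t}_{\Sw_P}$ (Theorem \ref{thmPrimeGeneral}) yields $P = \P{s}{t}_{\Sw_P}$. To see that $\Sw_P$ is maximal, let $\Tw \supsetneq \Sw_P$ be $(s,t)$-switchable. If $\P{s}{t}_\Tw \subseteq P$ then minimality forces $\P{s}{t}_\Tw = P$, and the observation gives $\Tw = \Sw_{\P{s}{t}_\Tw} = \Sw_P$, a contradiction; if instead $P \subseteq \P{s}{t}_\Tw$, the observation gives $\Tw = \Sw_{\P{s}{t}_\Tw} \subseteq \Sw_P$, again a contradiction. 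Hence $\P{s}{t}_\Tw$ and $P$ are incomparable for every such $\Tw$, and $\Sw_P$ is maximal.
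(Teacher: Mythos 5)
Your proof is correct and follows essentially the same route as the paper: you construct $\Sw_P=\{a : x_a\notin P\}$ for a minimal prime $P$, verify it is $(s,t)$-switchable, use Lemmas \ref{lemPathAB} and \ref{lemProductMonomial} to get $\P{s}{t}_{\Sw_P}\subseteq P$, and invoke Theorem \ref{thmPrimeGeneral} together with minimality, exactly as the paper does. The one cosmetic improvement is that you make explicit the degree-one comparison (that $(\P{s}{t}_\Sw)_1$ is spanned by $\{x_b : b\notin\Sw\}$, since $\It{s}{t}_\Sw$ is generated in degree two), a step the paper uses implicitly when it passes from $\P{s}{t}_\Tw\subseteq\P{s}{t}_\Sw$ to $\Var{s}{t}_\Tw\subseteq\Var{s}{t}_\Sw$.
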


\begin{proof}
We have already seen in Proposition \ref{propColonIt} that  $\It{s}{t}$ is a minimal prime of $\I{s}{t}$.
Notice that $\It{s}{t}$  is of the form of the statement since $\It{s}{t} = \P{s}{t}_\N$,
and $\N$ is trivially a maximal $(s,t)$-switchable set.

Let $P$ be an arbitrary prime ideal minimal over $\I{s}{t}$.
We assume that $P \ne \It{s}{t}$ and thus it contains some variables, otherwise $\It{s}{t}\subseteq P$ by Proposition \ref{propColonIt} (2).
Define the set
$
\Sw=\{a \in \N : x_a \not \in P\}.
$
We have $\Sw \ne \emptyset$, otherwise $P$ is the maximal homogeneous ideal which properly contains $\It{s}{t}$.
We prove now that $\Sw$ is $(s,t)$-switchable.

Given $a, b \in \Sw$ such that $\di(a,b) = 2$ and $a_i\neq b_i$ for some $i\in[t]$, we prove that $\sw(i,b,a), \sw(i,a,b) \in \Sw$.
Since $P$ contains $\I{s}{t}$ and $i\in[t]$, $P$ contains the slice minor $f_{i,a,b}  = x_a
x_b - x_{\sw(i,b,a)} x_{\sw(i,a,b)}$.
Since $a, b \in \Sw$,
then $x_a x_b \not \in P$,
so that 
$x_{\sw(i,b,a)} x_{\sw(i,a,b)} \not \in P$,
which implies $x_{\sw(i,b,a)}, x_{\sw(i,a,b)} \not \in P$,
and hence
$\sw(i,b,a), \sw(i,a,b) \in \Sw$.
Thus $\Sw$ satisfies property (1) in Definition \ref{definSTgood}.
By definition of $\Sw$  it is straightforward that $\Sw$ is 
closed under $s$-equivalence, proving that $\Sw$ is $(s,t)$-switchable.
Now $\P{s}{t}_\Sw$ is a prime ideal containing $\I{s}{t}$ by  Lemma \ref{lemPcontainsI}.

Let us prove that $\P{s}{t}_\Sw \subseteq P$.
By definition of $\Sw$ we have $\Var{s}{t}_\Sw \subseteq P$.
Let $f_{i,a,b} \in \It{s}{t}_\Sw$,
with $i\in[t]$ and $a$ and $b$ connected in $\Sw$:
by Lemma \ref{lemPathAB}
there exist elements $c_0=a, \ldots, c_l\in \Sw$ such that 
$\di(c_l,b) = 2$,
$\di(c_{j-1},c_j)=1$ 
and 
$c_{j-1},c_j$ do not differ in component $i$
for all $j \in [l]$.
Then we can apply Lemma \ref{lemProductMonomial} and we get
$x_{c_1} \cdots x_{c_l} f_{i,a,b} \in \It{s}{t}_\Sw \subseteq P$
and, since $x_{c_j} \not \in P$ for all $j \in [l]$,
it follows that $f_{i,a,b} \in P$.
Thus $\I{s}{t} \subseteq \P{s}{t}_\Sw \subseteq P$ and by minimality  $\P{s}{t}_\Sw = P$.

Finally,
let $\Tw$ be an $(s,t)$-switchable set properly containing $\Sw$; then $\Var{s}{t}_\Tw \subsetneq \Var{s}{t}_\Sw$.
By  Lemma \ref{lemPcontainsI}
$\P{s}{t}_\Tw$ contains $\I{s}{t}$.
This, combined with the fact that $P = \P{s}{t}_\Sw$ is minimal over $\I{s}{t}$,
implies that $\It{s}{t}_\Tw\not\subseteq \P{s}{t}_\Sw$.  Therefore 
$\P{s}{t}_\Sw$ and $\P{s}{t}_\Tw$ are incomparable, and
 $\Sw$ is a maximal $(s,t)$-switchable set.

Now we prove the converse.
Let $\Sw$ be a maximal $(s,t)$-switchable set;
we know that 
$\P{s}{t}_\Sw$ is a prime ideal containing $\I{s}{t}$.
Take a minimal prime  $P$ of $\I{s}{t}$ contained in $\P{s}{t}_\Sw$.
By the first part of the proof we have
$P = \P{s}{t}_\Tw$ for a maximal $(s,t)$-switchable set $\Tw$.
By  $\P{s}{t}_\Tw \subseteq \P{s}{t}_\Sw$ it follows that $\Var{s}{t}_\Tw \subseteq \Var{s}{t}_\Sw$ and hence $\Sw \subseteq \Tw$.
But $\Sw$ is maximal, and  $\P{s}{t}_\Sw$ and $\P{s}{t}_\Tw$ are comparable:
necessarily we have $\Sw = \Tw$ and $\P{s}{t}_\Sw$ is thus a minimal prime.
\end{proof}

\section{Minimal primes for large values of $s$}\label{secNandN-1}

In this section we are going to specialize $s$ and make Theorem \ref{thmMinimalPrimes} more explicit.
We start with a result on $(s,t)$-switchable sets that actually holds for any $s \geq 2$,  but will  be particularly  helpful when $s=n,n-1$.

\begin{lem}\label{lemVarsGoodSets}
Let $\Sw$ be an $(s,t)$-switchable set with $s \geq 2$. 
Assume that $\Sw$ contains an element $a$ such that $(a_1,\ldots,a_s)\ne (1,\ldots,1), (r_1,\ldots,r_s)$.
Then all the indices $b\in \N$ such that $(b_{s+1}, \ldots, b_{n})=(a_{s+1}, \ldots, a_{n})$ belong to $\Sw$.
\end{lem}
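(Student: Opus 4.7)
Since $\Sw$ is closed under $s$-equivalence by property~(2) of Definition~\ref{definSTgood}, any two indices agreeing in the last $n-s$ coordinates belong to $\Sw$ together whenever they share the same value of $\sum_{i=1}^s c_i$. Hence the conclusion reduces to showing that the set
\[
T := \Big\{v \in \mathbb{Z} : \Sw \text{ contains } c \text{ with } (c_{s+1},\ldots,c_n) = (a_{s+1},\ldots,a_n) \text{ and } \textstyle\sum_{i=1}^s c_i = v\Big\}
\]
coincides with $\{s, s+1, \ldots, \sum_{i=1}^s r_i\}$. The non-corner hypothesis on $a$ is exactly the statement that $v_0 := \sum_{i=1}^s a_i$ satisfies $s < v_0 < \sum_{i=1}^s r_i$, and $v_0 \in T$ by assumption.

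\textbf{Key step (sum shifting).} I will prove that whenever $v \in T$ with $s < v < \sum_{i=1}^s r_i$, both $v-1$ and $v+1$ lie in $T$. Pick $c \in \Sw$ on the slice with first-$s$ sum $v$, and use property~(2) to replace $c$ by an $s$-equivalent index (still in $\Sw$) satisfying $c_1 < r_1$ and $c_k \geq 2$ for some $k \in \{2, \ldots, s\}$; both conditions are simultaneously achievable because $v < \sum r_i$ leaves room at position~$1$ while $v > s$ forces excess mass at some position $\geq 2$ (concretely, take $c_1 = 1$ when $v - 1 \leq \sum_{k \geq 2} r_k$, and otherwise saturate $c_k = r_k$ for $k \geq 2$). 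Let $c' \in \N$ be obtained from $c$ by replacing the pair $(c_1, c_k)$ with $(c_1 + 1, c_k - 1)$; then $c'$ is $s$-equivalent to $c$, hence in $\Sw$, and $\di(c, c') = 2$. Since $1 \in [t]$, property~(1) of Definition~\ref{definSTgood} gives both $\sw(1, c, c') \in \Sw$ and $\sw(1, c', c) \in \Sw$. Direct inspection shows that these two indices lie on the correct slice and have first-$s$ sums $v+1$ and $v-1$ respectively, so $v+1, v-1 \in T$.

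\textbf{Conclusion and main obstacle.} Iterating the shifting step starting from $v_0$ places every element of the open interval $(s, \sum_{i=1}^s r_i)$ inside $T$. Since $s \geq 2$ and each $r_i \geq 2$ give $\sum_{i=1}^s r_i \geq 2s \geq s+2$, both boundary-adjacent values $s+1$ and $\sum_{i=1}^s r_i - 1$ lie in that open interval, so one further application of the shift from each produces $s \in T$ and $\sum_{i=1}^s r_i \in T$. Thus $T$ fills the full range, and property~(2) of Definition~\ref{definSTgood} delivers every index on the slice. The main obstacle is the case analysis ensuring that the required rearrangement of $c$ can always be produced via $s$-equivalence; this is precisely where the hypothesis $s \geq 2$ and the two-sided strict inequalities on $v$ are used essentially.
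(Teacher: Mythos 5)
Your proof is correct and follows essentially the same strategy as the paper's: both exploit $s$-equivalence to rearrange the first $s$ coordinates so that a single switch at position~$1$ shifts $\sum_{i=1}^s c_i$ by~$\pm 1$, then iterate until every admissible value of this sum is reached. The paper packages the iteration as an induction on $\|a-b\sno$ and obtains the required $s$-equivalent representative via normal forms (noting $a_1>1$, $a_s<r_s$) rather than your explicit case split, but the underlying maneuver is the same.
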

\begin{proof}
Since $\Sw$ is closed under $s$-equivalence,
we may assume without loss of generality that both $a$ and $b$ are in normal form.
Notice that our hypothesis implies $a_1>1$ and $a_s<r_s$.

We proceed by induction on $\delta(a,b)= \|a-b\sno$.
If $\delta(a,b)=0$ then $a=b$ and the conclusion holds trivially.
Assume now  $\delta(a,b)>0$ and for example  $\|a\sno>\|b\sno$.
Consider the following index $a'\in \N$:
$$
a'_1=a_1-1,\qquad a'_s=a_s+1,\qquad a'_i =a_i\quad \text{for }\, i \in [n]\setminus \{1,s\}.
$$
We have that $a'$ is $s$-equivalent to $a$ and thus $a'\in \Sw$.
Since $\di(a,a')=2$ it follows that also $c'=\sw(1,a,a')\in \Sw$.
Let $c$ be the normal form of $c'$,
 we have that $c\in \Sw$ and $\|b\sno \leq \|c\sno <\|a\sno$, so that $\delta(b,c)<\delta(a,b)$ and the conclusion follows by induction.
\end{proof}

In the next part of this  section we study the minimal primes of $\I{n}{t}$ and therefore we fix $s=n$.
We can restrict our focus to $n\geq 3$:
if $n=2$ then we have a Hankel matrix, for which $\I{2}{t}=\It{2}{t}$ is prime and well-understood.

\begin{prop}\label{propGoodSetsSN}
Assume that $n\geq 3$. Then the $(n,t)$-switchable sets are exactly 
$$
\Sw_1=\emptyset,\, \Sw_2=\{(1,\ldots,1)\},\, \Sw_3=\{(r_1, \ldots, r_n)\},
$$
$$
\Sw_4= \{(1,\ldots,1),(r_1, \ldots, r_n)\},\, \Sw_5=\N.
$$
\end{prop}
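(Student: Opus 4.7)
The plan is to first verify that the five sets listed are indeed $(n,t)$-switchable, and then to use Lemma \ref{lemVarsGoodSets} to show that any $(n,t)$-switchable set containing an element different from $(1,\ldots,1)$ and $(r_1,\ldots,r_n)$ must equal $\N$. Combining these gives the classification.

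For the forward direction, I would check the five sets case by case, observing that property (2) in Definition \ref{definSTgood} becomes the condition that $\Sw$ is a union of $n$-equivalence classes (since $s=n$, the tuple $(a_{s+1},\ldots,a_n)$ is empty and the $n$-equivalence class of $a$ is simply the set of indices with sum $\sum a_i$). The classes of $(1,\ldots,1)$ and $(r_1,\ldots,r_n)$ are singletons because the minimum sum $n$ is uniquely realized by $(1,\ldots,1)$ and the maximum sum $\sum r_i$ is uniquely realized by $(r_1,\ldots,r_n)$. Thus each of $\Sw_2, \Sw_3, \Sw_4$ is closed under $n$-equivalence. Property (1) holds vacuously for $\Sw_1, \Sw_2, \Sw_3$ (too few elements), holds for $\Sw_5=\N$ trivially, and holds for $\Sw_4$ because $\di\bigl((1,\ldots,1),(r_1,\ldots,r_n)\bigr)=n\geq 3\neq 2$, so there is no pair at distance $2$ to check.

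For the converse, let $\Sw$ be an $(n,t)$-switchable set and suppose $\Sw$ contains some $a$ with $a\neq(1,\ldots,1)$ and $a\neq(r_1,\ldots,r_n)$. With $s=n$ the hypothesis of Lemma \ref{lemVarsGoodSets} becomes precisely that $(a_1,\ldots,a_n)\neq(1,\ldots,1),(r_1,\ldots,r_n)$, which is exactly our assumption, and the conclusion that every $b\in\N$ with $(b_{s+1},\ldots,b_n)=(a_{s+1},\ldots,a_n)$ belongs to $\Sw$ becomes vacuously every $b\in\N$ belongs to $\Sw$, so $\Sw=\N=\Sw_5$. Consequently, if $\Sw\neq\N$ then $\Sw\subseteq\{(1,\ldots,1),(r_1,\ldots,r_n)\}$, and the four subsets of this two-element set are exactly $\Sw_1,\Sw_2,\Sw_3,\Sw_4$.

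There is no real obstacle here; the whole argument is driven by how the statement of Lemma \ref{lemVarsGoodSets} degenerates at $s=n$. The only subtlety to spell out is the reason the singletons $\Sw_2,\Sw_3$ and their union $\Sw_4$ are closed under $n$-equivalence, namely that the extremal sums $n$ and $\sum_{i=1}^n r_i$ each admit a unique representation in $\N$.
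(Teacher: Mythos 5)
Your proposal is correct and follows essentially the same route as the paper: verify the five sets directly (noting that $\Sw_4$ satisfies property (1) only because $\di\bigl((1,\ldots,1),(r_1,\ldots,r_n)\bigr)=n\geq 3$, and that the extremal sums are uniquely realized so $\Sw_2,\Sw_3,\Sw_4$ are closed under $n$-equivalence), then invoke Lemma \ref{lemVarsGoodSets} for the converse to conclude that any $(n,t)$-switchable set other than $\N$ must be a subset of $\Sw_4$. The paper states the same argument more tersely; you spell out how the hypothesis and conclusion of the lemma degenerate at $s=n$, which is exactly the point the paper leaves implicit.
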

\begin{proof}
First we observe that $\Sw_4$ satisfies property (1) in Definition \ref{definSTgood} if and only if $n\geq 3$, as the distance between the two indices is $n$. 
The other four sets satisfy trivially that property.
Moreover, all  five sets are closed under $n$-equivalence, therefore they are  $(n,t)$-switchable.
These are the only $(n,t)$-switchable sets: if $\Sw \ne \N$ is a proper $(n,t)$-switchable subset, then by Lemma \ref{lemVarsGoodSets} $\Sw \subseteq \Sw_4$ (otherwise $\Sw=\N$) and the conclusion follows.
\end{proof}

\begin{thm}\label{thmMinimalPrimesSN}
If $n\geq 3$ then the ideal $\I{n}{t}$ has exactly two minimal primes: the ideal $\It{n}{t}$ and the monomial prime
$
\P{n}{t}_{\Sw_4}=\big( x_a\,|\, a \ne(1,\ldots,1),(r_1, \ldots, r_n) \big).
$
\end{thm}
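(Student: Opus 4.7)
The plan is to combine Theorem~\ref{thmMinimalPrimes} with Proposition~\ref{propGoodSetsSN}: the former identifies the minimal primes of $\I{n}{t}$ with the ideals $\P{n}{t}_\Sw$ for $\Sw$ a maximal $(n,t)$-switchable set, and the latter lists the only $(n,t)$-switchable sets as $\Sw_1,\ldots,\Sw_5$. First I would compute each $\P{n}{t}_{\Sw_i}$. For $i\in\{1,2,3,4\}$ no two distinct elements of $\Sw_i$ are connected or $n$-equivalent, since these sets contain at most two elements whose distance is $n\ge 3$ and whose coordinate sums differ; hence $\It{n}{t}_{\Sw_i}=0$ and $\P{n}{t}_{\Sw_i}=\Var{n}{t}_{\Sw_i}$ is the monomial prime $(x_a : a\notin \Sw_i)$. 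For $\Sw_5=\N$ we have $\P{n}{t}_{\Sw_5}=\It{n}{t}$ by definition.

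Next I would rule out $\Sw_1,\Sw_2,\Sw_3$ as maximal. Each is strictly contained in $\Sw_4$, and the corresponding monomial primes satisfy $\P{n}{t}_{\Sw_1}\supsetneq\P{n}{t}_{\Sw_2}\supsetneq\P{n}{t}_{\Sw_4}$ and similarly $\P{n}{t}_{\Sw_3}\supsetneq\P{n}{t}_{\Sw_4}$, so in each case the associated ideals are comparable and the maximality condition fails. The set $\N$ is trivially maximal because no $(n,t)$-switchable set contains it properly.

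The nontrivial step, which I expect to be the main obstacle, is to show that $\Sw_4$ is maximal, equivalently that $\P{n}{t}_{\Sw_4}$ and $\It{n}{t}$ are incomparable. One direction is immediate: by Proposition~\ref{propColonIt}(2), $\It{n}{t}$ is prime and contains no monomials, whereas $\P{n}{t}_{\Sw_4}$ manifestly contains variables, so $\P{n}{t}_{\Sw_4}\not\subseteq\It{n}{t}$. For the reverse non-inclusion I would exhibit an explicit generator of $\It{n}{t}$ lying outside $\P{n}{t}_{\Sw_4}$. Taking $a=(1,\ldots,1)$, $b=(r_1,\ldots,r_n)$ and $i=1$, the minor $f_{1,a,b}=x_ax_b-x_{\sw(1,a,b)}x_{\sw(1,b,a)}$ belongs to $\It{n}{t}$; the switched indices $(r_1,1,\ldots,1)$ and $(1,r_2,\ldots,r_n)$ differ from both $(1,\ldots,1)$ and $(r_1,\ldots,r_n)$ because $n\ge 3$ and $r_i\ge 2$, so $x_{\sw(1,a,b)}x_{\sw(1,b,a)}\in\P{n}{t}_{\Sw_4}$, while $x_ax_b\notin\P{n}{t}_{\Sw_4}$ since $\P{n}{t}_{\Sw_4}$ is prime and neither factor lies in it. Hence $f_{1,a,b}\notin\P{n}{t}_{\Sw_4}$, establishing incomparability. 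Combining, the only maximal $(n,t)$-switchable sets are $\Sw_4$ and $\N$, and the theorem follows.
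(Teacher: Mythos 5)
Your proposal is correct and follows essentially the same route as the paper: apply Theorem~\ref{thmMinimalPrimes} together with the classification from Proposition~\ref{propGoodSetsSN}, observe that $\Sw_1,\Sw_2,\Sw_3$ fail maximality because $\P{n}{t}_{\Sw_4}\subseteq\P{n}{t}_{\Sw_i}$, and certify maximality of $\Sw_4$ via the generalized minor $f_{1,(1,\ldots,1),(r_1,\ldots,r_n)}\notin\P{n}{t}_{\Sw_4}$. You spell out a few steps the paper leaves implicit (that $\It{n}{t}_{\Sw_i}=0$ for $i\le 4$, so these $\P{n}{t}_{\Sw_i}$ are monomial, and the primeness argument for $x_ax_b\notin\P{n}{t}_{\Sw_4}$), but the underlying argument is the same.
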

\begin{proof}
We want to apply Theorem \ref{thmMinimalPrimes}.
We already know that $\It{n}{t}=\P{n}{t}_\N $ is a minimal prime.
By Proposition \ref{propGoodSetsSN} we only have to look for maximal $(n,t)$-switchable sets in the list 
$\{ \Sw_1, \Sw_2, \Sw_3,\Sw_4\}.
$
The set $\Sw_4$ is maximal: 
indeed the only $(n,t)$-switchable set containing $\Sw_4$ is $\N$, 
and $\P{n}{t}_\N$ and $\P{n}{t}_{\Sw_4}$ are not comparable as  $f_{1,(1,\ldots,1),(r_1, \ldots, r_n)} \notin \P{n}{t}_{\Sw_4}$.
On the other hand, we have $\P{s}{t}_{\Sw_4} \subseteq \P{s}{t}_{\Sw_i}$ for each $i=1,2,3$ and thus these 3 sets are not maximal.
\end{proof}

For the first minimal prime Proposition \ref{propColonIt} gives us an expression as colon ideal.
The next two lemmas will lead to a similar  expression for the other minimal prime ideal.

\begin{lem}\label{lemMidpoint}
Let $a, b \in \N$ be two indices such that
$\| a\nno -\|b\nno=2$.
Let $c \in \N$ be  an index such that $\|c\nno=\|a\nno-1=\|b\nno+1$.
Then  $x_a x_b = x_c^2 \pmod{\I{n}{t}}$.
\end{lem}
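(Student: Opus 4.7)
The plan is to find $n$-equivalent representatives $a'$ of $a$ and $b'$ of $b$ that differ in exactly two coordinates, one of which is the first coordinate (which lies in $[t]$ since $t \geq 1$), and such that the two switched indices produced by the associated slice minor both land in the $n$-equivalence class of $c$. Concretely, I will seek $a',b' \in \N$ with $a'_k = b'_k$ for $k \geq 3$, $a'_1 = b'_1 + 1$, and $a'_2 = b'_2 + 1$. Then the slice minor
$$f_{1,a',b'} = x_{a'} x_{b'} - x_{\sw(1,a',b')} x_{\sw(1,b',a')}$$
lies in $\I{n}{t}$. Both $\sw(1,a',b')$ and $\sw(1,b',a')$ have $\|\cdot\nno = \|a\nno - 1 = \|c\nno$, so since $s=n$ their variables coincide with $x_c$; combined with $x_{a'} = x_a$ and $x_{b'} = x_b$ coming from $n$-equivalence, this gives exactly $x_a x_b \equiv x_c^2 \pmod{\I{n}{t}}$.

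To exhibit such $a', b'$, set $A = \|a\nno$. From $\|b\nno \geq n$ we get $A \geq n+2$, and clearly $A \leq r_1 + \cdots + r_n$. It then suffices to choose positive integers $a'_1, \ldots, a'_n$ with $a'_k \leq r_k$, $a'_1 \geq 2$, $a'_2 \geq 2$, and $\sum_k a'_k = A$, and set $b'_1 = a'_1 - 1$, $b'_2 = a'_2 - 1$, $b'_k = a'_k$ for $k \geq 3$; both tuples then lie in $\N$. The existence of such $a'$ is a routine integer-distribution argument: starting from the all-ones tuple, one must distribute $A - n \geq 2$ surplus units subject to caps $r_k - 1$, placing at least one unit at each of positions $1$ and $2$ (possible because $r_1, r_2 \geq 2$) and then spreading the remaining $A - n - 2 \leq \sum_k r_k - n - 2$ units freely among the $n$ positions within the caps.

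The main obstacle is just the bookkeeping for this existence claim at the extremes, namely when $A = n+2$ (forcing $b' = (1,\ldots,1)$ and $a' = (2,2,1,\ldots,1)$) and when $A = \sum_k r_k$ (forcing $a' = (r_1,\ldots,r_n)$ and $b' = (r_1-1,r_2-1,r_3,\ldots,r_n)$); in both boundary cases the construction is essentially forced and one checks directly that the resulting tuples respect the required bounds. Once $a', b'$ are in hand, the verification $x_a x_b - x_c^2 \in \I{n}{t}$ is immediate from the definition of the slice minor and from the fact that two indices of $\N$ with equal $\|\cdot\nno$ define the same variable.
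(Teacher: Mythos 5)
Your proof is correct and follows essentially the same route as the paper: choose $n$-equivalent representatives $a',b'$ differing by $+1$ in coordinates $1$ and $2$, form the slice minor $f_{1,a',b'} \in \I{n}{t}$, and observe that both switched indices share $\|\cdot\nno = \|c\nno$, hence equal $x_c$. The paper simply asserts the existence of such $a',b'$ while you supply the (correct) integer-distribution verification, so the only difference is a matter of detail, not method.
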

\begin{proof}
Since $\sum_{i=1}^n (a_i-b_i)=2$ there exist  $ a'_i,b'_i\in [r_i]$ such that 
$$
a'_1-b'_1=1,\quad a'_2-b'_2=1,\quad a'_i=b'_i \quad  \text{ for } i\geq 3,$$
$$
 \|a'\nno=\|a\nno, \quad \|b'\nno= \|b \nno.
$$
Thus $a',b'$ are $n$-equivalent to $a,b$, respectively, and  $x_a=x_{a'},x_b=x_{b'}$.
We have $\di(a',b')=2$ and thus the minor $f_{1,a',b'}=x_{a'}x_{b'}-x_{\sw(1,a',b')}x_{\sw(1,b',a')}$  belongs to $\I{n}{t}$.
But this minor proves the conclusion, because $\| \sw(1,a',b')\nno=\|a\nno-1$, $\| \sw(1,b',a')\nno=\|a\nno-1$ and hence these two indices define the same variable as $c$.
\end{proof}

\begin{lem}\label{lemExponent}
Let $a \in \N\setminus \{(1,\ldots,1),(r_1, \ldots, r_n)\}$. 
There exists  $p \in \NN $ such that $x_a^p$ is equivalent $ \pmod{ \I{n}{t}}$ to a monomial divisible by the product $\mathbf{p}$ of all variables.
\end{lem}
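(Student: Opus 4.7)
Since $s=n$, every variable of $R$ depends only on $\|b\|_{\langle n \rangle}$; write $X_j$ for the variable with $\|b\|_{\langle n \rangle} = j$, where $j$ ranges over $\{n, n+1, \ldots, N\}$ with $N := \sum_{i=1}^n r_i$. Setting $k := \|a\|_{\langle n \rangle}$, we have $x_a = X_k$, and the hypothesis $a \ne (1, \ldots, 1), (r_1, \ldots, r_n)$ translates to $n < k < N$; also $\mathbf{p} = X_n X_{n+1} \cdots X_N$. By Lemma \ref{lemMidpoint},
\[
X_m^2 \equiv X_{m-1} X_{m+1} \pmod{\I{n}{t}} \qquad \text{for every } n < m < N.
\]
This is the only relation the argument will use.

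The plan is to establish by induction on $j \geq 1$ the following claim: there exists a monomial $M_j$ with
\[
X_k^{3^j} \equiv M_j \pmod{\I{n}{t}}
\]
that is divisible by $X_i$ for every integer $i$ with $\max(n, k-j) \leq i \leq \min(N, k+j)$. Taking $j := \max(k-n,\,N-k)$ then forces this range to equal $[n, N]$, so $\mathbf{p}$ divides $M_j$ and the desired power is $p := 3^j$.

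For the base case $j=1$: writing $X_k^3 = X_k \cdot X_k^2$ and applying the key congruence once yields $X_k^3 \equiv X_{k-1} X_k X_{k+1} =: M_1$, and the range $[k-1, k+1]$ lies in $[n, N]$ because $n < k < N$. For the inductive step, cube: $X_k^{3^{j+1}} \equiv M_j^3$, in which every variable appearing in $M_j$ has exponent at least $3$. Whenever $k-j > n$, substitute $X_{k-j}^2 \equiv X_{k-j-1} X_{k-j+1}$: the exponent of $X_{k-j}$ drops by $2$ (hence is still $\geq 1$), the exponent of $X_{k-j+1}$ increases by $1$, and the new variable $X_{k-j-1}$ enters with exponent $1$. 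Symmetrically, whenever $k+j < N$, substitute $X_{k+j}^2 \equiv X_{k+j-1} X_{k+j+1}$. The resulting monomial $M_{j+1}$ is then divisible by every variable in the extended range.

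The main obstacle --- and the reason for \emph{cubing} rather than squaring at each step --- is to avoid losing a boundary variable when it happens to have exponent $1$ in $M_j$: in $M_j^2$ such a variable would have exponent $2$, and the substitution $X_{k-j}^2 \mapsto X_{k-j-1} X_{k-j+1}$ would erase it entirely, breaking the induction. Cubing guarantees every variable in the range of $M_j^3$ has exponent at least $3$, so after removing $2$ copies the boundary variable survives with exponent at least $1$, while the interior variables are untouched (or only gain multiplicity). This is precisely the stability needed to iterate.
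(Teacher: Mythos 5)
Your proof is correct, and like the paper's argument it hinges entirely on Lemma~\ref{lemMidpoint}, rephrased in your one-index notation as $X_m^2\equiv X_{m-1}X_{m+1}$ for $n<m<N$; the difference is in how the induction is organized. The paper first proves a local claim: for each $b\in\N$ there is an $m_b$ with $x_a^{m_b}$ congruent modulo $\I{n}{t}$ to a monomial divisible by $x_b$, obtained by an induction on $\bigl|\|a\nno-\|b\nno\bigr|$ that doubles the exponent at each step (and uses the hypothesis $a\ne(1,\ldots,1),(r_1,\ldots,r_n)$ to guarantee the auxiliary index $d$ exists on the appropriate side); it then multiplies these congruences together and takes $p\geq\sum_b m_b$. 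You instead run a single induction that tracks a symmetric interval $[\max(n,k-j),\min(N,k+j)]$ of captured variables, cubing the exponent at each step and applying the two boundary substitutions $X_{k\pm j}^2\equiv X_{k\pm j-1}X_{k\pm j+1}$; the cube (rather than square) is exactly the slack needed so the boundary variable survives with exponent $\geq 1$, and you explain that point well. Your route buys a closed-form exponent $p=3^{\max(k-n,\,N-k)}$ in one pass and handles the two ends $n$ and $N$ of the index range simultaneously, avoiding the paper's case split on the sign of $\|a\nno-\|b\nno$ and the final summation over $b$; the paper's claim-then-combine structure is somewhat more modular. Both proofs are correct and yield an exponent growing exponentially in the size of the hypermatrix.
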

\begin{proof}
We are going to prove first the following claim: for any $b\in \N$ there exists a positive integer $m_b$ such that $x_a^{m_b}\equiv x_b \alpha \pmod{ \I{n}{t}}$, for some monomial $\alpha$.
We proceed by induction on the quantity $\gamma(a,b)=\big| \|a\nno - \|b\nno \big|$.
If $\gamma(a,b)=0$, then $a,b$ are $n$-equivalent and we can just choose $m_b=1, \alpha=1$.
Assume now $\delta(a,b)>0$ and for example $\|a\nno > \|b\nno $.
Take $c\in \N$ such that $\|c\nno = \|b\nno+1 $:
we have $\gamma(b,c)=1$ and $\gamma(a,c)<\gamma(a,b)$.
In particular, by induction, there exists a positive integer $m_c$ such that 
$x_a^{m_c}\equiv x_c \beta \pmod{ \I{n}{t}}$ for some monomial $\beta$.
Since $a <(r_1,\ldots,r_n)$ and $\|c\nno \leq \|a\nno$, we have that $c<(r_1,\ldots,r_n)$ and hence there exists $d\in \N$ such that $\|d\nno = \|c\nno+1=\|b\nno+2 $.
By Lemma \ref{lemMidpoint} we get that 
 $x_b x_d = x_c^2 \pmod{\I{n}{t}}$, and consequently 
 $
 x_a^{2m_c}\equiv x_c^2 \beta^2\equiv  x_b x_d \beta^2\pmod{ \I{n}{t}}
 $
 so that the claim follows setting $m_b=2m_c$ and $\alpha=x_d\beta^2$.
 
Now to obtain the statement of the proposition it is sufficient to choose $p \geq \sum_{b \in \N} m_b$, where $m_b$ is the positive integer provided by the claim for each $b\in \N$.
\end{proof}

Let $\rad(\cdot)$ denote the radical of an ideal.
 
\begin{prop}
Let  $\P{n}{t}_{\Sw_4}$ be the monomial minimal prime of $\I{n}{t}$ as in Theorem \ref{thmMinimalPrimesSN}.
We have $ \P{n}{t}_{\Sw_4}= \rad\big({\I{n}{t} : f_{1,(1,\ldots,1),(r_1, \ldots, r_n)}}\big)$.
\end{prop}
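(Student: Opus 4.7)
The plan is to prove the two inclusions separately, exploiting the fact that the minimal primes of $\I{n}{t}$ are explicitly known by Theorem \ref{thmMinimalPrimesSN}. Let me abbreviate $f = f_{1,(1,\ldots,1),(r_1,\ldots,r_n)}$, so that
\[
f = x_{(1,\ldots,1)}x_{(r_1,\ldots,r_n)} - x_{(r_1,1,\ldots,1)}x_{(1,r_2,\ldots,r_n)}.
\]
A preliminary observation I would record is that $f \notin \P{n}{t}_{\Sw_4}$. Indeed, since $\Sw_4 = \{(1,\ldots,1),(r_1,\ldots,r_n)\}$ and these two indices are not connected (their Hamming distance is $n \geq 3$), we have $\It{n}{t}_{\Sw_4} = 0$, so $\P{n}{t}_{\Sw_4} = \Var{n}{t}_{\Sw_4}$ is the monomial ideal generated by the $x_a$ with $a \in \N \setminus \Sw_4$. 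The first monomial of $f$ uses only the two variables $x_{(1,\ldots,1)}$ and $x_{(r_1,\ldots,r_n)}$, neither of which lies in $\P{n}{t}_{\Sw_4}$, so $f \notin \P{n}{t}_{\Sw_4}$.

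For the inclusion $\rad(\I{n}{t} : f) \subseteq \P{n}{t}_{\Sw_4}$: let $g \in \rad(\I{n}{t} : f)$, so that $g^N f \in \I{n}{t}$ for some $N$. By Theorem \ref{thmMinimalPrimesSN}, $\rad(\I{n}{t}) = \It{n}{t} \cap \P{n}{t}_{\Sw_4}$, hence $g^N f \in \P{n}{t}_{\Sw_4}$. Since $\P{n}{t}_{\Sw_4}$ is prime and does not contain $f$, we conclude $g \in \P{n}{t}_{\Sw_4}$.

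For the reverse inclusion $\P{n}{t}_{\Sw_4} \subseteq \rad(\I{n}{t} : f)$: since the right hand side is a radical ideal, it suffices to check that each generator $x_a$ (with $a \notin \Sw_4$) lies in $\rad(\I{n}{t} : f)$, that is, some power $x_a^p f$ lies in $\I{n}{t}$. Here is where I would combine the two lemmas just proved with Proposition \ref{propColonIt}. By Lemma \ref{lemExponent} there exists $p \in \NN$ and a monomial $h$ with $x_a^p \equiv \mathbf{p}\, h \pmod{\I{n}{t}}$, so
\[
x_a^p f \equiv \mathbf{p}\, h\, f \pmod{\I{n}{t}}.
\]
Now $f \in \It{n}{t}$ by definition, and by Proposition \ref{propColonIt}(1) we have $\It{n}{t} = \I{n}{t} : \mathbf{p}$, which gives $\mathbf{p} \cdot \It{n}{t} \subseteq \I{n}{t}$; in particular $\mathbf{p}\, h\, f \in \I{n}{t}$. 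Therefore $x_a^p f \in \I{n}{t}$, as required.

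The main conceptual step is the reverse inclusion, where one has to translate membership in the monomial prime $\P{n}{t}_{\Sw_4}$ into a computation modulo $\I{n}{t}$ — this is exactly what Lemma \ref{lemExponent} is tailored for, and the bridge to $f$ is provided by the colon relation $\It{n}{t} = \I{n}{t} : \mathbf{p}$. Everything else is a short deduction from the already established structural results.
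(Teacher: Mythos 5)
Your proof is correct, and in one place it is cleaner than the paper's. Both arguments share the same skeleton: show $\mathbf{p}\,f \in \I{n}{t}$, then use Lemma \ref{lemExponent} to pass from $\mathbf{p}$ to a power of an arbitrary generator $x_a$ of $\P{n}{t}_{\Sw_4}$; and for the reverse inclusion use that $\P{n}{t}_{\Sw_4}$ is a prime containing $\I{n}{t}$ but not containing $f$. The difference is how $\mathbf{p}\,f \in \I{n}{t}$ is established. The paper builds the explicit path $c_0=(1,\ldots,1),\, c_j=(1,\ldots,1,r_{n-j+1},\ldots,r_n)$ for $j \in [n-3]$, applies Lemma \ref{lemProductMonomial} to get $x_{c_1}\cdots x_{c_{n-3}} f \in \I{n}{t}$, and observes these variables are distinct so their product divides $\mathbf{p}$. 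You instead note that $f$ is a generalized minor with switched component $1 \in [t]$, hence $f \in \It{n}{t}$, and then Proposition \ref{propColonIt}(1), $\It{n}{t} = \I{n}{t}:\mathbf{p}$, immediately gives $\mathbf{p}\,f \in \I{n}{t}$. This dispenses with the path construction and with Lemma \ref{lemProductMonomial} altogether, trading an ad hoc combinatorial step for an appeal to a structural result already in hand; it is a tidier route. One minor inefficiency in your reverse inclusion: you pass through $\rad(\I{n}{t}) = \It{n}{t}\cap\P{n}{t}_{\Sw_4}$ to conclude $g^N f \in \P{n}{t}_{\Sw_4}$, when the direct inclusion $\I{n}{t} \subseteq \P{n}{t}_{\Sw_4}$ (since $\P{n}{t}_{\Sw_4}$ is a minimal prime over $\I{n}{t}$) gives the same thing with no detour; that is what the paper does.
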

\begin{proof}
Let us prove  that $x_a \in \rad\big({\I{n}{t} : f_{1,(1,\ldots,1),(r_1, \ldots, r_n)}}\big)$ for any variable $x_a \in \P{n}{t}_{\Sw_4}$.
Consider the  indices
$
c_0=(1,\ldots,1), c_j=(1,\ldots,1,r_{n-j+1},\ldots,r_n)$ for $j\in [n-3]$.
These indices form a path between $(1,\ldots,1)$ and $(1,1,r_3,\ldots,r_n)$ and  they do not differ in the first component.
By Lemma \ref{lemProductMonomial}
$x_{c_1} x_{c_2} \cdots x_{c_{n-3}} f_{1,(1,\ldots,1),(r_1, \ldots, r_n)} \in \I{n}{t}$.
Moreover, the variables $x_{c_j}$ are all distinct because 
the values $\|c_j\nno$ are strictly increasing.
It follows that $x_{a_1} x_{a_2} \cdots x_{a_{n-3}}$ divides the product $\mathbf{p}$ of all variables, and thus $\mathbf{p}\in \I{n}{t}: f_{1,(1,\ldots,1),(r_1, \ldots, r_n)}$.
By Lemma \ref{lemExponent} there exists a positive integer $p$ such that $x_a^p$ is equivalent $\pmod{\I{n}{t}}$ to a monomial divisible by $\mathbf{p}$,
so it follows that $x_a^p \in \I{n}{t}: f_{1,(1,\ldots,1),(r_1, \ldots, r_n)}$
 and $x_a \in \rad\big(\I{n}{t}: f_{1,(1,\ldots,1),(r_1, \ldots, r_n)}\big)$.

Conversely,
let $g \in \rad\big(\I{n}{t} : f_{1,(1,\ldots,1),(r_1, \ldots, r_n)}\big)$.
We have $g^p f_{1,(1,\ldots,1),(r_1, \ldots, r_n)} \in \I{n}{t} \subseteq \P{n}{t}_{\Sw_4}$ for some positive integer $p$;
but $f_{1,(1,\ldots,1),(r_1, \ldots, r_n)} \notin \P{n}{t}_{\Sw_4}$ and $\P{n}{t}_{\Sw_4}$ is prime,
so that $g^p \in \P{n}{t}_{\Sw_4}$ and hence $g \in \P{n}{t}_{\Sw_4}$, 
proving that  $\P{n}{t}_{\Sw_4}$ contains
$
\rad\big({\I{n}{t} : f_{1,(1,\ldots,1),(r_1, \ldots, r_n)}} \big).
$
\end{proof}

Unlike the case of the minimal prime $\It{n}{t}$, the $\P{n}{t}_{\Sw_4}$-component is not prime in general.

\begin{ex}
Let $\N=[3] \times [3] \times [3]$, $s=t=n=3$,
then the $\P{n}{t}_{\Sw_4}$-component  is
$
\big( x_{(3, 3, 1)},\,  x_{(3, 2, 1)},\, x_{(3, 1, 1)},\, x_{(3, 3, 2)}^2,\,  x_{(2, 1, 1)}x_{(3, 3, 2)},\, x_{(2, 1, 1)}^2         \big) \subsetneq \P{n}{t}_{\Sw_4}.
$
\end{ex}

We have evidence to believe that if $s=n$ there are no embedded primes, and that the primary components are exactly the two colon ideals mentioned above (without the radical).

\begin{con}\label{conjPrimary}
The  primary decomposition of $\I{n}{t}$ is 
$$
\I{n}{t}= \It{n}{t} \cap \big( \I{n}{t} : f_{1,(1,\ldots,1),(r_1, \ldots, r_n)} \big).
$$
\end{con}

Now we turn to the study of the minimal primes of $\I{n-1}{t}$
and hence we fix $s=n-1$ for the remainder of this section.
First we find all the $(n-1,t)$-switchable sets.
Given  ${B}\subseteq [r_n]$ we define the following subset of $\N$:
$$
\Sw_{{B}}=[r_1]\times \cdots \times [r_{n-1}]\times B.
$$
We deal separately with the cases $n=3$ and $n\geq 4$,
which yield different decompositions.

\begin{prop}\label{propSwitchable4}
Assume that $n\geq 4$.
The $(n-1,t)$-switchable sets consist of two classes: 
\begin{enumerate}
\item
all the subsets of the form
$
\Sw_{B}
$
where ${B}$ is any subset of $[r_n]$;

\item all the subsets $\Sw\subseteq \N$ such that for all $ a \in \Sw$ we have either $(a_1, \ldots, a_{n-1})=(1,\ldots, 1),$ or
$(a_1, \ldots, a_{n-1})=(r_1,\ldots, r_{n-1})$. 
\end{enumerate}
\end{prop}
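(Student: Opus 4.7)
The plan is to prove the classification in two directions. First I would verify that both classes consist of $(n-1,t)$-switchable sets. For class (1), $\Sw_B$ is closed under $(n-1)$-equivalence since such equivalence preserves the $n$-th component, and property (1) of Definition~\ref{definSTgood} follows because a switch at $i\in[t]$ leaves either $a_n$ or $b_n$ in the $n$-th slot. For class (2), closure under $(n-1)$-equivalence holds because the sums $n-1$ and $\sum_{i=1}^{n-1} r_i$ are extremal, forcing every $(n-1)$-equivalent index to have the same first $n-1$ coordinates; property (1) is vacuously true, since any two elements either agree on the first $n-1$ coordinates (distance $\leq 1$) or disagree on all of them (distance $\geq n-1\geq 3$). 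This vacuous step is exactly where the hypothesis $n\geq 4$ enters.

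For the converse, I would take an $(n-1,t)$-switchable set $\Sw\neq\emptyset$ not of class (2). Then some $a\in\Sw$ has $(a_1,\ldots,a_{n-1})$ neither $(1,\ldots,1)$ nor $(r_1,\ldots,r_{n-1})$. Define
\[
B=\{c_n:\,c\in\Sw,\ (c_1,\ldots,c_{n-1})\neq(1,\ldots,1),(r_1,\ldots,r_{n-1})\}
\]
and invoke Lemma~\ref{lemVarsGoodSets} for each such $c$ to obtain $\Sw_B\subseteq\Sw$. For the reverse inclusion I would argue by contradiction: suppose $c\in\Sw$ has $c_n\notin B$; then by the definition of $B$ the tuple $(c_1,\ldots,c_{n-1})$ is extremal, say $(1,\ldots,1)$ (the case $(r_1,\ldots,r_{n-1})$ is symmetric, handled by the witness $(1,r_2,\ldots,r_{n-1},b)$ below). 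Pick any $b\in B$ and set $a'=(2,1,\ldots,1,b)$; since position $2$ of this tuple is $1\neq r_2$ and position $1$ is $2\neq 1$, the first $n-1$ entries are non-extremal, so $a'\in\Sw_B\subseteq\Sw$. As $c$ and $a'$ differ exactly in positions $1$ and $n$, property (1) applied with $i=1\in[t]$ yields $\sw(1,c,a')=(2,1,\ldots,1,c_n)\in\Sw$, whose first $n-1$ coordinates are still non-extremal. By the definition of $B$ this forces $c_n\in B$, a contradiction.

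The main obstacle is the construction of the witness $a'$, which must simultaneously (i) lie in $\Sw_B$, so its first $n-1$ entries must be non-extremal; (ii) differ from $c$ in position $1$, so that a switch is available no matter how small $t$ is (we always have $1\in[t]$); and (iii) have distance exactly $2$ from $c$. The choice $(2,1,\ldots,1,b)$ accomplishes all three because the assumption $r_i\geq 2$ guarantees both that $2$ is a legal value in position $1$ and that the remaining position $2$ carries the value $1\neq r_2$, witnessing non-extremality.
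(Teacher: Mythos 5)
Your proof is correct and follows essentially the same approach as the paper's: verify both classes are $(n-1,t)$-switchable (with class (2) relying on the distance being either $\le 1$ or $\ge n-1\ge 3$, which is where $n\ge 4$ enters), then for the converse define the projection $B$ of the "non-extremal" part of $\Sw$, use Lemma~\ref{lemVarsGoodSets} to get $\Sw_B\subseteq\Sw$, and construct the witness index $(2,1,\ldots,1,b)$ with distance exactly $2$ so that a switch in position $1$ produces a non-extremal index with the disputed last coordinate, yielding a contradiction. The paper phrases the converse as a direct contradiction rather than a two-sided inclusion, but the key moves — the appeal to Lemma~\ref{lemVarsGoodSets} and the same witness construction — are identical.
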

\begin{proof}
First we notice that the two types of sets of the statement are $(n-1,t)$-switchable.
This is clear for the first type, whereas for the second type it is true because  there are no $a,b\in \Sw$ such that $\di(a,b)=2$, and it is trivially closed under $(n-1)$-equivalence.

Now we prove that these are the only ones.
Let $\Sw$ be an $(n-1,t)$-switchable set.
Assume that $\Sw$ is not of type (2) in the statement, that is to say, $\Sw$ contains an index $b$ such that $(b_1, \ldots, b_{n-1})\ne(1,\ldots, 1),$ 
$(r_1,\ldots, r_{n-1})$;
we claim that $\Sw$ is of the form (1) in the statement.
If $\Sw$ is not of that form, then there are two indices $a\in \Sw, c\notin \Sw$ such that $a_n=c_n$.
In particular, we must have $(a_1, \ldots, a_{n-1})\in  \{(1,\ldots, 1),(r_1,\ldots, r_{n-1})\}$, otherwise by Lemma \ref{lemVarsGoodSets} we would have $c \in \Sw$.
For example  $(a_1, \ldots, a_{n-1})= (1,\ldots, 1)$.
We also must have $b_n\ne c_n$, otherwise $c \in \Sw$ again by Lemma \ref{lemVarsGoodSets}.
Consider the index $e=(2,1,\ldots, 1, b_n)$: since $e_n=b_n$ we have $e	\in \Sw$ by Lemma \ref{lemVarsGoodSets}.
Since $a_n=c_n\ne b_n$ we get  $\di(a,e)=2$ and hence $f=\sw(1,a,e)\in \Sw$.
But $f_n=a_n=c_n$, and $(f_1,\ldots, f_{n-1})\ne (1,\ldots, 1),(r_1,\ldots, r_{n-1})$ because $n\geq 3$ and $f_2=1$.
It follows that  $c\in \Sw$ by Lemma \ref{lemVarsGoodSets}, which is a contradiction.
We conclude that $\Sw$ is of type (1) in the statement, with $B= \{b_n: \, b \in \Sw\}$.
\end{proof}

We see now that if $n\geq 4$ the minimal primes of $\I{n-1}{t}$ are very similar to those of $\I{n}{t}$, except that the monomial prime is generated by all but $2r_n$ variables instead of all but $2$.

\begin{thm}\label{thmMinimalPrimes4}
If $n\geq 4$ then the ideal $\I{n-1}{t}$ has exactly two minimal primes: the ideal $\It{n-1}{t}$ and the monomial prime
$
P=\big( x_a\,|\, (a_1, \ldots, a_{n-1})\ne (1,\ldots, 1),(r_1,\ldots, r_{n-1}) \big)
$.
\end{thm}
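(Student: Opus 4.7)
The plan is to combine Theorem \ref{thmMinimalPrimes} with the classification from Proposition \ref{propSwitchable4}. By Theorem \ref{thmMinimalPrimes} the minimal primes of $\I{n-1}{t}$ are exactly the ideals $\P{n-1}{t}_\Sw$ for $\Sw$ a maximal $(n-1,t)$-switchable set, so it suffices to single out the maximal sets among the two families described in Proposition \ref{propSwitchable4}. The two natural candidates are $\N$ itself, yielding $\P{n-1}{t}_\N = \It{n-1}{t}$, and the largest type (2) set
$$
\Sw^* = \big\{a \in \N : (a_1,\ldots,a_{n-1}) \in \{(1,\ldots,1),(r_1,\ldots,r_{n-1})\}\big\}.
$$

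I would first identify $\P{n-1}{t}_{\Sw^*}$ with $P$. The connected components of $\Sw^*$ consist of those indices whose first $n-1$ coordinates equal $(1,\ldots,1)$ and those whose first $n-1$ coordinates equal $(r_1,\ldots,r_{n-1})$, since passing from one component to the other requires simultaneously altering $n-1 \geq 3$ coordinates and the intermediate points fall outside $\Sw^*$. Hence any two connected indices $a,b\in\Sw^*$ share their first $n-1$ coordinates, and for $i\in[t]\subseteq[n-1]$ (using Remark \ref{remTminusOne} to assume $t\leq n-1$) one has $a_i = b_i$, so $\sw(i,a,b) = a$ and $f_{i,a,b}=0$. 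Thus $\It{n-1}{t}_{\Sw^*} = (0)$ and $\P{n-1}{t}_{\Sw^*} = \Var{n-1}{t}_{\Sw^*} = P$. To confirm maximality of $\Sw^*$, note that the only $(n-1,t)$-switchable proper superset of $\Sw^*$ is $\N$ (because $\Sw^*\subseteq\Sw_B$ forces $B=[r_n]$), and the ideals $P$ and $\It{n-1}{t}$ are incomparable: $P \not\subseteq \It{n-1}{t}$ by Proposition \ref{propColonIt}, while $\It{n-1}{t}\not\subseteq P$ since the generator $f_{1,(1,\ldots,1),(r_1,\ldots,r_n)}$ reduces modulo $P$ to the nonzero monomial $x_{(1,\ldots,1)}x_{(r_1,\ldots,r_n)}$.

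The remaining, and most delicate, step is to rule out every other $(n-1,t)$-switchable set. Type (2) sets $\Sw \subsetneq \Sw^*$ still satisfy $\It{n-1}{t}_\Sw=(0)$ by the same argument, so $\P{n-1}{t}_\Sw=\Var{n-1}{t}_\Sw\supsetneq P$ and they are not maximal. For a type (1) set $\Sw_B$ with $B\subsetneq[r_n]$, pick $j\in[r_n]\setminus B$ and set $\Tw=\Sw_{B\cup\{j\}}$; I plan to prove the strict containment $\P{n-1}{t}_\Tw\subsetneq\P{n-1}{t}_{\Sw_B}$. The inclusion is routine: every generator $x_a$ of $\Var{n-1}{t}_\Tw$ already lies in $\Var{n-1}{t}_{\Sw_B}$, and every generator $f_{i,a,b}$ of $\It{n-1}{t}_\Tw$ is either in $\It{n-1}{t}_{\Sw_B}$ (when it connects $a,b$ in a slice $\Sw_{\{k\}}$ with $k\in B$) or, when it connects them in $\Sw_{\{j\}}$, has all four of its variables with $n$-th coordinate $j\notin B$ (since $i\leq n-1$ preserves the $n$-th coordinate), and therefore lies in $\Var{n-1}{t}_{\Sw_B}$. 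The main obstacle is strictness, for which I would take $x_{(1,\ldots,1,j)}$ as the witness: it belongs to $\P{n-1}{t}_{\Sw_B}$ because $(1,\ldots,1,j)\notin\Sw_B$, but not to $\P{n-1}{t}_\Tw$, since $\P{n-1}{t}_\Tw$ is a homogeneous ideal whose degree-one part equals $\Var{n-1}{t}_\Tw$ (because $\It{n-1}{t}_\Tw$ is generated in degree two by Corollary \ref{thmGroebner}) and $(1,\ldots,1,j)\in\Tw$. This classifies the maximal $(n-1,t)$-switchable sets as exactly $\N$ and $\Sw^*$, yielding the two minimal primes in the statement.
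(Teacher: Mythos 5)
Your overall strategy matches the paper's: reduce via Theorem \ref{thmMinimalPrimes} to classifying maximal $(n-1,t)$-switchable sets, use Proposition \ref{propSwitchable4}, and identify $\N$ and $\Sw^*$ as the two maximal ones. Your identification of $\P{n-1}{t}_{\Sw^*}$ with $P$, and the incomparability of $P$ and $\It{n-1}{t}$, are fine (the paper uses only the direction witnessed by $f_{1,(1,\ldots,1),(r_1,\ldots,r_n)}$, whereas you also invoke Proposition \ref{propColonIt} for the other direction). The one place you deviate is in ruling out the type (1) sets $\Sw_B$ with $B\subsetneq[r_n]$: the paper simply notes $\It{n-1}{t}\subsetneq \P{n-1}{t}_{\Sw_B}$ (i.e.\ it compares $\Sw_B$ directly with $\N$; strictness is immediate since $\It{n-1}{t}$ contains no monomials and $\Var{n-1}{t}_{\Sw_B}\neq(0)$), whereas you compare $\Sw_B$ with $\Tw=\Sw_{B\cup\{j\}}$ and prove $\P{n-1}{t}_{\Tw}\subsetneq\P{n-1}{t}_{\Sw_B}$. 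Both choices make $\P{n-1}{t}_{\Sw_B}$ comparable to the $\P$-ideal of a proper superset, which is all that the definition of maximality requires, so your route is admissible.

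However, your case analysis for the containment $\It{n-1}{t}_{\Tw}\subseteq\P{n-1}{t}_{\Sw_B}$ has a gap. You tacitly assume that every generator $f_{i,a,b}$ of $\It{n-1}{t}_{\Tw}$ arises from $a,b$ lying in a common slice $\Sw_{\{k\}}$, i.e.\ that $a_n=b_n$. This is false: the generating set consists of $f_{i,a,b}$ for all $a,b$ \emph{connected in $\Tw$}, and $\Tw=\Sw_{B\cup\{j\}}$ is a single connected component whenever $|B\cup\{j\}|\ge 2$ (a path may change the last coordinate in a single step while staying in $\Tw$). So there are generators with $a_n\neq b_n$ that your dichotomy does not cover. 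The omission is harmless: if $a_n,b_n\in B$ then $a,b\in\Sw_B$, which is itself connected, so $f_{i,a,b}\in\It{n-1}{t}_{\Sw_B}$; if, say, $a_n=j$ then both $x_a$ and $x_{\sw(i,a,b)}$ have last coordinate $j\notin B$ (again using $i\le n-1$), so $f_{i,a,b}\in\Var{n-1}{t}_{\Sw_B}$. With this third case added your argument goes through. The paper sidesteps the issue entirely by making the comparison against $\N$, which keeps the check slightly shorter.
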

\begin{proof}
By Theorem \ref{thmMinimalPrimes} we have to find all the maximal $(n-1,t)$-switchable sets, 
and certainly $\N$ is one of these. 
In fact, it is the only one among those of type (1) in Proposition \ref{propSwitchable4}: 
if ${B} \subsetneq [r_3]$ then it is easy to check that $\It{n-1}{t} \subsetneq \P{n-1}{t}_{\Sw_{{B}}}$.

Now consider $(n-1,t)$-switchable sets of type (2). 
There is a maximal one with respect to inclusion, namely the set $\Sw=\big\{a \in \N |\, (a_1,\ldots,a_{n-1})\in \{ (1,\ldots, 1), (r_1,\ldots , r_{n-1}) \}\big\}$.
This is indeed a maximal $(n-1,t)$-switchable set, because the only $(n-1,t)$-switchable set containing it is $\N$ and we have
$\P{n-1}{t}_\N	\not\subseteq \P{n-1}{t}_\Sw$ because $f_{1,(1,\ldots,1),(r_1,\ldots, r_n)}\notin \P{n-1}{t}_\Sw$.
None of the other $(n-1,t)$-switchable sets of the form (2) is maximal: if $\Tw$ is one of these, then $\Tw \subsetneq \Sw$ and we can easily verify  that $\P{n-1}{t}_\Sw	\subsetneq \P{n-1}{t}_\Tw$.
Therefore the two ideals $\It{n-1}{t}$ and $P=\P{n-1}{t}_\Sw$ provide the list of minimal primes of $\I{n-1}{t}$.
\end{proof}

\begin{prop}\label{propSwitchable3}
Assume that $n=3$.
The $(2,t)$-switchable sets consists of two classes: 
\begin{enumerate}

\item
all the subsets of the form
$
\Sw_{{B}},
$
with $B \subseteq [r_3]$;

\item all the subsets $\Sw\subseteq \N$ satisfying the following property:
for all $l \in [r_3]$ there is at most one $a\in \Sw$ such that $a_3=l$, 
and either $(a_1,a_2)=(1,1)$ or $(a_1,a_2)=(r_1,r_2)$.
\end{enumerate}
\end{prop}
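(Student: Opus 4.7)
My plan is to mirror the structure of the proof of Proposition \ref{propSwitchable4}, with the extra subtlety that for $n=3$ the condition $\di(a,b)=2$ can be achieved between indices with extremal first two coordinates, which is what forces the refined statement of type (2).

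First I would check that both families of sets are $(2,t)$-switchable. For $\Sw_{B}=[r_1]\times[r_2]\times B$, the $2$-equivalence preserves the third coordinate, and for $a,b\in\Sw_{B}$ with $\di(a,b)=2$ and $i\in[t]$, the switch $\sw(i,a,b)$ keeps the third coordinate in $\{a_3,b_3\}\subseteq B$. For type (2), each element is the unique index in its $2$-equivalence class (its first two coordinates being extremal), and no two distinct elements have Hamming distance $2$: they have distance $1$ when the first-two-coordinate pairs coincide, and distance $3$ otherwise (since $(1,1)$ and $(r_1,r_2)$ differ in both coordinates when $r_1,r_2\geq 2$), so property (1) is vacuous.

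For the converse, let $\Sw$ be a $(2,t)$-switchable set, and split into two cases. If $\Sw$ contains some $b$ with $(b_1,b_2)\notin\{(1,1),(r_1,r_2)\}$, Lemma \ref{lemVarsGoodSets} yields $[r_1]\times[r_2]\times\{b_3\}\subseteq\Sw$; I then argue that for every $a\in\Sw$ the entire slice $[r_1]\times[r_2]\times\{a_3\}$ is in $\Sw$, so $\Sw=\Sw_{B}$ with $B=\{a_3:a\in\Sw\}$. If $(a_1,a_2)\notin\{(1,1),(r_1,r_2)\}$ this is another direct application of the Lemma. Otherwise, say $(a_1,a_2)=(1,1)$ and $a_3=m\ne b_3$: use the auxiliary element $e=(2,1,b_3)\in\Sw$, so that $\di(a,e)=2$ and $\sw(1,a,e)=(2,1,m)\in\Sw$ has non-extremal first two coordinates (using $r_2\geq 2$), and the Lemma finishes. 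The case $(a_1,a_2)=(r_1,r_2)$ is symmetric, with $e=(r_1-1,r_2,b_3)$ and $r_1-1\neq r_1$, $(r_1-1,r_2)\neq(1,1)$ (again by $r_2\geq 2$).

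In the remaining case every $a\in\Sw$ satisfies $(a_1,a_2)\in\{(1,1),(r_1,r_2)\}$, and I would show no single value $l\in[r_3]$ supports both $(1,1,l)$ and $(r_1,r_2,l)$: these two indices are at distance $2$, so property (1) with $i=1\in[t]$ would force $(r_1,1,l)\in\Sw$, whose first two coordinates are neither $(1,1)$ nor $(r_1,r_2)$, contradicting the case hypothesis. This yields exactly type (2). The hard part, and the reason the $n=3$ statement differs from $n\geq 4$, is precisely this last point: two indices with extremal first two coordinates and equal last coordinate lie at distance $2$ only when $n-1=2$, which is why the uniqueness clause in (2) becomes necessary and why the list of $(2,t)$-switchable sets is genuinely richer than its $n\geq 4$ analogue.
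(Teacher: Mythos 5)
Your proposal is correct and follows essentially the same route as the paper: verify both families are $(2,t)$-switchable, then split on whether $\Sw$ contains an index with non-extremal first two coordinates (invoking Lemma \ref{lemVarsGoodSets} as in Proposition \ref{propSwitchable4}), and finally use the distance-$2$ switch $(1,1,l),(r_1,r_2,l)\mapsto (r_1,1,l)$ to force the uniqueness clause in type (2). The only difference is that you spell out the $n\geq 4$ argument in full where the paper simply cites it.
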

\begin{proof}
The two types of sets of the statement are $(2,t)$-switchable.
The first type is the same as in Proposition \ref{propSwitchable4}.
For the second type,  we have $\di(a,b)\ne 2$  for all $a,b\in \Sw$ and  $\Sw$ is closed under $2$-equivalence.

Let us prove that these are the only ones.
If $\Sw$ is a $(2,t)$-switchable set and   contains an index $b$ such that $(b_1,b_2)\ne (1,1),(r_1,r_2)$, then
with the very same proof as in the case $n\geq 4$ we can conclude that $\Sw$ is of type (1).
Assume now that for every $a\in \Sw$ we have either $(a_1,a_2)=(1,1)$ or $(a_1,a_2)=(r_1,r_2)$.
If there exists  $l\in [r_3]$ such that $a=(1,1,l)\in \Sw$ and $b=(r_1,r_2,l)\in \Sw$, then $\di(a,b)=2$ but $\sw(1,a,b)=(r_1,1,l)\notin \Sw$, contradiction.
Thus $\Sw$ is  of type (2).
\end{proof}

We have seen that if $s=n\geq 3$ or $s=n-1$ and $n \geq 4$ then there are exactly two minimal primes.
On the other hand, if $n=3$ and $s=2$ then there are more minimal primes.

\begin{thm}\label{thmMinimalPrimes3}
If $n=3$, the ideal $\I{2}{t}$ has exactly the following $2^{r_3}-1$ minimal primes: 
the ideal $\It{2}{t}$ and
the monomial ideals  
$
\P{2}{t}_{\Sw^\varphi}
$
with
$$
\Sw^{\varphi}=\big\{a\in \N :\, (a_1,a_2)=\varphi(a_3) \big\}
 $$
 as $\varphi$ varies over the non-constant functions $\varphi : [r_3]\rightarrow \big\{(1,1),(r_1,r_2)\big\}$.
\end{thm}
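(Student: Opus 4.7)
The plan is to apply Theorem~\ref{thmMinimalPrimes}, which identifies the minimal primes of $\I{2}{t}$ with the ideals $\P{2}{t}_\Sw$ as $\Sw$ ranges over the maximal $(2,t)$-switchable subsets of $\N$. Proposition~\ref{propSwitchable3} partitions such $\Sw$ into two classes, and I will locate the maximal ones in each class.

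For the type~(1) sets $\Sw_B=[r_1]\times[r_2]\times B$, the full set $\N=\Sw_{[r_3]}$ is trivially maximal and yields the minimal prime $\It{2}{t}$. If $B\subsetneq[r_3]$, I would show $\It{2}{t}\subseteq \P{2}{t}_{\Sw_B}$ by checking each generator $f_{i,a,b}$ (with $i\in[t]\subseteq[2]$ by Remark~\ref{remTminusOne}): since $i\ne 3$, the four involved indices split into two pairs of common third coordinate, so whenever $a_3\notin B$ or $b_3\notin B$ both monomials lie in $\Var{2}{t}_{\Sw_B}$, while if $a_3,b_3\in B$ all four indices lie in $\Sw_B$, which is a single connectedness class, placing $f_{i,a,b}$ in $\It{2}{t}_{\Sw_B}$. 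Hence no proper $\Sw_B$ is maximal.

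For type~(2) sets, the connectedness relation partitions $\Sw'$ by the $(a_1,a_2)$-value (distance-$1$ steps inside a type~(2) set can only alter the third coordinate), and for any two $\Sw'$-connected indices $a,b$ one has $f_{i,a,b}=0$; hence $\It{2}{t}_{\Sw'}=0$ and $\P{2}{t}_{\Sw'}=\Var{2}{t}_{\Sw'}$. Every such $\Sw'$ is contained in some $\Sw^\varphi$ for a total function $\varphi:[r_3]\to\{(1,1),(r_1,r_2)\}$, and if $\Sw'\subsetneq\Sw^\varphi$ then $\Var{2}{t}_{\Sw'}\supsetneq\Var{2}{t}_{\Sw^\varphi}$, making $\Sw'$ non-maximal. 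Thus only total functions $\varphi$ need be tested, and by Proposition~\ref{propSwitchable3} the only $(2,t)$-switchable set properly containing such $\Sw^\varphi$ is $\N$, so maximality of $\Sw^\varphi$ reduces to incomparability of $\P{2}{t}_{\Sw^\varphi}$ and $\It{2}{t}$.

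One direction of incomparability is immediate: $\P{2}{t}_{\Sw^\varphi}$ contains variables, whereas $\It{2}{t}$ does not. The main obstacle is the reverse inclusion, which discriminates constant from non-constant $\varphi$. For constant $\varphi$ (say $\varphi\equiv(1,1)$), a case analysis on $(a_1,a_2),(b_1,b_2)$ for each generator $f_{i,a,b}$ shows that either $f_{i,a,b}=0$ or every involved variable with first-two-coordinates $(1,1)$ is paired with a variable whose first-two-coordinates are $\ne(1,1)$; in every nonzero case both monomials lie in $\P{2}{t}_{\Sw^\varphi}$, forcing $\It{2}{t}\subseteq \P{2}{t}_{\Sw^\varphi}$ and ruining maximality. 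For non-constant $\varphi$, pick $l_1\ne l_2$ with $\varphi(l_1)=(1,1)$ and $\varphi(l_2)=(r_1,r_2)$; then the slice minor $f_{1,(1,1,l_1),(r_1,r_2,l_2)}=x_{(1,1,l_1)}x_{(r_1,r_2,l_2)}-x_{(r_1,1,l_1)}x_{(1,r_2,l_2)}$ has its leading monomial outside $\P{2}{t}_{\Sw^\varphi}$ (both factors belong to $\Sw^\varphi$) while the other monomial is inside, so the minor itself is not in $\P{2}{t}_{\Sw^\varphi}$, witnessing incomparability and hence maximality. Counting the $2^{r_3}-2$ non-constant $\varphi$ together with $\N$ yields the $2^{r_3}-1$ minimal primes of the statement.
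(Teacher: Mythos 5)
Your proof is correct and follows essentially the same strategy as the paper: invoke Theorem~\ref{thmMinimalPrimes}, run through the classification of $(2,t)$-switchable sets from Proposition~\ref{propSwitchable3}, show type~(1) sets other than $\N$ are non-maximal, reduce type~(2) sets to the $\Sw^\varphi$ with exactly one index per value of $a_3$, and then distinguish constant from non-constant~$\varphi$. One remark in your favor: for the non-constant case you pick $l_1,l_2$ with $\varphi(l_1)=(1,1)$ and $\varphi(l_2)=(r_1,r_2)$ and use $f_{1,(1,1,l_1),(r_1,r_2,l_2)}$, whereas the paper always exhibits the fixed minor $f_{1,(1,1,1),(r_1,r_2,r_3)}$, which only lies outside $\P{2}{t}_{\Sw^\varphi}$ when $\varphi(1)=(1,1)$ and $\varphi(r_3)=(r_1,r_2)$; your adaptive choice covers all non-constant~$\varphi$ and thus quietly patches a small imprecision in the paper's argument.
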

\begin{proof}
Again, we have to find all the maximal $(2,t)$-switchable sets.
As in Theorem \ref{thmMinimalPrimes4}, $\N$ is the only maximal $(2,t)$-switchable set
 among those of type (1) in Proposition \ref{propSwitchable3}.
Now consider  sets of type (2), i.e., $\Sw$ such that for all $l \in [r_3]$ there is at most one $a\in \Sw$ such that $a_3=l$, 
and  either $(a_1,a_2)=(1,1)$ or $(a_1,a_2)=(r_1,r_2)$.

First of all, we may assume that $ \Sw$ is maximal with respect to inclusion among the sets of type (2) in Proposition \ref{propSwitchable3}, or equivalently that for all $l \in [r_3]$ there is \emph{exactly} one $a\in \Sw$ such that $a_3=l$.
Indeed if $\Sw\subsetneq \Tw$ are two $(2,t)$-switchable sets of the form (2), then $\P{2}{t}_\Tw \subsetneq \P{2}{t}_\Sw$,  because they are  equal to $\Var{2}{t}_\Tw$ and $\Var{2}{t}_\Sw$, respectively (there are no two connected indices with distance at least 2 in $\Sw$  or $\Tw$ and thus $\It{2}{t}_\Sw=\It{2}{t}_\Tw=\{0\})$.

Therefore $\Sw=\Sw^\varphi$ for some function $\varphi : [r_3]\rightarrow \{(1,1),(r_1,r_2)\}$.
If $\varphi$ is constant, then $S^\varphi$ is not a maximal $(2,t)$-switchable set: we claim that $\It{2}{t} \subseteq \P{2}{t}_{\Sw^\varphi}$.
Let $f_{i,a,b}$ be a nonzero generator of $\It{2}{t}$, so that $\di(a,b)>1$.
If $a\notin  \Sw^\varphi$ or $b\notin  \Sw^\varphi$ then at least one of $\sw(1,a,b),\sw(1,b,a)$ does not belong to $\Sw^\varphi $  and hence $f_{i,a,b} \in \P{s}{t}_{\Sw^\varphi}$.
Notice that since $\varphi$ is constant and $\di(a,b)>1$ we cannot have both $a,b\in \Sw^\varphi$.
The claim is thus proved.

So necessarily $\Sw=\Sw^\varphi$ for some non-constant function $\varphi : [r_3]\rightarrow \{(1,1),(r_1,r_2)\}$.
Let us see that the converse holds.
We prove that 
$\It{2}{t}_\Tw \not\subseteq \P{2}{t}_{\Sw^\varphi} $ for every $\Tw$ of type (1) containing $\Sw^\varphi$. 
It is sufficient to check this for $\Tw=\N$, because $\It{2}{t}_\N \subseteq \It{2}{t}_\Tw$ for these $\Tw$.
To this purpose consider, as usual, the element $f_{1,(1,1,1),(r_1,r_2, r_3)}$:
 it doesn't belong to $\P{2}{t}_{\Sw^\varphi} $ because $x_{(r_1,1,1)},x_{(1,r_2,r_3)}\in \P{2}{t}_{\Sw^\varphi}$ and $x_{(1,1,1)},x_{(r_1,r_2,r_3)}\notin \P{2}{t}_{\Sw^\varphi}$.

These monomial primes are in a one-to-one correspondence with the set of non-constant functions $\varphi : [r_3]\rightarrow \{(1,1),(r_1,r_2)\}$, and there are $2^{r_3}-2$ such functions; 
considering also the binomial minimal prime $\It{2}{t}$ we obtain exactly $2^{r_3}-1$ primes.
\end{proof}

\begin{rem}
We note that when $s<n$ there are embedded primes.
For example, consider the $2$-Hankel hypermatrix indexed by $\N=[2]\times [2] \times [3]$:
the maximal homogeneous ideal is an associated prime of $\I{2}{1}$.
\end{rem}

\begin{rem}\label{remLargeValueS}
We have seen in Theorems \ref{thmMinimalPrimesSN}, \ref{thmMinimalPrimes4} and \ref{thmMinimalPrimes3} that  all minimal primes over $\I{s}{t}$ other than $\It{s}{t}$ are monomial if $s\geq n-1$.
This is no longer true  for smaller values of $s$.
For instance, take the $3$-Hankel hypermatrix indexed by $ \N=[2]\times [2]\times [2]\times [2]\times [2]$, i.e.,
 with $n=5$ and $s=3$.
 The set $ \Sw = \{ a \in \N : \, a_1=a_2=a_3\}$ 
 is maximal $(3,5)$-switchable,
 $\It{3}{5}_\Sw \not \subseteq \mathrm{Var}^{\langle 3,5\rangle}_\Sw$ and therefore  $\P{3}{5}_\Sw$ is not monomial.
\end{rem}

\section{Watanabe's theorem and projective varieties}\label{secGeometry}

Given two ideals $I$ and $I'$ living in two isomorphic rings $R$ and $R'$, when
is there an isomorphism  $\Psi:R\rightarrow R'$ such that $\Psi(I)=I'$?
Invariants of  ideals,
such as the Hilbert function, 
allow us to solve this problem in one direction (when the answer is negative).
 In this section we address this question in the other direction, for ideals of the form $\It{s}{t}$.

The idea is to generalize to hypermatrices the following result proved by J. Watanabe (in the more general setting of $r\times r$ minors).
Let $H$ and $H'$ be an  $m\times n$  and an $m'\times n'$ Hankel matrix, respectively, and let $I_2(H)$, $I_2(H')$ denote the ideals generated by all the $2 \times 2$ minors:
    if $m+n=m'+n'$ then $I_2(H) \cong I_2(H')$ (cf. \cite[Theorem 1]{Wa}).
    Looking at $s$-Hankel hypermatrices now, choose two sets of  integers   $\{n,s,t,r_1,\ldots,r_n\}$ and $\{n',s',t',r'_1,\ldots,r'_{n'}\}$ with the usual limitations $s,t \in [n]$, $s',t'\in [n']$, $r_i,r'_i\geq 2$.
These  parameters determine two  hypermatrices $M$ and $M'$ indexed  in
$\mathcal{N}=[r_1]\times \cdots \times [r_{n}]$ and  $\mathcal{N}'=[r'_1]\times \cdots\times [r'_{n'}]$, respectively.
Then we have the corresponding polynomial rings $R$ and $R'$, and the ideals $\I{s}{t}$ and $\It{s}{t}$ which we denote with  $I$ and $\widetilde{I}$ for $R$ and 
$I'$ and $\widetilde{I}'$ for $R'$.
In order to address the question above, we need to have $R \cong R'$.
By Remark \ref{remDimensionR},  if $n-s=n'-s'$, $ 
\sum_{i=1}^s r_i-s = \sum_{i=1}^{s'} r'_i-s$ and $r_{n-i+1}=r'_{n'-i+1}$ for $i\in [n-s]$
then $\dim R= \dim R'$ and hence the two rings are isomorphic.
Moreover, by Theorem \ref{corHilbert}, the Hilbert functions  of $\widetilde{I}$ and $\widetilde{I}'$ are the same if one of the two following conditions holds:
either  $t > s$ and $t-s=t'-s'$, or $t\leq s$ and $t'\leq s'$.
We see that in this case $\widetilde{I}$ and $\widetilde{I}'$ are in fact isomorphic.

\begin{thm}\label{thmGeneralizationWatanabe}
Let  $\{n,s,t,r_1,\ldots,r_n\}$ and $\{n',s',t',r'_1,\ldots,r'_{n'}\}$  be two sets of parameters as above.
If the parameters satisfy the following  conditions
\begin{itemize}
\item $n-s=n'-s'$;

\item $ 
\sum_{i=1}^s r_i-s = \sum_{i=1}^{s'} r'_i-s'$;

\item $ r_{n-i+1}=r'_{n'-i+1} $ for $i \in [n-s]$;
\end{itemize}
and one of the two conditions
\begin{itemize}
\item $s<t$ and $t-s=t'-s'$;

\item $t\leq s$ and $t'\leq s'$;
\end{itemize}
then there is an isomorphism $\Psi: R \rightarrow R'$ that maps $\widetilde{I}$ to $\widetilde{I}'$.

\end{thm}
\begin{proof}
Let  $\{x_a, a \in \N\}$ be the variables of $R$ and  $\{x'_b, b \in \N'\}$ the variables of $R'$.
Notice that if $s=1$ the two hypermatrices $M$ and  $M'$ have the same sizes and the theorem doesn't really say anything. Therefore, assume $s \geq 2$.

We can define an isomorphism $\Psi: R \rightarrow R'$ assigning a bijection between the variables of the two rings.
We map a variable $x_a$ to $x'_{\psi(a)}$, where $\psi(a) \in \N'$ is an index 
such that $\|\psi(a)\|_{\langle s' \rangle}= 	\|a\sno$ and the last $n-s$ components of $ \psi(a)$ are the same as those of $a$. 
The assumptions on the parameters and the $s$-equivalence and $s'$-equivalence guarantee that this is a well-defined bijection between  the two sets of variables.

Now this isomorphism preserves the quantity and multisets of properties (1), (2a)-(2b) for a monomial $\alpha \in R$
from Theorem \ref{thmReductGeneral}.
Since Theorem \ref{thmReductGeneral} characterizes the membership for binomials and ideals $\widetilde{I}$, $\widetilde{I}'$,
we can deduce that  $\Psi$ maps binomials of $\widetilde{I}$ to binomials of $\widetilde{I}'$.
The argument works clearly in the other direction too, so that $\Psi (\widetilde{I})= \widetilde{I}'$.
\end{proof}

\begin{rem}
In general, even though the hypotheses of Theorem \ref{thmGeneralizationWatanabe} are satisfied, $I$ and $I'$ need not be isomorphic.
For example, if $\N=[3]\times [3]\times[3]$, $s=t=n=3$ then 
the  Hilbert polynomial of  $I$ is  $P(d)=9d-2$,
whereas if $\N'=[4]\times [2]\times[3]$, $s'=t'=n'=3$ the  Hilbert polynomial of  $I'$ is $P'(d)=7d$.
\end{rem}

Besides generalising Watanabe's theorem, Theorem \ref{thmGeneralizationWatanabe}  allows us to relate the ideal $\It{s}{t}$ to some classical determinantal varieties.
A good reference on the topic is \cite{Har}.

\begin{cor}\label{corCurve}
An ideal of the form $\It{n}{t}$, i.e. with $s=n$, defines a rational normal curve.
\end{cor}
\begin{proof}
Recall that equations for the rational normal curve in $\mathbb{P}^m$ are given by the $2 \times 2$ minors of a Hankel matrix of size $2 \times m$
whose entries are the $m+1$ homogeneous coordinates.
Assume that $\It{n}{t}$ is associated to the set of indices $\mathcal{N}=[r_1]\times \cdots \times [r_{n}]$.
We define another set of parameters setting $n'=s'=t'=2, r'_1=2, r'_2=\sum_{i=1}^n r_i-n$
(we have $r'_2 \geq 2$ because $r_i\geq 2$ for each $i\in [n]$).
Now the two sets of parameters satisfy the hypotheses of Theorem \ref{thmGeneralizationWatanabe}, 
and the second one defines a $2\times r'_2$ Hankel matrix.
The theorem follows with $m = \sum_{i=1}^n r_i-n$.
\end{proof}

Now we consider a generalization of the rational normal curve to higher dimensions.
Let $\sigma_1\leq \sigma_2 \leq  \cdots  \leq \sigma_l$ be a non-decreasing sequence in $\NN$ and set $m= \sum_{i=1}^l \sigma_i+l-1$.
We can find complementary $\sigma_i$-dimensional linear subspaces $\Lambda_i  \subseteq \mathbb{P}^m$ and rational normal curves $\mathcal{C}_i \subseteq \Lambda_i$ in each.
Choose biregular maps$  $ $\varphi_i : \mathcal{C}_1 \rightarrow \mathcal{C}_i$ and let 
$$
\Scroll({\sigma_1, \ldots, \sigma_l}) = \bigcup_{P\in \mathcal{C}_1} \overline{P, \varphi_2(P), \ldots,  \varphi_l(P)}
$$
where $\overline{P_1, \ldots,  P_l}$ denotes the linear subspace of $\mathbb{P}^m$  spanned by points  ${P_1, \ldots,  P_l}$.
The set 
$\Scroll({\sigma_1, \ldots, \sigma_l})$ is called a {\bf rational normal scroll of dimension $\bf l$}   and it is actually a projective variety: 
its defining ideal is generated by the minors of the following  matrix (cf.~\cite{Har})
$$ \Sigma = \left(
\begin{matrix}
Z_{1,1} & Z_{1,2}  & \cdots & Z_{1,\sigma_1} \\
Z_{1,2} & Z_{1,3}  & \cdots & Z_{1,\sigma_1+1}
\end{matrix}
\right|
\left.
\begin{matrix}
\, \cdots \, \\
 \, \cdots \,
\end{matrix}
\right|
\left.
\begin{matrix}
Z_{l,1} & Z_{l,2}  & \cdots & Z_{l,\sigma_l} \\
Z_{l,2} & Z_{l,3}  & \cdots & Z_{l,\sigma_l+1}
\end{matrix}
\right)
$$
i.e., a matrix consisting of $l$ blocks of sizes $2\times \sigma_i$, with each block a Hankel matrix.
We denote by $I_2(\Sigma)$  this ideal, which lives in the  polynomial ring 
$
S=\Bbbk\big[Z_{i,j}: i\in[l],\,1\leq j \leq \sigma_i\big].$

Consider now the class of rational normal scrolls such that all the defining integers are equal, that is to say $\Scroll(\sigma, \ldots, \sigma)$ with $\sigma\in \mathbb{N}_+$ repeated $l$ times.
We want to view the corresponding ring $S
$
as a ring of the form $R$ used so far.
We can map the variables of  the polynomial ring $R$ , with set of indices $N=[2]\times [\sigma]\times [l]$ and parameters  $n=3$, $s=2$, to those of the polynomial ring $S$ above.
The correspondence between the two sets of variables is given by 
$
\psi(x_{(a_1,a_2,a_3)})= Z_{a_1+a_2-1,a_3}.
$
This map is well-defined and bijective because of the $2$-equivalence and corresponding
identification among variables $x_a$ of $R$.
Therefore we have defined an isomorphism $\Psi:R\rightarrow S$.
\begin{cor}\label{corScroll}
An ideal of the form $\It{n-1}{t}$, i.e. with $s=n-1$,  defines a rational normal scroll if $n \geq 3$.
\end{cor}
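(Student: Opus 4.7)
My plan is to combine Theorem \ref{thmGeneralisationWatanabe} with the isomorphism $\Psi \colon R \to S$ constructed just before the corollary statement. The former reduces the general case to the $3$-fold setting, and the latter explicitly realizes the reduced ideal as the scroll ideal.

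First I would apply Theorem \ref{thmGeneralisationWatanabe} with the primed parameters $n' = 3$, $s' = 2$, $r'_1 = 2$, $r'_2 = \sum_{i=1}^{n-1} r_i - (n-1)$, $r'_3 = r_n$. The conditions $n - s = 1 = n' - s'$, the sum equality, and $r_n = r'_3$ are immediate, and $r'_2 \geq 2$ follows from $r_i \geq 2$ and $n \geq 3$. Using Remark \ref{remTminusOne} to assume $t \leq n-1$, we land in the branch of the theorem with $t' \leq s' = 2$, and obtain a ring isomorphism sending $\It{n-1}{t}$ to some $\It{2}{t'}$ in the $3$-fold ring $R'$.

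Next I would observe that in this reduced setting ($r'_1 = 2$), the ideal $\It{2}{t'}$ is the same for every $t' \in \{1,2,3\}$. By Corollary \ref{thmHilbert}, formulas (2) and (3) agree when $s = t = 2$ since the middle product is then empty, so the obvious inclusion $\It{2}{1} \subseteq \It{2}{2} = \It{2}{3}$ (the last equality by Remark \ref{remTminusOne}) must be an equality by comparison of Hilbert functions. Hence the reduced ideal coincides with $\It{2}{1}$ regardless of the value of $t'$.

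The core remaining step is the identification $\Psi(\It{2}{1}) = I_2(\Sigma)$, where $\Sigma$ is the scroll matrix with $l = r_n$ blocks each of size $2 \times \sigma$ and $\sigma = \sum_{i=1}^{n-1} r_i - (n-1)$. The most transparent route goes through Discussion \ref{discFlattening}, which writes $\It{2}{1}$ as the ideal of $2 \times 2$ minors of the flattening of the hypermatrix with respect to component $1$. Under the identification $\psi(x_{(a_1,a_2,a_3)}) = Z_{a_1+a_2-1,a_3}$, the $2$-equivalence relation $x_{(2,j,a_3)} = x_{(1,j+1,a_3)}$ forces each group of columns sharing a common third coordinate $a_3$ to acquire the Hankel shape of a scroll block, so that the flattening, after grouping columns by their $a_3$-value, is exactly $\Sigma$. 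The main subtlety is the bookkeeping needed to verify that every generalized minor $f_{1,a,b}$ rewrites, via $2$-equivalence on the four variables $x_a, x_b, x_{\sw(1,a,b)}, x_{\sw(1,b,a)}$, as a $2 \times 2$ minor of $\Sigma$ picking two columns of specific shape; here the hypothesis $r'_1 = 2$ is essential, as it forces $\{a_1,b_1\} = \{1,2\}$ in the nonzero cases and thereby pins down the four $Z$-variables to precisely those of a scroll minor.
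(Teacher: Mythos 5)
Your proposal is correct and follows essentially the same route as the paper: reduce to the $3$-fold case via Theorem \ref{thmGeneralisationWatanabe} (using Remark \ref{remTminusOne} to assume $t\leq n-1$), then identify $\It{2}{1}$ with $I_2(\Sigma)$ via the flattening description of Discussion \ref{discFlattening} and the explicit isomorphism $\Psi\colon R\to S$. The one small deviation is your Hilbert-function detour showing $\It{2}{1}=\It{2}{2}=\It{2}{3}$, which the paper avoids by simply fixing $t'=1$ when applying Theorem \ref{thmGeneralisationWatanabe}; this is a valid but unnecessary extra step.
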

\begin{proof}
We consider at first the particular case $n=3,s=2,t=1$  and $\mathcal{N}=[2]\times [\sigma]\times[l]$ for some $\sigma, l \in \NN$.
Since $t=1$, by Discussion \ref{discFlattening} the ideal $\It{2}{1}$ can be realized as the  determinantal ideal of the flattening of $M$ with respect to the first component, i.e., the following matrix:
$$\left(
\begin{matrix}
x_{(1,1,1)} & x_{(1,2,1)}  & \cdots & x_{(1,\sigma,1)} \\
x_{(2,1,1)} & x_{(2,2,1)}  & \cdots & x_{(2,\sigma,1)}
\end{matrix}
\right|
\left.
\begin{matrix}
\, \cdots \, \\
 \, \cdots \,
\end{matrix}
\right|
\left.
\begin{matrix}
x_{(1,1,l)} & x_{(1,2,l)}  & \cdots & x_{(1,\sigma,l)} \\
x_{(2,1,l)} & x_{(2,2,l)}  & \cdots & x_{(2,\sigma,l)}
\end{matrix}
\right).
$$ 
Thus,
 denoting this matrix by $M'$  and its $2 \times 2$ determinantal ideal by $I_2(M')$, 
 we have $\It{2}{1}=I_2(M')$.
But the isomormphism $\Psi:R\rightarrow S$ maps the matrix $M'$  to the matrix $\Sigma$ and therefore the  ideal $I_2(M')$ to $I_2(\Sigma)$, and the conclusion follows.

Now consider the general case. 
Notice first that $t=n$ and $t=n-1$ produce the same ideal by Remark \ref{remTminusOne}, thus we may assume  $t \leq n-1$.
We want to apply Theorem \ref{thmGeneralizationWatanabe}, and to this purpose we choose the following parameters:
$
n'=3,\, s'=2,\, t'=1,\, r'_1=2,\, r'_2=\sigma= \sum_{i=1}^{n-1}r_i-n+1,\, r'_3=l=r_n.
$
With this choice the ring $R$ is isomorphic to the polynomial ring $R'$ associated to the set of indices $\N'=[2]\times[\sigma]\times[l]$ and $s'=2$, and the isomorphism maps our ideal $\It{n-1}{t}$ to the ideal $\It{2}{1}$ of $R'$.
But this ideal defines a rational normal scroll, as seen in the first part of the proof.
\end{proof}

\section*{Acknowledgments}
This paper is an outcome of the author's master thesis written under the supervision of Irena Swanson.
We thank her for the great hospitality at Reed College and for many helpful discussions.
We are also thankful to the  referee for improving the organization of the content.

\vspace*{1cm}
\end{document}